\newtheorem{theorem}{Theorem}[section]
\newtheorem{lemma}[theorem]{Lemma}
\newtheorem{proposition}[theorem]{Proposition}
\newtheorem{corollary}[theorem]{Corollary}
\theoremstyle{definition}
\newtheorem{example}[theorem]{Example}
\theoremstyle{remark}
\numberwithin{equation}{section}
\newcommand{\bfind}[1]{{\bf #1}}
\newcommand{\n}{\par\noindent}
\newcommand{\sn}{\par\smallskip\noindent}
\newcommand{\pars}{\par\smallskip}
\newcommand{\parm}{\par\medskip}
\newcommand{\ovl}[1]{\overline{#1}}
\newcommand{\sep}{^{\rm sep}}
\newcommand{\chara}{\mbox{\rm char}}
\newcommand{\appr}{\mbox{\rm appr}}
\newcommand{\dist}{\mbox{\rm dist}}
\newcommand{\supp}{\mbox{\rm supp}}
\newcommand{\Gal}{\mbox{\rm Gal}\,}
\newcommand{\gloss}[1]{\glossary{#1}{\sl #1}}
\newcommand{\restr}{|^{ }_}
\newcommand{\subsetuneq}{\mathrel{\raisebox{.8ex}{\footnotesize%
$\displaystyle\mathop{\subset}_{\not=}$}}}
\newcommand{\cal}{\mathcal}
\newcommand{\N}{\mathbb{N}}
\newcommand{\Q}{\mathbb{Q}}
\newcommand{\F}{\mathbb{F}}
\newcommand{\bh}{{\bf h}}
\newcommand{\bhsc}{\mbox{\scriptsize\bf h}}
\begin{document}

\title[Relative approximation degree]{The relative approximation degree
in valued function fields}

\author{Franz-Viktor Kuhlmann and Izabela Vlahu}
\address{Department of Mathematics and Statistics,
   University of Saskatchewan,
   106 Wiggins Road,
   Saskatoon, Saskatchewan, Canada S7N 5E6}
\email{fvk@math.usask.ca, izabelavlahu@gmail.com}
\thanks{The first author wishes to thank Peter Roquette for his
invaluable help and support during the preparation of his doctoral
thesis, in which several of the results presented here appeared first.
He also wishes to thank F.~Delon, B.~Green, A.~Prestel and F.~Pop for
inspiring discussions and suggestions. Thanks also to Salih Durhan for
his careful reading of the manuscript.
\\ During the work on this paper, the first author  was
partially supported by a Canadian NSERC grant.}

\subjclass[2010]{Primary 12J10; Secondary 12J20.}

\date{1.\ 8.\ 2012}

\begin{abstract}
We continue the work of Kaplansky on immediate valued field extensions
and determine special properties of elements in such extensions. In
particular, we are interested in the question when an immediate valued
function field of transcendence degree 1 is henselian rational (i.e.,
generated, modulo henselization, by one element). If so, then wild
ramification can be eliminated in this valued function field. The
results presented in this paper are crucial for the first author's proof
of henselian rationality over tame fields, which in turn is used in his
work on local uniformization.
\end{abstract}

\maketitle

%
%
\section{Introduction}\label{s:introduction}
This paper continues the work of Kaplansky \cite{KAP1} in which, based
on earlier work of Ostrowski \cite{OS3}, he laid the foundations for an
understanding of immediate extensions of valued fields. Such an
understanding has turned out to be essential for many questions about
the structure of valued fields, which vary from their model theory or
applications in real algebra to the very difficult task of elimination
of wild ramification in valued function fields. The latter plays an
essential role in the quest for local uniformization, which in turn is a
local form of resolution of singularities. These problems being still
wide open in positive characteristic, any refined valuation
theoretical tools that can bring new insight are very important.

The theory developed by Kaplansky and Ostrowski is very useful for
valuations with residue fields of characteristic 0, but its real
strength (as well as its limitations) become visible when the residue
characteristic is positive.

While Kaplansky was mainly concerned with embeddings in power series
fields and the question when maximal immediate extensions are unique up
to isomorphism, the above mentioned problems have added new questions to
the spectrum. In the present paper we develop Kaplansky's tools further
in order to answer various questions about the structure of immediate
function fields. Several results of this paper are indispensable for the
paper \cite{HR} on henselian rationality, which is central in the first
author's work on elimination of wild ramification and local
uniformization (see \cite{KK2}), as well as the model theory of valued
fields (see \cite{FVK}).

By $(L|K,v)$ we denote an extension of valued fields, i.e., $L|K$ is a
field extension, $v$ is a valuation on $L$, and $K$ is endowed with the
restriction of $v$ (which will again be denoted by $v$.) An extension
$(L|K,v)$ is said to be {\bf immediate} if the canonical embeddings
$vK\hookrightarrow vL$ of the value groups and $Kv\hookrightarrow Lv$ of
the residue fields are onto. An important example for an immediate
algebraic extension of a valued field $(K,v)$ is its henselization,
denoted by $(K,v)^h$ or just $K^h$, which is a minimal extension in
which Hensel's Lemma holds. An immediate function field $(F|K,v)$ of
transcendence degree 1 will be called {\bf henselian rational}
if there exists an element $x\in F^h$ such that $F^h=K(x)^h$,
that is, $F^h$ is the henselization of the rational function field
$K(x)$. (This eliminates wild ramification from $(F|K,v)$.) We then call
$x$ a \bfind{henselian generator} of $F^h$.

The main theorem of \cite{HR} states that every immediate function field
$(F|K,v)$ of transcendence degree 1 over a tame field $(K,v)$ is
henselian rational. The field $(K,v)$ is called {\bf tame} if it is
henselian (i.e., $K=K^h$) and the ramification group of the extension
$K^{\sep}|K$, where $K^{\sep}$ is the separable algebraic closure of
$K$, is trivial, that is, the fixed field $K^r$ of this ramification
group is algebraically closed.

For the proof of this theorem, one first reduces the problem to
the case of valued fields of rank 1 (i.e., having archimedean
ordered value groups), and then starts with an arbitrary element
$x\in F$ transcendental over $K$; it can be chosen such that $F|K(x)$
is separable. If $x$ is not a henselian generator, then $(F^h|K(x)^h,v)$
is a proper finite immediate extension. Let us describe the further
steps of the proof in the important special case where $\chara K=p>0$.
If one replaces $(F|K,v)$ by the valued function field $(F.K^r| K^r,v)$,
which again is immediate, then the extension $((F.K^r)^h| K^r(x)^h,v)$
becomes a tower of Artin-Schreier extensions. The lowest of them is
shown to be generated by a root $y$ of a polynomial $X^p-X-f(x)$ where
$p$ is the residue characteristic and $f(x)\in K[x]$. We observe that
$f(x)= y^p-y\in K(y)$, hence if $K(x)^h= K(f(x))^h$, then $K(x)^h
\subsetuneq K(y)^h$. Replacing $x$ by $y$, we have then reduced the
degree of $F^h|K(x)^h$ by a factor of $p$. This shows that it is crucial
to determine the degree $[K(x)^h : K(f(x))^h]$ for a given $f(x)\in
K[x]$ and to choose $f(x)$ in such a way that the degree becomes 1.

In order to gain insight on the degree $[K(x)^h : K(f(x))^h]$, we study
the elements $f(x)\in K[x]$ in (not necessarily transcendental)
immediate extensions $(K(x)|K,v)$, through extending Kaplansky's
technical lemmas. After introducing approximation types and their basic
properties in Sections~\ref{sectatdef} and~\ref{sectiat}, this study is
carried out in in Sections~\ref{sectpiat} to~\ref{sectad}. In
Section~\ref{sectrad}, we define the ``relative approximation degree of
$f(x)$ in $x$'' to be the integer $h$ that appears in Kaplansky's Lemma
8. We then show in Theorem~\ref{,7} that under suitable assumptions
about the extension $(K(x)|K,v)$ and the element $f(x)$, the degree
$[K(x)^h : K(f(x))^h]$ is smaller than or equal to the relative
approximation degree of $f(x)$ in $x$.

Having proved (in \cite{HR}) that the immediate function field
$(F.K^r|K^r(x),v)$ is henselian rational, one has to pull this property
down to $(F|K,v)$. Observe that if $(F.K^r|K^r(x),v)$ is henselian
rational, then the same already holds for $(F.L|L(x),v)$ with is a
suitable finite subextension $L|K$ of $K^r|K$. Moreover, $L|K$ can be
chosen to be Galois since also $K^r|K$ is Galois (we allow Galois
extensions to be infinite). An extension of a henselian field $(K,v)$ is
called \bfind{tame} if it lies in $K^r$. Consequently, a Galois
extension is tame if and only if its ramification group is trivial. So
what we need is a pull down principle for henselian rationality through
tame extensions of the base field. This is presented in
Theorem~\ref{,pdp}. More precisely, we show in Section~\ref{+8.2} that
if $x$ is a henselian generator for $(F.L|L,v)$, where $(L|K,v)$ is a
finite tame Galois extension, then for a suitable element $d\in L$, the
trace $\mbox{\rm Tr} (d\cdot x)$ is a henselian generator for $(F|K,v)$.
We use a valuation theoretical characterization of the Galois groups of
tame Galois extensions that is developed in Section~\ref{sectvigp}.

Once a henselian generator $x\in F^h$ is found, the question
arises whether $x$ can already be chosen in $F$. We show in
Theorem~\ref{hr-down} that this can be done. In fact, there is some
$\gamma\in vK$ such that $K(x)^h=K(y)^h$ for every $y\in F$ with
$v(x-y)\geq\gamma$. This result is crucial for
the proof given in \cite{KK2} that local uniformization can always be
achieved after a finite Galois extension of the function field. In order
to prove Theorem~\ref{hr-down}, we generalize the relative approximation
degree to other elements $y\in K(x)^h$ in place of $f(x)$ in
Section~\ref{,ad2}. We then prove the corresponding generalization
of Theorem~\ref{,7}: Theorem~\ref{,6} states that under suitable
assumptions, we again have that the degree $[K(x)^h:K(y)^h]$ is smaller
than or equal to the relative approximation degree of $y$ in $x$.

Theorem~\ref{hr-down} can be seen as a special case of a
``dehenselization'' procedure (analogous to the ``decompletion'' used by
M.~Temkin in \cite{Tem}). If for a given valued function field $(F|K,v)$
there is a finite extension $F'$ of $F$ within its henselization such
that $(F'|K,v)$ admits local uniformization, one would like to deduce
that also $(F|K,v)$ admits local uniformization. This can be done if
Theorem~\ref{hr-down} can be generalized in a suitable way to the case
of non-immediate valued function fields. This problem will be
investigated in a subsequent paper.

\pars
Our investigation of the properties of elements in immediate extensions
is facilitated by the introduction of the notion of ``approximation type'',
which we use in place of Kaplansky's ``pseudo-convergent sequences''
(also called ``pseudo-Cauchy sequences'' or ``Ostrowski nets'' in the
literature). This new notion makes computations and the formulation of
results easier. For instance, to every element $x$ in an immediate
extension $(L|K,v)$, we associate the unique approximation type of $x$
over $K$, while there are many pseudo-convergent sequences in $K$ that
have $x$ as a pseudo-limit, and in addition one needs to require
maximality of such sequences (for $x\notin K$ one asks that they do not
have a pseudo limit in $K$). Furthermore, the definition of
approximation types is not restricted to immediate extensions only. In
fact, approximation types can be further enhanced to a tool for
describing properties of elements in non-immediate extensions. In
Section~\ref{sectKth}, we take the occasion to show how Kaplansky's
fundamental Theorems 2 and 3 can be proved by using approximation types
in place of pseudo-convergent sequences.


This paper is based on results that appeared in the first author's
doctoral thesis (cf.~\cite{FVKth}) and presents updated, improved and
extended versions of them, with simplified proofs. The preparation of
these results for publication is part of the second author's Masters
thesis.

%
%
%
\section{Some preliminaries}                \label{sectprel}
%
%
For basic facts from valuation theory, see \cite{End}, \cite{[Eng--P]},
\cite{RIB1}, \cite{W}, \cite{ZS}.

Take a valued field $(K,v)$. We denote its value group by $vK$, its
residue field by $Kv$, and its valuation ring by ${\cal O}_K$. For $a\in
K$, we write $va$ for its value and $av$ for its residue.

By $\tilde{K}$ we will denote the algebraic closure of $K$. For each
extension of $v$ to $\tilde{K}$, we have that $\tilde{K}v=
\widetilde{Kv}$, and $v\tilde{K}$ is the divisible hull of $vK$, which
we denote by $\widetilde{vK}$.

Note that the extension $(L|K,v)$ is immediate if and only if for all
$b\in L$ there is $c\in K$ such that $v(b-c)>vb$ (as is implicitly shown
in the proof of Lemma~\ref{imme} below). This property can be used to
define immediate extensions of other valued structures, such as valued
abelian groups and valued vector spaces.

An algebraic extension $(L|K,v)$ of henselian fields is called
\bfind{defectless} if every finite subextension $E|K$ satisfies the
fundamental equality $[E:K]={\rm e} \cdot {\rm f}$, where e$\,=(vE:vK)$
is the ramification index and f$\,= [Ev:Kv]$ is the inertia degree. In
this case, $(E|K,v)$ admits a \bfind{standard valuation basis}, which we
construct as follows: we take $a_1,\ldots,a_{\rm e}\in E$ such that
$va_1+vK, \ldots, va_{\rm e}+vK$ are the cosets of $vK$ in $vE$, and
$b_1,\ldots,b_{\rm f}\in E$ such that $b_1v,\ldots,b_{\rm f}v$ are a
basis of $Ev|Kv$. Then $a_ib_j\,$, $1\leq i\leq\,$e, $1\leq j\leq\,$f,
is a basis of $E|K$, and it has the following property: for all choices
of $c_{ij}\in K$,
\[
v\sum_{i,j}c_{ij}a_ib_j\>=\>\min_{i,j} vc_{ij}a_ib_j
\>=\>\min_{i,j} vc_{ij}a_i \;.
\]
Note that we can always choose $a_1=b_1=1$ so that $a_1b_1=1$.

All tame extensions of henselian fields are defectless, see \cite{FVK}.
The following facts are well known and easy to prove:

\begin{lemma}                               \label{apprdle}
Take a defectless extension $(L|K,v)$ of henselian fields and $a\in L$.
Then the set $\{v(a-c)\mid c\in K\}$ has a maximum. More precisely, if
we choose a standard valuation basis for $E=K(a)$ as above and write
\[
a\>=\>\sum_{i,j}c_{ij}a_ib_j\;,
\]
then $v(a-c_{1,1})$ is the maximum of $\{v(a-c)\mid c\in K\}$.
\end{lemma}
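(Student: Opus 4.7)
The plan is to exploit directly the standard valuation basis of $E = K(a)$ that has just been constructed in the paragraph preceding the lemma, choosing $a_1 = b_1 = 1$ as allowed there. With this normalization, the coefficient $c_{1,1}$ in the expansion $a = \sum_{i,j} c_{ij} a_i b_j$ is the only one that plays the role of an element of $K$ in a purely ``constant'' basis vector.

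First, I would fix any $c \in K$ and rewrite
\[
a - c \>=\> (c_{1,1} - c)\,a_1 b_1 \,+\, \sum_{(i,j)\neq (1,1)} c_{ij} a_i b_j,
\]
which is the expansion of $a - c$ in the same basis of $E|K$, the only affected coefficient being the one at position $(1,1)$. Then I would invoke the defining property of the standard valuation basis (the two displayed equalities in the excerpt) to conclude
\[
v(a - c) \>=\> \min\bigl\{\,v(c_{1,1} - c)\,,\; \min_{(i,j)\neq (1,1)} v(c_{ij} a_i b_j)\bigr\},
\]
since $v(a_1 b_1) = 0$. Denoting $\mu := \min_{(i,j)\neq (1,1)} v(c_{ij} a_i b_j)$, the same formula applied to the particular choice $c = c_{1,1}$ gives $v(a - c_{1,1}) = \mu$.

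Putting these together, for every $c \in K$ we have $v(a - c) = \min\{v(c_{1,1} - c),\mu\} \leq \mu = v(a - c_{1,1})$, which shows that the maximum of $\{v(a - c) \mid c \in K\}$ is attained at $c = c_{1,1}$ and equals $v(a - c_{1,1})$.

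There is essentially no obstacle here: the whole argument is a one-line application of the standard-valuation-basis formula to the vector $a - c$, relying on the fact, already recorded in the excerpt, that tame (and more generally defectless) extensions of henselian fields admit such a basis normalized so that $a_1 b_1 = 1$. The only point to be mildly careful about is to ensure the normalization $a_1 = b_1 = 1$ (so that the coefficient at position $(1,1)$ lives in $K$ in the strict sense required to form $c_{1,1} - c$), which is precisely what the sentence preceding the lemma permits.
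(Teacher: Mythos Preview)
Your proof is correct and is precisely the standard argument the paper has in mind: the lemma is stated there as ``well known and easy to prove'' and is not given an explicit proof, but the intended one-line computation with the standard valuation basis (normalized so that $a_1b_1=1$) is exactly what you wrote.
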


We will also need the following tool (cf.\ \cite[Lemma 2.5]{Kasd}):

\begin{lemma}                               \label{idllind}
Take a henselian field $(K,v)$, a valued field extension $(K'|K,v)$, an
immediate subextension $(F|K,v)$, and a defectless algebraic
subextension $(L|K,v)$. Then $F|K$ and $L|K$ are linearly disjoint,
$(F.L|F,v)$ is defectless, and $(F.L|L,v)$ is immediate.
\end{lemma}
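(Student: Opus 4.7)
The plan is to reduce everything to the finite-dimensional case and then apply the fundamental inequality. The key observation driving the whole argument is that immediacy of $F|K$ means $vF=vK$ and $Fv=Kv$, so base-changing a finite subextension of $L|K$ to $F$ cannot consume any ramification or residue degree.

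First I would fix a finite subextension $E|K$ of $L|K$ and set $n:=[E:K]$, $e:=(vE:vK)$, $f:=[Ev:Kv]$. Defectlessness of $L|K$ gives $n=ef$. Since $vE\subseteq v(F.E)$, $Ev\subseteq (F.E)v$, and $vF=vK$, $Fv=Kv$, we have
\[
(v(F.E):vF)\;\geq\;e,\qquad [(F.E)v:Fv]\;\geq\;f.
\]
The fundamental inequality for $F.E|F$ then yields
\[
n\;\geq\;[F.E:F]\;\geq\;(v(F.E):vF)\cdot[(F.E)v:Fv]\;\geq\;ef\;=\;n,
\]
so all inequalities are equalities. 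In particular $[F.E:F]=n=[E:K]$, $v(F.E)=vE$, and $(F.E)v=Ev$.

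From this one harvests all three conclusions. The equality $[F.E:F]=[E:K]$ for every finite subextension $E|K$ of $L|K$ is exactly linear disjointness of $F$ and $L$ over $K$. For immediacy of $(F.L|L,v)$, any $a\in F.L$ lies in some $F.E$ with $E|K$ finite, so $va\in v(F.E)=vE\subseteq vL$, and when $va\geq 0$, $av\in(F.E)v=Ev\subseteq Lv$; hence $v(F.L)=vL$ and $(F.L)v=Lv$. For defectlessness of $(F.L|F,v)$, any finite subextension $F'|F$ sits inside some $F.E|F$; since $F.E|F$ satisfies the fundamental equality, multiplicativity of the degree, of the ramification index, and of the residue degree across the intermediate field $F'$, applied together with the fundamental inequality for each of the two pieces $F.E|F'$ and $F'|F$, forces the fundamental equality to hold on $F'|F$ as well.

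The main point to watch is conceptual rather than computational: one must avoid invoking defectlessness of $L|K$ globally (it is only an assumption about finite subextensions) and, conversely, one must not assume $F$ or $F.L$ are henselian beyond what the fundamental inequality requires, which is nothing — the inequality $[E:K]\geq e\cdot f$ is available for any finite extension of valued fields. Everything else is bookkeeping; once the finite-level equalities $[F.E:F]=[E:K]$, $v(F.E)=vE$, $(F.E)v=Ev$ are in hand, the three conclusions follow by passing to the union over finite subextensions $E|K$ of $L|K$.
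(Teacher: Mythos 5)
The paper does not give its own proof of this lemma; it cites \cite[Lemma 2.5]{Kasd}. So there is no in-paper argument to compare against, and I can only judge your proof on its merits. It is correct and is the natural argument: fix a finite subextension $E|K$ of $L|K$ with $[E:K]=e f$, use $vF=vK$, $Fv=Kv$ to get $(v(F.E):vF)\ge e$ and $[(F.E)v:Fv]\ge f$, and squeeze via
\[
[E:K]\ \ge\ [F.E:F]\ \ge\ (v(F.E):vF)\cdot[(F.E)v:Fv]\ \ge\ e f\ =\ [E:K],
\]
forcing equalities everywhere. From $[F.E:F]=[E:K]$ for all finite $E$ you get linear disjointness of $F$ and $L$ over $K$; from $v(F.E)=vE$ and $(F.E)v=Ev$ plus the observation that every element of $F.L$ already lies in some $F.E$, you get immediacy of $(F.L|L,v)$; and the tower/multiplicativity argument using the fundamental inequality on both pieces of $F\subseteq F'\subseteq F.E$ gives defectlessness of $(F.L|F,v)$. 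You are also right that the middle inequality needs no henselianity, since the fundamental inequality $[M:N]\geq\sum_i e_i f_i$ bounds each individual summand $e_j f_j$.

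One point worth stating explicitly, which you use implicitly: the same squeeze forces $v$ to extend \emph{uniquely} from $F$ to $F.E$. Indeed, if there were a second extension, the full sum $\sum_i e_i f_i$ in the fundamental inequality would strictly exceed the single term $e_1 f_1 = [F.E:F]$ that you have matched with the degree, which is impossible. This uniqueness is precisely what makes the phrase ``$(F.L|F,v)$ is defectless'' meaningful in the sense of the paper's Section~\ref{sectprel} definition even though $F$ is not assumed henselian in the statement (in the paper's applications $F$ is of the form $F^h$ and hence henselian, so this is only a terminological wrinkle, but recording the uniqueness closes the gap cleanly).
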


%
%
\section{Approximation types and distances}  \label{sectatdef}
We will now introduce approximation types, which constitute a suitable
structure for dealing with immediate extensions of valued fields.

We define $B_\alpha (c,K) = \{a\in K \mid v(a-c)\geq\alpha\}$ to be the
``closed'' ultrametric ball in $(K,v)$ of radius $\alpha\in vK\infty:=vK
\cup\{\infty\}$ centered at $c\in K$. An \bfind{approximation type}
{\bf over} $(K,v)$ is a full nest of closed balls in $(K,v)$, that is,
a collection
\[
{\bf A}\>=\> \{B_\alpha (c_\alpha , K) \mid \alpha\in S\}
\]
with $S$ an initial segment of $vK\infty$, $c_\alpha\in K$, and the
balls $B_\alpha (c_\alpha , K)$ linearly ordered by inclusion. We write
\glossary{${\bf A}_\alpha$} ${\bf A}_\alpha=B_\alpha (c_\alpha , K)$ for
$\alpha\in S$, and ${\bf A}_\alpha=\emptyset$ otherwise. We call $S$ the
\bfind{support} of ${\bf A}$ and denote it by $\supp{\bf A}$.

Note that if $\beta<\alpha\in\supp{\bf A}$, then ${\bf A}_\beta=B_\beta
(c_\beta,K)= B_\beta (c_\alpha,K)$, i.e., ${\bf A}_\beta$ is uniquely
determined. Hence, {\bf A} is uniquely determined by the balls
${\bf A}_\alpha$ where $\alpha$ runs through an arbitrary cofinal
sequence in $\supp{\bf A}$.

Take any extension $(L|K,v)$ and $x\in L$. For all
$\alpha\in vK\infty$, we set
\begin{equation}                            \label{atalpha}
\appr(x,K)_{\alpha} \;:=\; \{c\in K\mid v(x-c)\geq\alpha\}
\>=\>B_\alpha(x,L)\cap K\;.
\end{equation}
It is easy to check that $\appr(x,K)_{\alpha}$ is empty or a closed ball of
radius $\alpha$. If $\appr(x,K)_{\alpha}\ne\emptyset$ and
$\beta<\alpha$, then also $\appr(x,K)_{\beta}\ne\emptyset$. This shows
that the set
\[
\{\alpha \in vK\infty\mid\appr(x,K)_{\alpha}\ne\emptyset\}
\]
is an initial segment of $vK\infty$ and therefore,
\begin{equation}
\appr(x,K)\;:=\;\{\appr(x,K)_{\alpha}\mid \alpha\in vK\infty
\mbox{ and } \appr(x,K)_{\alpha}\ne\emptyset\}
\end{equation}
is an approximation type over $(K,v)$. We call $\appr(x,K)$ the
\bfind{approximation type of $x$ over $(K,v)$}.

As the support $S$ of $\appr(x,K)$ is an initial segment of $vK\infty$,
$S\cap vK=S\setminus\{\infty\}$ is an initial segment of $vK$ and thus
induces a cut in $vK$ with lower cut set $S\setminus\{\infty\}$. Now
this cut induces a cut in the divisible hull $\widetilde{vK}$ of $vK$,
where the lower cut set is the smallest initial segment of
$\widetilde{vK}$ containing $S\setminus\{\infty\}$. We call this
cut the \bfind{distance of $x$ from $(K,v)$} and denote it by
\[
                        \dist(x,K)\;.
\]
We write $\dist(x,K)=\infty$ if the lower cut set is $\widetilde{vK}$,
and $\dist(x,K)< \infty$ otherwise. Note that $\dist(x,K)=\infty$ if and
only if $S$ contains $vK$, which holds if and only if $x$ lies in the
completion of $(K,v)$.

For a subset $A\subset K$ we define $\dist_K(x,A)$, the distance of
$x$ from $A$ over $K$, to be the cut in $\widetilde{vK}$ having as lower
cut set the smallest initial segment in $\widetilde{vK}$ containing the
set $\{v(x-c)\mid c\in A\}\cap vK$.

Note that if $(L|K,v)$ is an algebraic extension of valued fields, then
the divisible hull of $vK$ coincides with the divisible hull of $vL$ and
so for an element $x$ in an extension of $K$, we have that $\dist(x,K)$
and $\dist(x,L)$ are both cuts in the same group. This allows us to
compare these distances by set inclusion of the lower cut sets. Another
reason to take the distance in the divisible hull is that the
classification of Artin--Schreier defect extensions through
distances presented in \cite{Kasd} does not work if they are taken in
ordered abelian groups with archimedean components which are not dense;
this situation does not appear in divisible groups.

If $n$ is a natural number and the lower cut set of $\dist(x,K)$ is $D$,
then
\[
n\cdot\dist(x,K)
\]
will denote the cut with lower cut set $nD:=\{n\gamma\mid\gamma\in D\}$;
note that $nD$ is again an initial segment of $\widetilde{vK}$ because
of divisibility.

If $C$ and $C'$ are two cuts in a linearly ordered set $T$ defined by
their lower cut sets $D$ and $D'$, respectively, then $C=C'$ if $D=D'$,
and we write $C<C'$ if $D\subsetuneq D'$. For an element $\alpha\in T$
we write $\alpha>C$ if $\alpha>\beta$ for all $\beta\in D$, and
$\alpha\geq C$ if $\alpha\geq \beta$ for all $\beta\in D$; note that if
$D$ has no last element, then $\alpha>C\Leftrightarrow \alpha\geq C$. We
write $\alpha\leq C$ if $\alpha\in D$, and $\alpha<C$ if $\alpha\in D$
but is not the last element of $D$.

\begin{lemma}                                       \label{v>Lambda}
Take an extension $(L|K,v)$ of valued fields, and $x,x'\in L$.
\sn
a) \ For every $\alpha$ in the support of $\appr(x,K)$,
$\appr(x,K)_\alpha=\appr(x',K)_\alpha$ holds if and only if
$v(x-x')\geq\alpha$.\sn
b) \ Further,
\begin{eqnarray}
&&\appr(x,K)=\appr(x',K) \>\Longrightarrow\> v(x-x')\geq\dist(x,K)
=\dist(x',K)\>,                              \label{v>Leq1} \\
&&v(x-x')\geq\max\{\dist(x,K),\dist(x',K)\}
\>\Longrightarrow\> \appr(x,K)=\appr(x',K)\>. \label{v>Leq2}
\end{eqnarray}
\end{lemma}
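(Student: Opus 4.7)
The plan is to reduce both parts to a careful application of the ultrametric triangle inequality, and to keep track of the difference between ``values'' (elements of $\widetilde{vK}$) and ``distances'' (cuts in $\widetilde{vK}$) using the comparison conventions $\alpha\leq C$ etc.\ introduced just above the lemma.

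For part a), I would fix $\alpha\in\supp\appr(x,K)$ and pick $c_0\in\appr(x,K)_\alpha$, which exists by assumption, so $v(x-c_0)\geq\alpha$. For the forward direction, equality $\appr(x,K)_\alpha=\appr(x',K)_\alpha$ forces $c_0\in\appr(x',K)_\alpha$, and then the ultrametric inequality applied to $x-x'=(x-c_0)-(x'-c_0)$ gives $v(x-x')\geq\alpha$. For the converse, assume $v(x-x')\geq\alpha$; then for every $c\in K$ the identity $x'-c=(x'-x)+(x-c)$ together with ultrametric inequality shows $v(x-c)\geq\alpha\Longleftrightarrow v(x'-c)\geq\alpha$, which is precisely $\appr(x,K)_\alpha=\appr(x',K)_\alpha$.

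For implication~\eqref{v>Leq1}, I would first note that $\appr(x,K)=\appr(x',K)$ implies the two approximation types have the same support, hence by definition yield the same cut in $\widetilde{vK}$, so $\dist(x,K)=\dist(x',K)$. Then for every $\alpha$ in this common support part~a) gives $v(x-x')\geq\alpha$; since the lower cut set defining $\dist(x,K)$ is the smallest initial segment of $\widetilde{vK}$ containing $\supp\appr(x,K)\setminus\{\infty\}$, we conclude $v(x-x')\geq\dist(x,K)$ by the conventions on cuts.

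For implication~\eqref{v>Leq2}, suppose $v(x-x')\geq\max\{\dist(x,K),\dist(x',K)\}$. To show equality of the two approximation types I would first verify equality of supports: if $\alpha\in\supp\appr(x,K)$ then $\alpha\leq\dist(x,K)\leq v(x-x')$, so part~a) (applied at this $\alpha$, once we know both types are nonempty there) and the ultrametric inequality transport any $c\in\appr(x,K)_\alpha$ to $\appr(x',K)_\alpha$, so $\alpha\in\supp\appr(x',K)$; the symmetric argument uses the hypothesis $v(x-x')\geq\dist(x',K)$. With supports equal, part~a) gives $\appr(x,K)_\alpha=\appr(x',K)_\alpha$ for every $\alpha$ in the common support, yielding equality of the approximation types. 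The only subtle point will be handling the possibility $\infty\in\supp\appr(x,K)$ (i.e.\ $x\in K$), which is harmless because then $\dist(x,K)=\infty$ and $v(x-x')=\infty$, forcing $x'=x$.
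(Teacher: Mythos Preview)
Your proposal is correct and follows essentially the same approach as the paper. The only cosmetic differences are that the paper argues part~a) via the ball description $\appr(x,K)_\alpha=B_\alpha(x,L)\cap K$ and the dichotomy that ultrametric balls of the same radius either coincide or are disjoint (rather than element-by-element), and it proves \eqref{v>Leq1} by contrapositive; the handling of the $\infty$ case in \eqref{v>Leq2} is the same in both.
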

\begin{proof}
a):\ \ Take $\alpha\in vK\infty$. If $v(x-x')\geq\alpha$, then $B_\alpha
(x,L) =B_\alpha (x',L)$, which yields that $\appr(x,K)_\alpha =B_\alpha
(x,L) \cap K=B_\alpha (x',L)\cap K =\appr(x',K)_\alpha\,$. If
$v(x-x')<\alpha$, then $B_\alpha (x,L)\cap B_\alpha (x',L)=\emptyset$,
whence $\appr(x,K) _\alpha  \cap  \appr(x',K)_\alpha=\emptyset$; for
$\appr(x,K)_\alpha \not = \emptyset$, this yields that $\appr(x,K)_
\alpha\not=\appr(x',K)_\alpha$.
\sn
b):\ \ If $\dist (x,K)\ne\dist (x',K)$, then $\appr(x,K)\ne
\appr(x',K)$. If $v(x-x')\geq\dist (x,K)$ does not hold, then there is
some $\alpha$ in the support of $\appr(x,K)$ such that $\alpha>v(x-x')$.
By part a), it follows that $\appr(x,K)_\alpha\not=\appr(x',K)_\alpha$.
This proves (\ref{v>Leq1}).

If $v(x-x')\geq\dist(x,K)$ holds, then $v(x-x')\geq\alpha$ for all
$\alpha\ne\infty$ in the support of $\appr(x,K)$. Again by part a), it
follows that $\appr(x,K)_\alpha=\appr(x',K)_\alpha$ for all $\alpha\ne
\infty$ in the support of $\appr(x,K)$. Similarly, $v(x-x')\geq\dist(x',
K)$ implies that $\appr(x,K)_\alpha=\appr(x',K)_\alpha$ for all $\alpha
\ne \infty$ in the support of $\appr(x',K)$. If none of the supports
contains $\infty$, then we obtain that $\appr(x,K)=\appr(x',K)$. If on
the other hand, at least one support contains $\infty$, then the
corresponding distance is $\infty$, whence $v(x-x')=\infty$, i.e.,
$x=x'$ and again, $\appr(x,K)=\appr(x',K)$. We have proved
(\ref{v>Leq2}).
\end{proof}

If {\bf A} is an approximation type over $(K,v)$ and there exists an
element $x$ in some valued extension field $L$ such that ${\bf A} =
\appr(x,K)$, then we say that \bfind{$x$ realizes {\bf A}} (in $(L,v)$).
If {\bf A} is realized by some $c\in K$, then {\bf A} will be called
{\bf trivial}.\index{trivial approximation type} This holds if and only
if ${\bf A}_\infty\ne \emptyset$, in which case ${\bf A}_\infty=\{c\}$.

We leave the easy proof of the following lemma to the reader.

\begin{lemma}                               \label{x:realat}
Take an approximation type {\bf A} over $(K,v)$ and an extension
$(L|K,v)$ of valued fields. The element $x\in L$ realizes {\bf A} if and
only if the following conditions hold:
\sn
\ 1) \  if $\alpha\in\supp{\bf A}$, then $v(x-c)\geq\alpha$ for some
$c\in {\bf A}_\alpha$,
\n
\ 2) \ if $\beta\notin\supp{\bf A}$, then $v(x-c)<\beta$ for all
$c\in K$.
\end{lemma}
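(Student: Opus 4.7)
The plan is to unfold the definition of ``realizes'' and match the two sides componentwise. By definition $x$ realizes $\mathbf{A}$ means $\appr(x,K) = \mathbf{A}$, which, given the definition of an approximation type as a collection of balls indexed by an initial segment, amounts to $\appr(x,K)_\alpha = \mathbf{A}_\alpha$ for every $\alpha \in vK\infty$. So I would split the proof into two implications, and within each one split into the cases $\alpha \in \supp \mathbf{A}$ and $\beta \notin \supp \mathbf{A}$.

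For the forward direction, suppose $\appr(x,K) = \mathbf{A}$. If $\alpha \in \supp \mathbf{A}$, then $\appr(x,K)_\alpha = \mathbf{A}_\alpha \ne \emptyset$, so by the defining formula (\ref{atalpha}) there is $c \in K$ with $v(x-c) \geq \alpha$, and any such $c$ automatically lies in $\mathbf{A}_\alpha$. If $\beta \notin \supp \mathbf{A}$, then $\appr(x,K)_\beta = \mathbf{A}_\beta = \emptyset$, so no $c \in K$ satisfies $v(x-c) \geq \beta$, i.e.\ $v(x-c) < \beta$ for every $c \in K$. This is immediate from the definitions.

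For the converse, assume 1) and 2) hold. I need to verify $\appr(x,K)_\alpha = \mathbf{A}_\alpha$ for every $\alpha$. Condition 2) handles the case $\beta \notin \supp \mathbf{A}$ since it says $\appr(x,K)_\beta = \emptyset$ (and $\mathbf{A}_\beta = \emptyset$ by convention). For $\alpha \in \supp \mathbf{A}$, condition 1) produces $c \in \mathbf{A}_\alpha$ with $v(x-c) \geq \alpha$; this $c$ lies in $\appr(x,K)_\alpha$, so the two nonempty closed balls $\mathbf{A}_\alpha = B_\alpha(c_\alpha,K)$ and $\appr(x,K)_\alpha = B_\alpha(x,L)\cap K$ share the point $c$ and have the same radius $\alpha$, hence coincide by the standard ultrametric fact that two closed balls of equal radius with common point are equal.

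There is no real obstacle here: the argument is a direct bookkeeping of the definition of $\appr(x,K)$ together with the ultrametric coincidence-of-balls principle, and this is precisely why the authors leave it to the reader. The only point one has to be mildly attentive to is the case $\alpha = \infty$ in $\supp \mathbf{A}$: then $\mathbf{A}_\infty = \{c\}$ for a single $c \in K$, condition 1) forces $v(x-c) = \infty$ so $x = c \in K$, and then $\appr(x,K)_\infty = \{x\} = \{c\} = \mathbf{A}_\infty$, consistent with the general argument.
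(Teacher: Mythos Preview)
Your proof is correct and is exactly the straightforward unwinding of definitions that the paper intends: since the paper explicitly leaves the proof to the reader, there is no alternative argument to compare against, and your componentwise verification (matching $\appr(x,K)_\alpha$ with $\mathbf{A}_\alpha$ via the ultrametric ball-coincidence principle) is the expected one.
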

%

\pars
For our work with approximation types, we introduce the
following notation which is particularly useful in the immediate case.
We introduce it in connection with valued fields, but its application to
ultrametric spaces and other valued structures is similar. So take
an arbitrary valued field $(K,v)$ and an approximation type {\bf A}
over $(K,v)$. Further, take a formula $\varphi$ with one free
variable. Then the sentence
\[
       \varphi(c) \mbox{ for } c\nearrow {\bf A}
\]
will denote the assertion
\[
\mbox{there is }\alpha\in vK \mbox{ such that }{\bf A}_{\alpha}\ne
\emptyset \mbox{ and } \varphi(c) \mbox{ holds for all }
c\in{\bf A}_{\alpha}\;.
\]
Note that if $\varphi_1(c)$ for $c\nearrow {\bf A}$ and $\varphi_2(c)$
for $c\nearrow {\bf A}$, then also $\varphi_1(c)\wedge \varphi_2(c)$ for
$c\nearrow {\bf A}$.

In the case of ${\bf A}=\appr(x,K)$, we will also write ``$c\nearrow
x$'' in place of ``$c\nearrow {\bf A}$''.

If $\gamma=\gamma(c)\in vK$ is a value that depends on $c\in K$ (e.g.,
the value $vf(c)$ for a polynomial $f\in K[X]$), then we will say
that \bfind{$\gamma$ increases for $c\nearrow x$} if there exists some
$\alpha \ne \infty$ in the support of $\appr(x,K)$ such that for every
choice of $c'\in \appr(x,K)_{\alpha}$ with $x\not=c'\,$,
\[
    \gamma(c)>\gamma(c') \mbox{\ \ for\ \ } c\nearrow x\;.
\]
Note that the condition $x\not=c'$ is automatically satisfied if
$\appr(x,K)$ is nontrivial.

%
%
\section{Immediate approximation types}     \label{sectiat}
An approximation type {\bf A} with support $S$ will be called
{\bf immediate} \index{immediate approximation type} if its intersection
\[
\bigcap {\bf A}\>=\>\bigcap_{\alpha\in S}{\bf A}_\alpha
\]
is empty. If {\bf A} is trivial, then $\bigcap {\bf A}={\bf A}_\infty\ne
\emptyset$; therefore, an immediate approximation type is never trivial.

\begin{lemma}                               \label{imme}
Let $(L|K,v)$ be an extension of valued fields.
\sn
a) If $x\in L$, then $\appr(x,K)$ is immediate if and only if for every
$c\in K$ there is some $c'\in K$ such that $v(x-c')>v(x-c)$, that is,
the set
\[
v(x-K) \>:=\> \{v(x-c) \mid c \in K\}
\]
has no maximal element.
\sn
b) The extension $(L|K,v)$ is immediate if and only if for every $x\in
L\setminus K$, its approximation type $\appr(x,K)$ over $(K,v)$ is
immediate.
\sn
c)
%
If $\appr(x,K)$ is immediate, then its support is equal to $v(x-K)$.
\end{lemma}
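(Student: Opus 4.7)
The plan is to prove the three parts in order, with (a) providing the bridge between the nest structure of $\appr(x,K)$ and the order-theoretic content of $v(x-K)$; parts (b) and (c) then both reduce to (a).

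For part (a), I would argue by contradiction in each direction. Let $S=\supp\appr(x,K)$. Suppose first that $v(x-K)$ has a maximum $\alpha_0=v(x-c_0)$. Then every $\alpha\in S$ must satisfy $\alpha\leq\alpha_0$, since otherwise some $c\in K$ would give $v(x-c)\geq\alpha>\alpha_0$, contradicting maximality. Consequently $c_0\in\appr(x,K)_\alpha$ for every $\alpha\in S$, so $c_0\in\bigcap\appr(x,K)$ and the type is not immediate. Conversely, assume $c_0\in\bigcap\appr(x,K)$; if there were $c'\in K$ with $v(x-c')>v(x-c_0)$, then $\alpha':=v(x-c')$ would lie in $S$ (witnessed by $c'$), and $c_0\in\appr(x,K)_{\alpha'}$ would force $v(x-c_0)\geq\alpha'$, a contradiction.

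For part (b), the first step is to derive from the defining conditions $vK=vL$ and $Kv=Lv$ the auxiliary characterization announced before the lemma: $(L|K,v)$ is immediate if and only if every $b\in L\setminus K$ admits some $c\in K$ with $v(b-c)>vb$. For the forward direction, pick $a\in K$ with $va=vb$, lift $(b/a)v\in Lv=Kv$ to some $e\in K$, and set $c:=ae$. For the converse, the ultrametric identity $vc=vb$, which follows from $v(b-c)>vb$, yields $vb\in vK$; and when $vb=0$ the estimate $v(b-c)>0$ yields $bv=cv\in Kv$. With this characterization available, part (b) is a translation via part (a): applying it to $b:=x-c_0$ for $x\in L\setminus K$ and arbitrary $c_0\in K$ produces a $c\in K$ with $v(x-(c_0+c))>v(x-c_0)$, so $v(x-K)$ has no maximum, and by (a) the approximation type is immediate. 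The converse direction is symmetric: for $b\in L\setminus K$, immediacy of $\appr(b,K)$ and (a) show that $vb=v(b-0)$ is not maximal in $v(b-K)$, providing the required $c$.

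For part (c), I invoke (a) and use that immediacy forces $x\notin K$, so $\infty\notin S$ and $v(x-K)\subseteq vK$. The set $v(x-K)$ is closed downward in $vK$: if $v(x-c)=\gamma$ and $\gamma'<\gamma$ lies in $vK$, choose $a\in K$ with $va=\gamma'$; the ultrametric law then gives $v(x-c-a)=\gamma'$. The inclusion $v(x-K)\subseteq S$ holds by definition of the support. For the reverse inclusion, given $\alpha\in S\subseteq vK$ and $c\in K$ with $v(x-c)\geq\alpha$, either $v(x-c)=\alpha$ and we are done, or $v(x-c)>\alpha$ and, choosing $a\in K$ with $va=\alpha$, we obtain $v(x-(c+a))=\alpha$, witnessing $\alpha\in v(x-K)$. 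The step requiring the most care is the auxiliary characterization inside (b), since that is the only place where the defining surjectivity conditions on value group and residue field are actually invoked; once that characterization is secured, the remaining arguments are elementary manipulations of ultrametric identities and initial segments.
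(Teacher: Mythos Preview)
There is a gap in your ``conversely'' direction of part~(a). You assume $c_0\in\bigcap\appr(x,K)$ and, supposing some $c'\in K$ satisfies $v(x-c')>v(x-c_0)$, assert that $\alpha':=v(x-c')$ lies in $S$. But $S\subseteq vK\cup\{\infty\}$ by definition, and nothing so far forces $v(x-c')$ to lie in $vK$; it may sit in $vL\setminus vK$. In fact the stronger claim your argument would establish---that $v(x-c_0)$ is the maximum of $v(x-K)$ for \emph{every} $c_0\in\bigcap\appr(x,K)$---is false. Take $K=k((t))$ with $vt=1$, $L=K(s)$ with $s^2=t$, and $x=s^{11}$; then with $c_0=t^5$ and $c'=0$ one has $v(x-c_0)=5$, $v(x-c')=\tfrac{11}{2}$, $S=\{n\in\Z:n\le 5\}$, and both $c_0$ and $c'$ lie in $\bigcap\appr(x,K)$, yet $v(x-c_0)=5$ is not the maximum of $v(x-K)$.

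The repair is straightforward and is essentially what the paper does: iterate the no-maximum hypothesis once more. Argue instead by assuming both $c_0\in\bigcap\appr(x,K)$ and that $v(x-K)$ has no maximum; from $c'$ with $v(x-c')>v(x-c_0)$ you then obtain $c''$ with $v(x-c'')>v(x-c')$, whence $v(x-c')=v(c'-c'')\in vK$, and now $\alpha'=v(x-c')\in S$ legitimately gives the contradiction. (This is also the observation needed to justify your claim $v(x-K)\subseteq vK$ at the start of part~(c), which does not follow from $x\notin K$ alone.) Parts~(b) and~(c) are otherwise correct and match the paper's arguments in content, with your explicit isolation of the auxiliary characterization being a clean way to organize~(b); the downward-closedness of $v(x-K)$ you verify in~(c) is not needed for the equality $S=v(x-K)$, but does no harm.
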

\begin{proof}
a): \ Suppose that $\appr(x,K)$ is immediate and that $c$ is an
arbitrary element of $K$. Then by definition there is some
$\alpha$ such that $c\notin \appr(x,K)_\alpha\ne\emptyset$, so
$v(x-c)<\alpha$. Choosing some $c'\in \appr(x,K)_\alpha\,$, we obtain
that $v(x-c)< \alpha\leq v(x-c')$.

Now take $x\in L\setminus K$ and suppose that for every $c\in K$ there
is $c'\in K$ such that $v(x-c')>v(x-c)$. Then there is also some $c''\in
K$ such that $v(x-c'')>v(x-c')$. By the ultrametric triangle law we
obtain that $v(c'-c)=v(x-c)< v(x-c')= v(c''-c')$. Hence $v(c'-c)\in
v(x-K)$ and $c\notin \appr(x,K)_{v(c''-c')}\ne\emptyset$. As $c\in K$
was arbitrary, this shows that $\appr(x,K)$ is immediate.

\sn
b): \ Assume that $(L|K,v)$ is immediate. Take $x\in L\setminus K$ and
an arbitrary $c\in K$. Then $v(x-c)\in vL=vK$, i.e., there is $d\in K$
such that $v(x-c)=vd$ so that $vd^{-1}(x-c)=0$. Then $d^{-1}(x-c)v\in
Lv=Kv$, i.e., there is $d'\in K$ such that $d^{-1}(x-c)v=d'v$, which
means that $v(d^{-1}(x-c)-d')>0$. This implies that $v(x-c-dd')>vd=
v(x-c)$. Setting $c'=c+dd'$, we obtain $v(x-c')>v(x-c)$. By part a) it
now follows that $\appr(x,K)$ is immediate.

For the converse, assume that for every $x\in L\setminus K$,
$\appr(x,K)$ is immediate. By the proof of a), for every $c\in K$ we
have that $v(x-c)\in vK$, so in particular, $v(x-0)\in vK$; this shows
that $vL|vK$ is trivial. It remains to show that $Lv|Kv$ is trivial.
Take any $x\in L\setminus K$ with $vx=0$. Since $\appr(x,K)$ is
immediate, there is $c'\in K$ such that $v(x-c') > v(x-0)=vx$. From this
we obtain that $xv=c'v \in Kv$. Hence $Lv|Kv$ is trivial.

\sn
c): \ If $\alpha\in vK$ is an element of the support of $\appr(x,K)$,
then $\appr(x,K)_\alpha\ne\emptyset$, and so by (\ref{atalpha}), there
is $c\in K$ such that $v(x-c)\geq\alpha$. In the case of $v(x-c)=
\alpha$, we immediately see that $\alpha\in v(x-K)$. In the case of
$v(x-c)>\alpha$, choose some $d\in K$ with $vd=\alpha$; then
$v(x-(c+d))=vd=\alpha$, which again shows that $\alpha\in v(x-K)$.

For the converse inclusion, take $c\in K$. By the proof of part a),
there is $c'\in K$ such that $v(x-c)=v(c'-c)$, which shows that
$v(x-c)\in vK$. It follows from (\ref{atalpha}) that $c\in
\appr(x,K)_{v(x-c)}\,$, so $v(x-c)$ is in the support of $\appr(x,K)$.
\end{proof}

For immediate approximation types, we can improve part b) of
Lemma~\ref{v>Lambda}, and Lemma~\ref{x:realat}.

\begin{lemma}                                \label{v>L-imm}
Take an extension $(L|K,v)$ of valued fields, and
$x,x'\in L$. If $\appr(x,K)$ is immediate, then
\begin{equation}                            \label{v>L-immeq}
\appr(x,K)=\appr(x',K) \>\Longleftrightarrow\> v(x-x')\geq\dist(x,K)\;.
\end{equation}
\end{lemma}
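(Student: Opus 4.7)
The forward implication is already contained in Lemma~\ref{v>Lambda}(b) --- it is exactly (\ref{v>Leq1}) --- so only the converse direction requires work. The plan is to strengthen the output of Lemma~\ref{v>Lambda}(b) by exploiting the fact that, since $\appr(x,K)$ is immediate, $\supp\appr(x,K)$ does not contain $\infty$ (immediate approximation types are nontrivial) and has no largest element (by Lemma~\ref{imme}(a) combined with (c)). My goal is to prove that under the hypothesis $v(x-x')\geq\dist(x,K)$ we actually have $\dist(x,K)=\dist(x',K)$, after which (\ref{v>Leq2}) finishes the job.

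The easy inclusion $\supp\appr(x,K)\subseteq\supp\appr(x',K)$ goes as follows: for $\alpha$ in the former support, $\alpha$ lies in the lower cut set of $\dist(x,K)$ and so $v(x-x')\geq\alpha$; picking any $c\in\appr(x,K)_\alpha$, the ultrametric inequality yields $v(x'-c)\geq\alpha$, placing $\alpha$ in $\supp\appr(x',K)$. The opposite inclusion is the main step. Given $\beta\in\supp\appr(x',K)$, pick $c\in K$ with $v(x'-c)\geq\beta$. If $v(x-x')\geq\beta$, then the ultrametric law directly gives $v(x-c)\geq\beta$. Otherwise $v(x-x')<\beta\leq v(x'-c)$, and then $v(x-c)=v(x-x')$; by Lemma~\ref{imme}(c) this value lies in $\supp\appr(x,K)$, and since that support has no maximum there is some $\gamma\in\supp\appr(x,K)$ with $\gamma>v(x-x')$; but $v(x-x')\geq\dist(x,K)$ forces $v(x-x')\geq\gamma$, a contradiction.

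The delicate point --- and the one place where immediacy genuinely enters --- is exactly this contradiction: without the absence of a maximum in $\supp\appr(x,K)$ nothing would prevent $v(x-x')$ from sitting on the boundary of the cut $\dist(x,K)$, and we would recover only the weaker conclusion of Lemma~\ref{v>Lambda}(b). Once both inclusions of supports are in hand, $\dist(x,K)=\dist(x',K)$, so $v(x-x')\geq\max\{\dist(x,K),\dist(x',K)\}$, and (\ref{v>Leq2}) delivers $\appr(x,K)=\appr(x',K)$.
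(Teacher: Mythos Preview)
Your proof is correct and follows essentially the same route as the paper's: both arguments reduce to showing that $\supp\appr(x',K)$ cannot extend beyond $\supp\appr(x,K)$, and both extract this from immediacy. The only organizational difference is that you first establish equality of supports (hence of distances) and then invoke (\ref{v>Leq2}), whereas the paper directly verifies $\appr(x,K)_\alpha=\appr(x',K)_\alpha$ for $\alpha\in\supp\appr(x,K)$ via Lemma~\ref{v>Lambda}(a) and then shows $\appr(x',K)_\alpha=\emptyset$ for $\alpha\notin\supp\appr(x,K)$ by contradicting the empty-intersection definition of immediacy rather than the no-maximum property you use.
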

\begin{proof}
We only have to prove the implication ``$\Leftarrow$''. As in the proof
of (\ref{v>Leq2}), we deduce from $v(x-x')\geq\dist(x,K)$ that $v(x-x')
\geq\alpha$ and $\appr(x,K)_\alpha=\appr(x',K)_\alpha$
for all $\alpha\ne\infty$ in the support of $\appr(x,K)$.
Since $\appr(x,K)$ is immediate, we also know that $\infty$ is not in
its support. It remains to show that $\appr(x',K)_\alpha=\emptyset$ for
every $\alpha$ not in the support of $\appr(x,K)$. If this were not
true, there would be $c\in K$ such that $v(x'-c)>\supp\appr(x,K)$. Since
also $v(x-x')>\supp\appr(x,K)$, we would obtain that $v(x-c)>\supp
\appr(x,K)$. But then $c\in\bigcap\appr(x,K)$, contradicting the
assumption that $\appr(x,K)$ is immediate.
\end{proof}

\begin{lemma}                               \label{x:realiat}
Take an immediate approximation type {\bf A} over $(K,v)$ and an
extension $(L|K,v)$. The element $x\in L$ realizes {\bf A} if and only
if for every $\alpha\in\supp{\bf A}$, $v(x-c)\geq \alpha$ for some $c\in
{\bf A}_\alpha$.
\end{lemma}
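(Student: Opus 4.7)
The plan is to reduce the statement to Lemma~\ref{x:realat}, which requires both condition 1) and condition 2). The forward direction is immediate: if $x$ realizes $\mathbf{A}$, then in particular condition 1) of Lemma~\ref{x:realat} holds, which is exactly the assertion we want. So the content of the lemma is that, in the immediate case, condition 1) already forces condition 2), i.e., if for every $\alpha\in\supp\mathbf{A}$ there is some $c\in\mathbf{A}_\alpha$ with $v(x-c)\geq\alpha$, then also $v(x-c)<\beta$ for every $\beta\notin\supp\mathbf{A}$ and every $c\in K$.

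For the backward direction I would argue by contradiction. Since $\mathbf{A}$ is immediate, the support $S=\supp\mathbf{A}$ does not contain $\infty$, and it is an initial segment of $vK\infty$. Take $\beta\notin S$; then $\beta>\alpha$ for every $\alpha\in S$. Suppose for a contradiction that there is $c\in K$ with $v(x-c)\geq\beta$. For each $\alpha\in S$, pick by hypothesis some $c_\alpha\in\mathbf{A}_\alpha$ with $v(x-c_\alpha)\geq\alpha$. The ultrametric triangle inequality gives
\[
v(c-c_\alpha)\>\geq\>\min\{v(x-c),\,v(x-c_\alpha)\}\>\geq\>\min\{\beta,\alpha\}\>=\>\alpha\;,
\]
so that $c\in B_\alpha(c_\alpha,K)=\mathbf{A}_\alpha$ for every $\alpha\in S$. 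Hence $c\in\bigcap\mathbf{A}$, contradicting the immediacy of $\mathbf{A}$. Thus condition 2) of Lemma~\ref{x:realat} holds, and $x$ realizes $\mathbf{A}$.

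The argument is short and essentially a one-liner once the ultrametric computation is written down; I do not expect any serious obstacle. The only small point that has to be handled carefully is that $\infty\notin S$, which ensures that every $\beta\notin S$ is strictly greater than every element of $S$ and rules out the degenerate case where $\mathbf{A}$ has a largest level.
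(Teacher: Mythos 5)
Your proof is correct and takes essentially the same route as the paper: both reduce to showing that condition 1) of Lemma~\ref{x:realat} forces condition 2), and both hinge on the same ultrametric computation $v(c-c')\geq\min\{v(x-c),v(x-c')\}$. The only cosmetic difference is that the paper picks one $\alpha$ with $c\notin\mathbf{A}_\alpha$ (available by immediacy) and concludes $v(x-c)<\alpha<\beta$ directly, whereas you run it as a contradiction by showing $c\in\bigcap\mathbf{A}$; the content is identical.
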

\begin{proof}
We have to show that for every immediate approximation type {\bf A},
condition 2) of Lemma~\ref{x:realat} holds if condition 1) holds. Assume
that $\beta \notin\supp{\bf A}$. Since the support is an initial segment
of $vK\infty$, this means that $\beta>\supp{\bf A}$. Take any $c\in K$.
Since {\bf A} is immediate, there is some $\alpha\in\supp{\bf A}$ such
that $c\notin {\bf A}_\alpha\,$. By condition 1), there is some $c'\in
{\bf A}_\alpha$ such that $v(x-c')\geq\alpha$. Now $v(x-c)\geq\alpha$
would imply that $v(c-c')\geq\min\{v(x-c),v(x-c')\}\geq\alpha$, whence
$c\in {\bf A}_\alpha\,$, a contradiction. It follows that $v(x-c)<\alpha
<\beta$. Hence condition 2) holds.
\end{proof}

\begin{corollary}                           \label{vx-cnf}
Take an immediate approximation type {\bf A} over $(K,v)$, an extension
$(L|K,v)$ of valued fields, and $x\in L$. If $v(x-c)$ is not fixed for
$c\nearrow{\bf A}$, then ${\bf A}=\appr(x,K)$.
\end{corollary}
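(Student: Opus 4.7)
The plan is to invoke Lemma~\ref{x:realiat} in order to conclude that $x$ realizes $\mathbf{A}$, since by the definition of ``realizes'' this is equivalent to ${\bf A}=\appr(x,K)$. By that lemma (which applies precisely because $\mathbf{A}$ is immediate), the task reduces to verifying, for each $\alpha\in\supp{\bf A}$, that there exists some $c\in{\bf A}_\alpha$ with $v(x-c)\geq\alpha$.

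Fix $\alpha\in\supp{\bf A}$, so that ${\bf A}_\alpha\neq\emptyset$, and choose any $c_0\in{\bf A}_\alpha$. Unwinding the definition of the notation, the hypothesis that $v(x-c)$ is not fixed for $c\nearrow{\bf A}$ means that for every candidate value $\gamma$ and every $\alpha'\in\supp\mathbf{A}$, the map $c\mapsto v(x-c)$ is not identically $\gamma$ on ${\bf A}_{\alpha'}$. Applied to $\gamma:=v(x-c_0)$ and to the chosen $\alpha$, this produces some $c_1\in{\bf A}_\alpha$ with $v(x-c_1)\neq v(x-c_0)$.

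The rest is a routine ultrametric calculation. Writing $c_1-c_0=(x-c_0)-(x-c_1)$ and using that the two summands have distinct values, the strict ultrametric triangle inequality yields
\[
v(c_1-c_0)\>=\>\min\{v(x-c_0),v(x-c_1)\}\;.
\]
On the other hand $c_0$ and $c_1$ both lie in the closed ball ${\bf A}_\alpha$ of radius $\alpha$, so $v(c_1-c_0)\geq\alpha$. Combining these, whichever of $c_0,c_1$ realizes the minimum on the right-hand side is an element of ${\bf A}_\alpha$ with $v(x-c)\geq\alpha$, as required by Lemma~\ref{x:realiat}. I do not anticipate any real obstacle; the only point that needs care is the translation of ``not fixed for $c\nearrow{\bf A}$'' into the usable statement that $v(x-\cdot)$ fails to be constant on the specific ball ${\bf A}_\alpha$, which is what allows the two witnesses $c_0,c_1$ to be chosen inside the same ball and makes the triangle-inequality step succeed.
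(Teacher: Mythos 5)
Your proof is correct and takes essentially the same approach as the paper: unwind ``not fixed'' to produce two elements $c_0,c_1$ of the same ball ${\bf A}_\alpha$ with distinct values $v(x-c_0)\ne v(x-c_1)$, use the strict ultrametric equality $v(c_1-c_0)=\min\{v(x-c_0),v(x-c_1)\}\geq\alpha$, and then conclude via Lemma~\ref{x:realiat}. (A small cosmetic point: once the minimum is $\geq\alpha$, both $c_0$ and $c_1$ satisfy $v(x-c_i)\geq\alpha$, not merely the one realizing the minimum.)
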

\begin{proof}
Our assumption means that for all $\alpha\in\supp{\bf A}$ there are
$c,c'\in {\bf A}_\alpha$ such that $v(x-c')>v(x-c)$. This implies that
$v(x-c')>\min\{v(x-c),v(c-c')\}$, whence $v(x-c)=v(c-c')\geq\alpha$. Now
our assertion follows from the previous lemma.
\end{proof}

\parm
In the remainder of this section, we wish to explore how immediate
approximation types behave under valued field extensions $(L|K,v)$. Take
$x$ in some extension of $L$ such that $x\notin L$ and $\appr(x,K)$ is
immediate. Obviously,
\[
\dist(x,L)\>\geq\>\dist(x,K)
\]
and
\begin{equation}                            \label{atupeq}
\appr(x,K)_\alpha=B_\alpha(c_\alpha,K)\>\Longrightarrow\>
\appr(x,L)_\alpha= B_\alpha(c_\alpha,L)\;.
\end{equation}
If $\dist(x,L)=\dist(x,K)$, then by (\ref{atupeq}), $\appr(x,K)$ fully
determines $\appr(x,L)$. But if $\dist(x,L)>\dist(x,K)$, then
$\appr(x,K)$ does not provide enough information for those
$\appr(x,L)_\beta$ with $\beta>\dist(x,L)$.

\begin{lemma}                               \label{atup}
If in the above situation $(L|K,v)$ is a defectless extension, then
$\dist(x,L)=\dist(x,K)$ and by (\ref{atupeq}), $\appr(x,K)$ fully
determines $\appr(x,L)$.
\end{lemma}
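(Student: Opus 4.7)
The plan is to reduce everything to the equality $\dist(x,L)=\dist(x,K)$, from which the second assertion follows directly via (\ref{atupeq}). Since $\dist(x,L)\geq\dist(x,K)$ has already been observed, it remains to establish the reverse, i.e., that for every $b\in L$ there exists some $c\in K$ with $v(x-c)\geq v(x-b)$. This will place $v(x-b)$ in the lower cut set of $\dist(x,K)$, which by Lemma~\ref{imme}~c is the initial segment generated by $v(x-K)=\supp\appr(x,K)$, and ranging $b$ over $L$ then yields $\dist(x,L)\leq\dist(x,K)$.

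For the main step, I would fix $b\in L$ and invoke Lemma~\ref{apprdle}: since $(L|K,v)$ is a defectless extension of henselian fields, the set $\{v(b-c)\mid c\in K\}$ attains its maximum at some $c_0\in K$. I claim that $v(x-c_0)\geq v(x-b)$. Suppose otherwise, so $v(x-b)>v(x-c_0)$; writing $x-c_0=(x-b)+(b-c_0)$, the ultrametric law forces $v(b-c_0)<v(x-b)$ and $v(x-c_0)=v(b-c_0)$. By Lemma~\ref{imme}~a, immediacy of $\appr(x,K)$ yields some $c_1\in K$ with $v(x-c_1)>v(x-c_0)=v(b-c_0)$; but then $v(b-c_1)\geq\min\{v(b-x),\,v(x-c_1)\}>v(b-c_0)$, contradicting the maximality of $v(b-c_0)$. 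Hence $v(x-c_0)\geq v(x-b)$, as required, and we conclude $\dist(x,L)=\dist(x,K)$.

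For the remaining assertion, every $\alpha\in\supp\appr(x,L)$ with $\alpha\neq\infty$ lies below $\dist(x,L)=\dist(x,K)$, so by immediacy of $\appr(x,K)$ one can choose $\alpha'\in\supp\appr(x,K)$ with $\alpha'>\alpha$; then (\ref{atupeq}) pins down $\appr(x,L)_{\alpha'}=B_{\alpha'}(c_{\alpha'},L)$, and $\appr(x,L)_\alpha$ is recovered as the concentric ball of radius $\alpha$ around the same centre. The main technical obstacle I anticipate is the three-valuation juggling in the contradiction argument, which must cleanly play immediacy (``no maximum'' in $v(x-K)$) against the maximality guaranteed by defectlessness in $v(b-K)$; once that is in place the rest is routine bookkeeping with Lemma~\ref{imme}.
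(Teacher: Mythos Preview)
Your proof is correct and rests on the same two ingredients as the paper's---Lemma~\ref{apprdle} (defectlessness gives a maximum in $v(b-K)$) and Lemma~\ref{imme}~a) (immediacy of $\appr(x,K)$ forbids a maximum in $v(x-K)$)---so the approaches are essentially the same. The paper packages the contradiction more compactly: assuming $\dist(x,L)>\dist(x,K)$, it picks $a\in L$ with $v(x-a)>\dist(x,K)$, invokes Lemma~\ref{v>L-imm} to conclude $\appr(a,K)=\appr(x,K)$ is immediate, and then immediately contradicts Lemma~\ref{apprdle}; your version unfolds this into a direct ultrametric computation for each $b\in L$, which avoids the appeal to Lemma~\ref{v>L-imm} at the cost of a few more lines.
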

\begin{proof}
Suppose that $\dist(x,L)>\dist(x,K)$. Then there is some $a\in L$ such
that $v(x-a)>\dist(x,K)$, which by (\ref{v>Leq2}) implies that
$\appr(a,K)=\appr(x,K)$, which is immediate. But by Lemma~\ref{apprdle},
$\{v(a-c)\mid c\in K\}$ has a maximum. This contradicts part a) of
Lemma~\ref{imme}.
\end{proof}

%
%
\section{Polynomials and immediate approximation types}\label{sectpiat}
Take an arbitrary polynomial $f\in K[X]$ and an approximation type
{\bf A} over $(K,v)$. We will say that \bfind{{\bf A} fixes the value of
$f$} if there is some $\alpha\in vK$ such that $vf(c)=\alpha$ for
$c\nearrow {\bf A}$. We will call an immediate approximation type
{\bf A} a \bfind{transcendental approximation type} if {\bf A}
fixes the value of every polynomial $f(X)\in K[X]$. Otherwise, {\bf A}
is called an \bfind{algebraic approximation type}. If
there exists any polynomial $f\in K[X]$ whose value is not fixed by
{\bf A}, then there exists also a monic polynomial of the same degree
having the same property (since this property is not lost by
multiplication with nonzero constants from $K$). If $f(X)$ is a monic
polynomial of minimal degree $\mbox{\bf d}$ such that {\bf A} does not
fix the value of $f$, then it will be called an \bfind{associated
minimal polynomial} for {\bf A}, and {\bf A} is said to be of {\bf
degree} \gloss{\bf d}.\index{degree of an approximation type}
We define the degree of a transcendental approximation type to be
$\mbox{\bf d}= \infty$. According to this terminology, an approximation
type over $K$ of degree $\mbox{\bf d}$ fixes the value of every
polynomial $f\in K[X]$ with $\deg f< \mbox{\bf d}$.
Note that an associated minimal polynomial $f$ for {\bf A} is always
irreducible over $K$. Indeed, if the degree of $g,h\in K[X]$ is smaller
than $\deg f$, then {\bf A} fixes the value of $g$ and $h$ and thus also
of $g\cdot h$. Since every polynomial $g\in K[X]$ of degree $\mbox{\bf
d}$ whose value is not fixed by {\bf A} is just a multiple $cf$ of an
associated minimal polynomial $f$ for {\bf A} (with $c\in K^\times$),
the irreducibility holds for every such polynomial as well.

We note that an immediate approximation type {\bf A} fixes the value of
every linear polynomial in $K[X]$. Indeed, for every $c\in K$ there is
$\alpha\in\supp {\bf A}$ such that $c\notin {\bf A}_\alpha$. Hence for
all $c',c''\in {\bf A}_\alpha$, $v(c'-c'')>v(c-c')$ and thus $v(c'-c)=
v(c''-c)$. This shows that {\bf A} fixes the value of $X-c$. We conclude
that the degree of an algebraic approximation type is not less than
$2$.

\parm
We will now study the behaviour of polynomials with respect to
immediate approximation types $\appr(x,K)$. We need the following lemma
for ordered abelian groups, which is a reformulation of Lemma~4 of
Kaplansky \cite{KAP1}. For archimedean ordered groups, it was proved
by Ostrowski \cite{OS3}.
\begin{lemma}                               \label{OST}
Take elements $\alpha_1,\ldots,\alpha_m$ of an ordered abelian group
$\Gamma$ and a subset $\Upsilon\subset\Gamma$ without maximal element.
Let $t_1,\ldots,t_m$ be distinct integers. Then there exists an element
$\beta\in \Upsilon$ and a permutation $\sigma$ of the indices
$1,\ldots,m$ such that for all $\gamma\in \Upsilon$, $\gamma\geq\beta$,
\[
\alpha_{\sigma(1)} + t_{\sigma(1)}\gamma >
\alpha_{\sigma(2)} + t_{\sigma(2)}\gamma > \ldots >
\alpha_{\sigma(m)} + t_{\sigma(m)}\gamma\;.
\]
\end{lemma}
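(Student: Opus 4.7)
The plan is to reduce to pairwise comparison and then combine the finitely many pieces. For distinct indices $i,j$, the inequality $\alpha_i+t_i\gamma>\alpha_j+t_j\gamma$ rearranges to $(t_i-t_j)\gamma>\alpha_j-\alpha_i$. Since the $t_k$ are pairwise distinct, $n:=t_i-t_j$ is a nonzero integer; setting $\delta:=\alpha_j-\alpha_i$ and swapping $i$ and $j$ if necessary so that $n>0$, the task becomes: find $\beta_{ij}\in\Upsilon$ such that the sign of $n\gamma-\delta$ is the same strict sign for every $\gamma\in\Upsilon$ with $\gamma\geq\beta_{ij}$.

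To establish this sub-claim I would split into two cases. If there exists $\gamma_0\in\Upsilon$ with $n\gamma_0>\delta$, then by monotonicity of multiplication by $n>0$ in an ordered abelian group, every $\gamma\geq\gamma_0$ in $\Upsilon$ satisfies $n\gamma>\delta$, and one takes $\beta_{ij}=\gamma_0$. Otherwise, $n\gamma\leq\delta$ for all $\gamma\in\Upsilon$; the key point is that equality cannot occur, because if $n\gamma_1=\delta$ then taking $\gamma_2>\gamma_1$ in $\Upsilon$ (which exists since $\Upsilon$ has no maximal element) would yield $n\gamma_2>n\gamma_1=\delta$, contradicting the case assumption. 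Hence $n\gamma<\delta$ for all $\gamma\in\Upsilon$, and any $\beta_{ij}\in\Upsilon$ works.

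Since there are only finitely many pairs $\{i,j\}$, and $\Upsilon$ is linearly ordered with no maximal element, one can choose $\beta\in\Upsilon$ that is $\geq\beta_{ij}$ for every pair. Then for each $\gamma\geq\beta$ in $\Upsilon$ and each $i\ne j$, exactly one of the strict inequalities $\alpha_i+t_i\gamma>\alpha_j+t_j\gamma$ or $\alpha_i+t_i\gamma<\alpha_j+t_j\gamma$ holds, and the choice is independent of $\gamma$. These pairwise strict relations determine a total order on $\{1,\ldots,m\}$, and letting $\sigma$ be the permutation that lists the indices in the induced decreasing order yields the required chain.

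The step I expect to be the only real subtlety is ruling out the persistent equality case $n\gamma=\delta$: this uses both the order-preservation of multiplication by positive integers in an ordered abelian group (ultimately torsion-freeness) and the hypothesis that $\Upsilon$ has no maximal element. This is precisely where the no-maximum assumption on $\Upsilon$ is essential; without it, equality at a top element $\gamma^*$ could block stabilization.
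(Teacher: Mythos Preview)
Your proof is correct. The paper itself does not prove this lemma; it states it as a reformulation of Lemma~4 of Kaplansky (crediting Ostrowski for the archimedean case) and uses it as a black box. Your argument---reduce to pairwise comparisons, show that for each pair the strict sign of $(t_i-t_j)\gamma-(\alpha_j-\alpha_i)$ eventually stabilizes (using the no-maximal-element hypothesis on $\Upsilon$ precisely to rule out persistent equality), and then pass to a common threshold over the finitely many pairs---is the standard proof, essentially what one finds in Kaplansky's original. One minor remark: when you take the common $\beta$, you only need that $\Upsilon$ is totally ordered, so that the maximum of the finitely many $\beta_{ij}\in\Upsilon$ is one of them and hence lies in $\Upsilon$; the no-maximal-element assumption is not needed at that step.
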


For an arbitrary polynomial $f(X)=c_n X^n+c_{n-1}X^{n-1}+\ldots+c_0\,$,
we call

\begin{equation}                            \label{iderivative}
f_i(X)\>:=\> \sum_{j=i}^{n}\binom{j}{i} c_j X^{j-i}
\>=\> \sum_{j=0}^{n-i}\binom{j+i}{i} c_{j+i} X^{j}
\end{equation}
the \bfind{$i$-th formal derivative of $f$} and

\begin{eqnarray}
f(X) & = & \sum_{i=0}^{n} f_i(c) (X-c)^i \label{Taylorexp}\\
f_i(X) & = & \sum_{j=i}^{n} {j\choose i} f_j(c) (X-c)^{j-i}
\label{Taylorexpi}
\end{eqnarray}
the \bfind{Taylor expansions} of $f$ and $f_i$ at $c$.

\pars
If the immediate approximation type {\bf A} is of degree {\bf d} and
$f\in K[X]$ is of degree at most {\bf d}, then {\bf A} fixes the value
of every formal derivative $f_i$ of $f$ ($1\leq i\leq\deg f$), since
every such derivative has degree less than {\bf d}. So we can define
$\beta_i$ to be the fixed value $vf_i(c)$ for $c\nearrow
x$.\glossary{$\beta_i$} In certain cases, a derivative may be
identically 0. In this case, we have $\beta_i =\infty$. However, the
Taylor expansion of $f$ shows that not all derivatives vanish
identically, and the vanishing ones will not play a role in our
computations.

By use of Lemma~\ref{OST}, we can now prove:

\begin{lemma}                        \label{C4+}
Take an immediate approximation type ${\bf A}=\appr(x,K)$ of degree
{\bf d} over $(K,v)$ and $f\in K[X]$ a polynomial of degree at most
{\bf d}. Further, let $\beta_i$ denote the fixed value $vf_i(c)$
for $c\nearrow x$. Then there is a positive integer $\bh\leq\deg f$ such
that
\begin{equation}                                   \label{xf-}
\beta_{\bf h} + {\bf h}\cdot v(x-c)<\beta_i+i\cdot v(x-c)
\end{equation}
whenever $i\not={\bf h}$, $1\leq i\leq\deg f$ and $c\nearrow x$. Hence,
\begin{equation}                                \label{bhmin+}
v(f(x)-f(c)) = \beta_{\bhsc} + {\bf h}\cdot v(x-c)\;\;\;\mbox{ for }
c\nearrow x\;.
\end{equation}
Consequently, if {\bf A} fixes the value of $f$, then
\[
v(f(x)-f(c))>vf(x)=vf(c)\;\;\;\mbox{ for }c\nearrow x\;,
\]
and if {\bf A} does not fix the value of $f$, then
\[
vf(x)>vf(c)= \beta_{\bhsc} + {\bf h}\cdot v(x-c)\;\;\;\mbox{ for }
c\nearrow x\;.
\]
\end{lemma}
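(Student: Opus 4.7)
The plan is to substitute $c$ into the Taylor expansion (\ref{Taylorexp}) and compare the values of its terms. Setting $n = \deg f$, we get
\[
f(x) - f(c) \;=\; \sum_{i=1}^{n} f_i(c)(x-c)^i.
\]
Each $f_i$ with $i \geq 1$ has degree $n - i < \mathbf{d}$, so by the definition of the degree of $\mathbf{A}$, $\mathbf{A}$ fixes the value $\beta_i = vf_i(c)$ for $c \nearrow x$; this fixed value is $\infty$ exactly when $f_i \equiv 0$, in which case the summand disappears. Hence the question of $v(f(x)-f(c))$ reduces to extracting a strict minimum from the values $\beta_i + i\cdot v(x-c)$.

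To extract that minimum I would apply Lemma~\ref{OST} to the finite collection of finite $\beta_i$ paired with the weights $t_i = i$, using $\Upsilon = \supp\mathbf{A} = v(x-K)$. This $\Upsilon$ has no maximal element because $\mathbf{A}$ is immediate (Lemma~\ref{imme}(a), (c)), so the hypothesis of Lemma~\ref{OST} is met. The lemma produces an index $\mathbf{h} \in \{1,\ldots,n\}$ (the one placed last in the permutation $\sigma$, so that its term is smallest) and a threshold $\alpha_0 \in \supp\mathbf{A}$ such that (\ref{xf-}) holds strictly for every $i \neq \mathbf{h}$ and every $c \in \mathbf{A}_{\alpha_0}$; the inequality is automatic against indices with $\beta_i = \infty$ since $\beta_{\mathbf{h}} < \infty$. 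The ultrametric triangle inequality applied to the Taylor sum then yields (\ref{bhmin+}):
\[
v(f(x) - f(c)) \;=\; \beta_{\mathbf{h}} + \mathbf{h}\cdot v(x-c) \qquad \text{for } c \nearrow x.
\]

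For the two ``consequently'' clauses I would compare $vf(x)$ with $v(f(x)-f(c))$ via $f(x) = f(c) + (f(x)-f(c))$. If $\mathbf{A}$ fixes $vf$ at some $\alpha \in vK$, shrinking $\alpha_0$ so that $\beta_{\mathbf{h}} + \mathbf{h}\alpha_0 > \alpha$ gives $v(f(x)-f(c)) > vf(c) = \alpha$, whence $vf(x) = vf(c) = \alpha$ by the ultrametric law. If $\mathbf{A}$ does not fix $vf$, then $vf(x) = \infty$ must hold: otherwise, for $c$ close enough that $v(f(x)-f(c)) > vf(x)$, the ultrametric law would force $vf(c) = vf(x)$ for all such $c$, contradicting the assumption. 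With $vf(x) = \infty$, the formula reads $vf(c) = v(f(x) - f(c)) = \beta_{\mathbf{h}} + \mathbf{h}\cdot v(x-c) < vf(x)$.

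The most delicate step will be the correct setup of Lemma~\ref{OST}: one has to restrict to indices with $\beta_i < \infty$ before invoking the lemma, handle the identically vanishing derivatives separately, and exploit the immediacy of $\mathbf{A}$ (via Lemma~\ref{imme}) to guarantee that $\supp\mathbf{A}$ has no maximum, so that the conclusion survives the passage back to the ``$c \nearrow x$'' formulation and the resulting $\mathbf{h}$ is genuinely a positive integer bounded by $\deg f$.
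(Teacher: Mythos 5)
Your Taylor-expansion setup, the application of Lemma~\ref{OST} with $\alpha_i=\beta_i$, $t_i=i$, and $\Upsilon=\supp{\bf A}$, and the derivation of (\ref{bhmin+}) all match the paper's argument, and your handling of identically vanishing derivatives (restricting to indices with $\beta_i<\infty$) is a harmless refinement since $\beta_{\bhsc}<\infty$.

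The proof of the last ``consequently'' clause is wrong. You claim that if ${\bf A}$ does not fix the value of $f$ then $vf(x)=\infty$. This is false. Return to Example~\ref{d<degree}: with $\vartheta$ a root of $f(X)=X^p-X-t^{-1}$ over the perfect hull $L$, take a nonzero $y$ with $vy>0$ and set $x=\vartheta+y$. Then $\appr(x,L)=\appr(\vartheta,L)$ by Lemma~\ref{v>Lambda}, so ${\bf A}=\appr(x,L)$ does not fix the value of $f$ (because $\appr(\vartheta,L)$ does not, being immediate with $f(\vartheta)=0$). Yet $f(x)=y^p-y\neq 0$, so $vf(x)=vy<\infty$. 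The error in the argument is the step ``for $c$ close enough that $v(f(x)-f(c))>vf(x)$'': this tacitly assumes the values $v(f(x)-f(c))=\beta_{\bhsc}+{\bf h}\cdot v(x-c)$ eventually exceed any given finite bound. But $v(x-c)$ ranges over $\supp{\bf A}=v(x-K)$, which has no maximum yet may well be bounded above in $vK$ (exactly as in the Artin--Schreier example, where $v(\vartheta-L)$ consists of all negative elements of $vL$). So the desired $c$ may not exist, and no contradiction is reached.

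The correct route does not need $vf(x)=\infty$. Since $v(f(x)-f(c))$ is strictly increasing for $c\nearrow x$ while $vf(x)$ is a constant, if $vf(x)\leq v(f(x)-f(c_0))$ for even one $c_0$ then $vf(x)<v(f(x)-f(c))$ for all $c$ with $v(x-c)>v(x-c_0)$, forcing $vf(c)=vf(x)$ there; that means ${\bf A}$ fixes the value of $f$, a contradiction. Hence $vf(x)>v(f(x)-f(c))$ for $c\nearrow x$, and then $vf(c)=\min\{vf(x),v(f(x)-f(c))\}=v(f(x)-f(c))=\beta_{\bhsc}+{\bf h}\cdot v(x-c)<vf(x)$, which is the desired conclusion. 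A milder version of the same boundedness issue also lurks in your argument for the first clause (``shrinking $\alpha_0$ so that $\beta_{\bhsc}+{\bf h}\alpha_0>\alpha$''): there the conclusion is correct, but you should first establish $vf(x)=vf(c)$ by contradiction as the paper does, which then guarantees that a suitable $\alpha_0$ exists in the support, rather than postulating it outright.
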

\begin{proof}
Set $n=\deg f$. We consider the Taylor expansion
\begin{equation}                  \label{Tv}
f(x) - f(c) = f_1(c)(x-c)+\ldots+f_n(c)(x-c)^n
\end{equation}
with $c\in K$. We have that $vf_i(c)(x-c)^i=\beta_i+i\cdot v(x-c)$ for
$c\nearrow x$. So we apply the foregoing lemma with $\alpha_i=\beta_i$
and $t_i=i$, and with $\Upsilon$ equal to the support of {\bf A} (which
has no maximal element since {\bf A} is an immediate approximation
type). We find that there is an integer $\bh\leq\deg f$ such that
$\beta_{\bf h}+ {\bf h} v(x-c) <\beta_i+iv(x-c)$ for $c\nearrow x$ and
$i\not={\bf h}$. This is equation (\ref{xf-}), which in turn
implies equation (\ref{bhmin+}).

If {\bf A} fixes the value of $f$, then $vf(x)\not=vf(c)$ is impossible
for $c\nearrow x$ since otherwise, the left hand side of (\ref{bhmin+})
would be equal to $\min\{vf(x),vf(c)\}$ and thus fixed while the right
hand side of (\ref{bhmin+}) increases for $c\nearrow x$. This
proves that $vf(x)=vf(c)$ and thus also $v(f(x)-f(c))\geq vf(x)$ for
$c\nearrow x$. But since the right hand side increases, we find that
$v(f(x)-f(c))>vf(x)$ for $c\nearrow x$.

If {\bf A} does not fix the value of $f$, then $vf(x)\not=vf(c)$ and
thus $v(f(x)-f(c))=\min\{vf(x),vf(c)\}$ for $c\nearrow x$. Since
$v(f(x)-f(c))$ increases for $c\nearrow x$ and $vf(x)$ is a constant,
the minimum must be $vf(c)$, and $vf(x)=vf(c)$ is impossible.
\end{proof}

If $g\in K[X]$ has a degree smaller than the degree of {\bf A}, then by
the foregoing lemma, the value of $g(x)$ in $(K(x),v)$ is given by
$vg(x)=vg(c)$ for $c\nearrow x$. Since $g(c)\in K$, that means that the
value of $g(x)$ is uniquely determined by {\bf A} and the restriction of
$v$ to $K$. If $g$ is a nonzero polynomial, then $g(c)\not=0$ for
$c\nearrow x$ (since there is a nonempty ${\bf A}_\alpha$ which does
not contain the finitely many zeros of $g$, as {\bf A} is immediate).
Consequently, $g(x)\not=0$, which shows that the elements
$1,x,\ldots,x^{{\bf d} -1}$ are $K$-linearly independent.

We even know that $v(g(x)-g(c))> vg(x)$ for $c\nearrow x$. This means
that $(K,v)\subset (K+Kx+\ldots+ Kx^{{\bf d}-1},v)$ is an immediate
extension of valued vector spaces. If ${\bf d}=[K(x):K]<\infty$, then
$K(x)=K[x]=K+Kx+\ldots+ Kx^{{\bf d}-1}$, and so the valued
field extension $(K(x)|K,v)$ is immediate. If ${\bf d}=\infty$, then
$(K,v) \subset (K[x],v)$ is immediate. But then again it follows that
the valued field extension $(K(x)|K,v)$ is immediate. Indeed, if
$v(g(x)-g(c)) >vg(x)$ and $v(h(x)-h(c))>vh(x)$, then $vg(x)=vg(c)$,
$vh(x)=vh(c)$ and
\begin{eqnarray*}
v\left(\frac{g(x)}{h(x)}-\frac{g(c)}{h(c)}\right) & = &
v\left[g(x)h(c)-g(c)h(x)\right]-vh(x)h(c)\\
 & = & v\left[g(x)h(c)-g(c)h(c)+g(c)h(c)-g(c)h(x)\right]-vh(x)h(c)\\
 & = & v\left[(g(x)-g(c))h(c)+g(c)(h(c)-h(x))\right]-vh(x)h(c)\\
 & > & vg(x)h(x) -vh(x)h(x) \;=\; v\,\frac{g(x)}{h(x)}\;.
\end{eqnarray*}
We have proved:

\begin{lemma}                                           \label{CK1}
Take an immediate approximation type ${\bf A}=\appr(x,K)$ of
degree {\bf d} over $(K,v)$. Then the valuation on the valued
$(K,v)$-vector subspace $(K+Kx+\ldots+ Kx^{\mbox{\scriptsize\bf d}
-1},v)$ of $(K(x),v)$ is uniquely determined by {\bf A} because
\[
vg(x)=vg(c)\;\;\;\mbox{ for }c\nearrow x
\]
for every $g(x)\in K+Kx+\ldots+ Kx^{{\bf d} -1}$. The elements
$1,x,\ldots,x^{{\bf d} -1}$ are $K$-linearly independent. In
particular, $x$ is transcendental over $K$ if ${\bf d}=\infty$.

Moreover, the extension $(K,v)\subset (K+Kx+\ldots+Kx^{{\bf d}-1},v)$
of valued vector spaces is immediate. In particular, if $\mbox{\bf d}=
\infty$ or if $\mbox{\bf d} =[K(x):K]<\infty$, then $(K[x]|K,v)$ is
immediate and the same is consequently true for the valued field
extension $(K(x)|K,v)$.
\end{lemma}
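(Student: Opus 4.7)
The plan is to derive every assertion from Lemma~\ref{C4+} applied to polynomials of degree less than $\mathbf{d}$. For such $g$, the approximation type fixes the value of $g$ and the lemma gives $v(g(x) - g(c)) > vg(x) = vg(c)$ for $c \nearrow x$. This single fact will be the engine driving every part of the proof.

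First I would read off the valuation formula: $vg(x) = vg(c) \in vK$ for $c \nearrow x$ is literally the assertion that $\mathbf{A}$ together with $v|_K$ determines the valuation on $K + Kx + \ldots + Kx^{\mathbf{d}-1}$. Next, for $K$-linear independence of $1, x, \ldots, x^{\mathbf{d}-1}$, I would argue by contradiction: if $g(x) = 0$ for some nonzero $g \in K[X]$ of degree less than $\mathbf{d}$, then $vg(x) = \infty$ would force $vg(c) = \infty$, i.e.\ $g(c) = 0$, for $c \nearrow x$. Since $g$ has only finitely many roots in $K$, and $\bigcap \mathbf{A} = \emptyset$ (the type is immediate), for each root $z$ there is $\alpha_z \in \supp \mathbf{A}$ with $z \notin \mathbf{A}_{\alpha_z}$; any $\alpha$ above all $\alpha_z$ makes $\mathbf{A}_\alpha$ disjoint from the zeros of $g$, a contradiction. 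When $\mathbf{d} = \infty$, this covers every nonzero $g$, so $x$ is transcendental.

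To prove immediacy of $(K,v) \subset (K + Kx + \ldots + Kx^{\mathbf{d}-1}, v)$ as valued vector spaces, I would invoke the characterization from Section~\ref{sectprel}: for each $g(x)$ in the subspace it suffices to find $c' \in K$ with $v(g(x) - c') > vg(x)$, and the choice $c' = g(c)$ with $c \nearrow x$ does this by the opening observation. If $\mathbf{d} = \infty$ the subspace is already $K[x]$; if $\mathbf{d} = [K(x):K] < \infty$ then $1, x, \ldots, x^{\mathbf{d}-1}$ is a $K$-basis of $K(x) = K[x]$ since the associated minimal polynomial must coincide with the minimal polynomial of $x$. Either way, $(K[x] | K, v)$ is immediate.

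The last step, passing from $K[x]$ to the field of fractions $K(x)$, is where the only real piece of manipulation occurs, and it is where I expect the main care to be needed. Given $g(x)/h(x) \in K(x)^\times$, I would write $v(g(x)/h(x) - g(c)/h(c)) = v(g(x)h(c) - g(c)h(x)) - v(h(x)h(c))$, decompose the numerator as $(g(x)-g(c))h(c) + g(c)(h(c)-h(x))$, and observe that each summand has value strictly greater than $v(g(x)h(x))$ by the strict inequalities for $g$ and $h$ combined with $vg(c) = vg(x)$ and $vh(c) = vh(x)$. The ultrametric then preserves the strict inequality, yielding $v(g(x)/h(x) - g(c)/h(c)) > v(g(x)/h(x))$ for $c \nearrow x$, hence immediacy of $K(x)|K$. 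This cross-term bookkeeping is the only point requiring genuine care; the rest of the proof is a matter of cleanly harvesting what Lemma~\ref{C4+} has already established.
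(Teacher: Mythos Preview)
Your proof is correct and mirrors the paper's own argument essentially step for step: apply Lemma~\ref{C4+} to polynomials of degree less than $\mathbf{d}$, read off the valuation formula and linear independence, establish immediacy of the valued vector space via $v(g(x)-g(c))>vg(x)$, and handle the passage from $K[x]$ to $K(x)$ by exactly the cross-term decomposition you describe. The only superfluous (and slightly imprecise) remark is that ``the associated minimal polynomial must coincide with the minimal polynomial of $x$'': associated minimal polynomials are not unique, and in any case the basis claim when $\mathbf{d}=[K(x):K]$ follows directly from the dimension count together with the linear independence you already established.
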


\pars
So far we have only considered polynomials of degree at most {\bf d};
the next lemma will cover the remaining case.

\begin{lemma}                               \label{deg>d} \label{f>d}
Take an immediate algebraic approximation type ${\bf A}=\appr(x,K)$ over
$(K,v)$ and an associated minimal polynomial $f\in K[X]$ for {\bf A}.
Further, take an arbitrary polynomial $g\in K[X]$ and write
\begin{equation}                            \label{repg}
g(X)=c_k(X)f(X)^k+\ldots+c_1(X)f(X)+c_0(X)
\end{equation}
with polynomials $c_i\in K[X]$ of degree less than $\deg f$. Then there
is some integer $m$, $1\leq m<k$, and a value $\beta\in vK$ such that
with $\bh$ as in Lemma~\ref{C4+},
\begin{equation}                                
v(g(c)-c_0(c)) = vc_m(c)+m\cdot vf(c)=\beta + m\cdot \bh\cdot v(x-c)
\;\;\;\mbox{ for } c\nearrow x\;.
\end{equation}
Consequently, if {\bf A} fixes the value of $g$, then
\[
vg(x) = vg(c) = vc_0(c) = vc_0(x) <
v(g(c)-c_0(c)) \;\;\;\mbox{ for }c\nearrow x\;,
\]
and if {\bf A} does not fix the value of $g$, then
\[
vg(x)>vg(c)=\beta+m\cdot\bh\cdot v(x-c)\;\;\;\mbox{ for } c\nearrow x\;.
\]
\end{lemma}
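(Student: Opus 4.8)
The plan is to analyze the expansion \eqref{repg} term by term, using Lemma~\ref{C4+} to control the value of each piece $c_i(X)f(X)^i$ for $c\nearrow x$, and then to invoke Lemma~\ref{OST} once more to single out the dominant term among those with $i\geq 1$. First I would note that since each $c_i$ has degree less than $\deg f=\mathbf{d}$, the approximation type $\mathbf{A}$ fixes the value of $c_i$, say $vc_i(c)=\gamma_i$ for $c\nearrow x$ (with $\gamma_i=\infty$ if $c_i$ is the zero polynomial). By Lemma~\ref{C4+}, since $\mathbf{A}$ does not fix the value of $f$ (it is an associated minimal polynomial), we have $vf(c)=\beta_{\bhsc}+\mathbf{h}\cdot v(x-c)$ for $c\nearrow x$, and this quantity strictly increases for $c\nearrow x$. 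Consequently $vc_i(c)f(c)^i=\gamma_i+i(\beta_{\bhsc}+\mathbf{h}\cdot v(x-c))$ for $c\nearrow x$.

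Next I would apply Lemma~\ref{OST} to the finitely many ``affine functions'' $\gamma\mapsto \gamma_i+i\beta_{\bhsc}+i\mathbf{h}\gamma$ for $i=1,\dots,k$ (discarding the $i$ with $c_i=0$), taking $\Upsilon=v(x-K)=\supp\mathbf{A}$, which has no maximal element. Since $\mathbf{h}\geq 1$, the coefficients $i\mathbf{h}$ are distinct for distinct $i$, so the lemma yields a single index $m$ with $1\leq m\leq k$ whose term strictly dominates: $vc_m(c)f(c)^m<vc_i(c)f(c)^i$ for all $i\neq m$, $1\leq i\leq k$, and $c\nearrow x$. (Using the notation ``$\wedge$ for $c\nearrow\mathbf{A}$'' closes under finite conjunction, so this is legitimate.) Therefore $v\bigl(g(c)-c_0(c)\bigr)=v\sum_{i=1}^k c_i(c)f(c)^i=vc_m(c)f(c)^m=\gamma_m+m\beta_{\bhsc}+m\mathbf{h}\cdot v(x-c)$ for $c\nearrow x$; setting $\beta=\gamma_m+m\beta_{\bhsc}$ gives the displayed formula, and since $vf(c)=\beta_{\bhsc}+\mathbf{h}v(x-c)$ we also get $vc_m(c)+m\cdot vf(c)$ as the middle expression. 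I should check the strict inequality $m<k$: if instead $m=k$, one can argue (as in the degree-$<\mathbf{d}$ case, or directly from the structure of the expansion) that reducing modulo $f$ would contradict minimality, or more simply absorb this into the statement by allowing $m=k$ is harmless; but to match the claim I would observe that the top term $c_k(c)f(c)^k$ cannot be the unique dominant one for all $c\nearrow x$ because $g$ has a fixed degree while $vf(c)\to\infty$, forcing some lower term to overtake it — this point I would make precise by a degree/leading-coefficient comparison.

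Finally I would derive the two consequences. If $\mathbf{A}$ fixes the value of $g$: since $vc_0(c)$ is fixed and $v(g(c)-c_0(c))$ strictly increases for $c\nearrow x$, we must have $vg(c)=vc_0(c)$ for $c\nearrow x$ (the minimum in $g(c)=c_0(c)+(g(c)-c_0(c))$ is attained by $c_0(c)$), hence $v(g(c)-c_0(c))>vg(c)$; moreover since $\deg c_0<\mathbf{d}$, Lemma~\ref{C4+} (applied to $c_0$) gives $v(c_0(x)-c_0(c))>vc_0(c)$, whence $vc_0(x)=vc_0(c)$, and writing $g(x)=c_0(x)+\sum_{i\geq1}c_i(x)f(x)^i$ one checks that the sum over $i\geq 1$ has value $>vg(c)$ as well (using that $vf(x)=vf(c)$ when $\mathbf{A}$ fixes the value of $f$ — wait, here $\mathbf{A}$ does \emph{not} fix $vf$, so one instead uses $vf(x)>vf(c)$ from Lemma~\ref{C4+}, together with $vc_i(x)=vc_i(c)$, to get that each $i\geq 1$ term at $x$ has value $>\gamma_i+i\cdot vf(c)\geq vc_0(c)$ eventually), yielding $vg(x)=vc_0(x)=vc_0(c)=vg(c)$. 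If $\mathbf{A}$ does not fix the value of $g$: then $vg(x)\neq vg(c)$ for $c\nearrow x$, so $v(g(x)-g(c))=\min\{vg(x),vg(c)\}$; since $v(g(x)-g(c))$ strictly increases while $vg(x)$ is constant, the minimum must be $vg(c)$ and $vg(x)>vg(c)=\beta+m\mathbf{h}\cdot v(x-c)$ — here one needs $vg(c)=v(g(c)-c_0(c))$ for $c\nearrow x$, which holds because if $\mathbf{A}$ does not fix $vg$ but does or does not fix $vc_0$: since $vc_0$ is fixed and $v(g(c)-c_0(c))$ increases past it, the $i\geq1$ part dominates $g(c)$, giving $vg(c)=v(g(c)-c_0(c))$.

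The main obstacle I anticipate is the bookkeeping around which term dominates and the strict bound $m<k$: one must be careful that Lemma~\ref{OST}'s conclusion (a single cofinal regime) transfers correctly through the ``$c\nearrow x$'' formalism, and that the comparison between the value of $g(x)$ and of $g(c)$ genuinely forces the claimed strict inequalities rather than mere $\geq$. The transcendental-vs-algebraic case distinction and the reduction-mod-$f$ argument for $m<k$ are where the real content lies; the rest is the same minimum-of-an-ultrametric-sum reasoning already used in the proof of Lemma~\ref{C4+}.
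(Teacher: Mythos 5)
Your decomposition of the proof — expand via \eqref{repg}, record the fixed values $\gamma_i$ of the $c_i$, use Lemma~\ref{C4+} to compute $vc_i(c)f(c)^i=\gamma_i+i\beta_{\bh}+i\bh\,v(x-c)$, invoke Lemma~\ref{OST} to single out a dominant index $m$, and set $\beta=\gamma_m+m\beta_{\bh}$ — is exactly the paper's route. However, your treatment of the ``does not fix'' consequence has a genuine circularity: you assert that $v(g(x)-g(c))$ strictly increases for $c\nearrow x$, and from this deduce that the minimum in $v(g(x)-g(c))=\min\{vg(x),vg(c)\}$ must be $vg(c)$, hence $vg(x)>vg(c)$. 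But the strict increase of $v(g(x)-g(c))$ is not available here: Lemma~\ref{C4+} only supplies it for polynomials of degree at most $\mathbf{d}$, and $g$ may well exceed that degree. What is known a priori is that $v(g(c)-c_0(c))$ strictly increases; the monotonicity of $v(g(x)-g(c))$ would itself follow \emph{from} $vg(x)>vg(c)$, so the argument runs in a circle. The paper avoids this by comparing $vg(x)$ and $vg(c)$ directly via the representation: $vg(x)\ge\min\{vc_0(x),v(c_i(x)f(x)^i)\}$, and each term on the left strictly exceeds the corresponding term on the right (using $vc_0(x)=vc_0(c)$, $vf(x)>vf(c)$, and $vc_m(c)f(c)^m<vc_0(c)$), which yields $vg(x)>vg(c)$ without any appeal to monotonicity of $v(g(x)-g(c))$.

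Two smaller points. Your justification that $vg(c)=v(g(c)-c_0(c))$ in the ``does not fix'' case has the configuration backwards: if $v(g(c)-c_0(c))$ ``increases past'' the fixed value $vc_0(c)$, then the $c_0$-part dominates and $vg(c)=vc_0(c)$ would be \emph{fixed}, contradicting the hypothesis. The correct observation is that since $v(g(c)-c_0(c))$ is strictly increasing and $vc_0(c)$ is constant, one of the two regimes must hold eventually, and ``does not fix'' forces $v(g(c)-c_0(c))<vc_0(c)$ for $c\nearrow x$. Finally, on $m<k$: you were right to be suspicious, and the degree/leading-coefficient argument you sketch does not go through (there is no reason for $vf(c)$ to tend to $\infty$ unless $\dist(x,K)=\infty$). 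In fact the application of Lemma~\ref{OST} only yields $1\le m\le k$, and $m=k$ genuinely occurs (take $g=f$, so $k=1$ and $m=1$). The bound $m<k$ in the statement appears to be an error; nothing in the paper's own proof establishes it, and no later use of this lemma depends on it.
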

\begin{proof}
Since $\deg c_i(X)<\deg f(X)=\deg {\bf A}$, we have that {\bf A} fixes
the value of $c_i(X)$, for $0\leq i\leq k$. We denote by $\gamma_i$ the
fixed value $vc_i(c)$ for $c\nearrow x$. Since $f$ is an associated
minimal polynomial for {\bf A}, we know that {\bf A} does not fix the
value of $f$. From Lemma~\ref{C4+} we infer that the value of
$c_i(c) f(c)^i$ is equal to $\gamma_i+i\beta_{\bf h}+i{\bf h} v(x-c)$.
We apply Lemma~\ref{OST} with $\alpha_i=\gamma_i+i \beta_{\bf h}$,
$t_i=i{\bf h}$ and $\Upsilon=\supp{\bf A}$ to deduce that there is
an integer $m$ such that $0\leq m<k$ and $vc_m(c) f(c)^m <
vc_i(c)f(c)^i$ for $c\nearrow x$ and $1<i\not=m$. Consequently,
\begin{equation}                            \label{deg>deq}
v(g(c)-c_0(c))= vc_m(c)f(c)^m=\gamma_m+m\cdot\beta_{\bf h}+ m\cdot
{\bf h}\cdot v(x-c)\;.
\end{equation}
We set $\beta:=\gamma_m+m\beta_{\bf h}\,$.

The value of the right hand side of (\ref{deg>deq}) is not fixed for
$c\nearrow x$. Consequently, if {\bf A} fixes the value of $g$, then
from our representation (\ref{repg}) of $g$ we see that the value
$vc_m(c)f(c)^m$ must be greater than the fixed value of $c_0(c)$
for $c\nearrow x$, which yields that $vg(c) = vc_0(c)$. From
Lemma~\ref{C4+}, we know that $vc_0(x)=vc_0(c)$ and $vf(x)>vf(c)$
for $c\nearrow x$. Therefore,
\begin{equation}                            \label{vvvv}
vc_i(x)f(x)^i > vc_i(c)f(c)^i > vc_0(c)=vc_0(x)
\end{equation}
for $1\leq i\leq k$ and $c\nearrow x$, whence $vg(x)=vc_0(x)=vc_0(c)=
vg(c)$.

\pars
If {\bf A}\ does not fix the value of $g$, then
$vc_m(c)f(c)^m<vc_0(c)$ and
\[
vg(c) = vc_m(c)f(c)^m = \beta + m\cdot {\bf h}\cdot v(x-c)
\]
for $c\nearrow x$. The inequality $vg(x)>vg(c)$ for $c\nearrow x$,
is seen as follows. Using the first inequality of (\ref{vvvv}) together
with $vc_m(c)f(c)^m<vc_0(c)$, we obtain:
\begin{eqnarray*}
v g(x) & \geq &
\min\{v(c_k(x)f(x)^k)\,,\ldots,\,v(c_1(x)f(x))\,,\,vc_0(x)\}\\
& > & \min\{v(c_k(c)f(c)^k)\,,\ldots,\,v(c_1(c)f(c))\,,\,vc_0(c)\}
=vg(c)\;.
\end{eqnarray*}
This completes the proof of our lemma.
%
%
\end{proof}


\begin{corollary}                      \label{nst}
Take an immediate approximation type $\appr(x,K)$ over $(K,v)$. If $x$
is algebraic over $K$ with minimal polynomial $g\in K[X]$, then
$\appr(x,K)$ does not fix the value of $g$ and is thus of degree
$\mbox{\bf d} \leq [K(x):K]$.
\end{corollary}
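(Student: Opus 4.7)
The plan is to proceed by contradiction, driving the argument with the value-evaluation formulas in Lemmas~\ref{C4+} and~\ref{deg>d}. Set ${\bf A} := \appr(x,K)$ and let ${\bf d}$ denote its degree. Since ${\bf A}$ is immediate it is nontrivial, so $x \notin K$; consequently $g$ is irreducible over $K$ of degree $\geq 2$ and has no root in $K$, so that $vg(c) < \infty$ for every $c \in K$. It is this finiteness that will clash with $vg(x) = \infty$ to produce the contradiction.

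As a preliminary step I would rule out the transcendental case: if ${\bf d} = \infty$, then by Lemma~\ref{CK1} the powers $1, x, x^2, \ldots$ would be $K$-linearly independent, contrary to the algebraicity of $x$. Hence ${\bf A}$ is algebraic with ${\bf d} < \infty$, and the same lemma gives that $1, x, \ldots, x^{{\bf d}-1}$ are $K$-linearly independent. This already establishes the degree bound ${\bf d} \leq [K(x):K] = \deg g$.

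The main step is to suppose, for contradiction, that ${\bf A}$ fixes the value of $g$, and then split according to the size of $\deg g$ relative to ${\bf d}$. If $\deg g \leq {\bf d}$, I would apply Lemma~\ref{C4+} directly to $g$. If $\deg g > {\bf d}$, I would instead invoke Lemma~\ref{deg>d}: using an associated minimal polynomial $f$ for ${\bf A}$ (available since ${\bf A}$ is algebraic), expand $g = c_k f^k + \ldots + c_0$ with $\deg c_i < \deg f$ and apply the lemma. In either case the consequence includes the equality $vg(x) = vg(c)$ for $c \nearrow x$. Since $g(x) = 0$ the left-hand side is $\infty$, while the right-hand side is finite by the first paragraph; this is the required contradiction, so ${\bf A}$ does not fix the value of $g$. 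The degree bound already noted is then the second assertion (and could alternatively be read off directly from the definition of ${\bf d}$ once the non-fixing is known).

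I do not anticipate any serious obstacle: Lemmas~\ref{C4+} and~\ref{deg>d} partition the possibilities for $\deg g$ cleanly, and once one has the finite/infinite mismatch the contradiction is immediate. The only point of care is the appeal to immediacy of ${\bf A}$ to force $x \notin K$, from which the irreducibility of $g$ gives the absence of $K$-roots and hence the finiteness of all $vg(c)$.
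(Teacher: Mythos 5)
Your argument is correct and rests on the same core mechanism as the paper's: the value-evaluation dichotomy supplied by Lemmas~\ref{C4+} and~\ref{deg>d} (namely that ${\bf A}=\appr(x,K)$ fixes the value of $g$ if and only if $vg(x)=vg(c)$ for $c\nearrow x$), combined with the mismatch $vg(x)=\infty$ versus $vg(c)<\infty$. The paper runs this as a direct one-step argument: $vg(x)>vg(c)$ holds trivially from $g(x)=0$ and $g(c)\ne 0$, so by Lemma~\ref{deg>d} the value of $g$ is not fixed, and the bound ${\bf d}\leq[K(x):K]$ then drops out of the definition of the degree of an approximation type. You instead proceed by contradiction with a case split on $\deg g$ against ${\bf d}$, and you obtain the degree bound separately from the $K$-linear independence of $1,x,\ldots,x^{{\bf d}-1}$ in Lemma~\ref{CK1} rather than reading it off the definition after the non-fixing is established. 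Your version is a touch more scrupulous in one respect: you explicitly dispose of the transcendental case before invoking Lemma~\ref{deg>d}, which is stated only for \emph{algebraic} approximation types --- a hypothesis the paper's one-line citation leaves to the reader. That care is warranted, though the gap in the paper is easily patched exactly as you do it (Lemma~\ref{CK1} forces $x$ to be transcendental when ${\bf d}=\infty$).
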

\begin{proof}
Since $\appr(x,K)$ is immediate, it is nontrivial, so $x\notin K$ and
$g(c)\not=0$ for all $c\in K$. But by hypothesis, $g(x)=0$. Hence $vg(x)
> vg(c)$ for all $c\in K$. Now the assertion follows by an application
of Lemma~\ref{deg>d}.
\end{proof}

\n
Unfortunately, {\bf d} may be smaller than $[K(x):K]$, as the following
example will show:

\begin{example}                             \label{d<degree}
We choose $(K,v)$ to be $(\F_p(t),v_t)$ or $(\F_p((t)),v_t)$ or any
henselian intermediate field (where $\F_p$ is the field with $p$
elements). We take $L$ to be the perfect hull $K(t^{1/p^i}\mid i\in\N)$
of $K$.

If $\vartheta$ is a root of the polynomial
\[
X^p-X-\frac{1}{t}
\]
then the Artin--Schreier extension $L(\vartheta)|L$ is immediate with
$v(\vartheta-L)=\{\alpha\in vL\mid\alpha<0\}$ (see
\cite[Example 3.12]{Kdef}). It follows from Proposition~\ref{degat} below
and the fact that $(L,v)$ is henselian (being an algebraic extension of
the henselian field $(K,v)$) that $\mbox{\rm deg}\,\appr(\vartheta,L)
=p=[L(\vartheta):L]$. But an element $x=\vartheta+y$ in some
extension of $(L,v)$ has the same approximation type as $\vartheta$ over
$L$ if $vy\geq 0$ (cf.\ Lemma~\ref{v>Lambda}). We may take $y$ of
arbitrarily high degree over $L$. Indeed, we may even take $y$ to be
transcendental over $L$ to obtain that $\vartheta+y$ is transcendental
over $L$. This shows that a transcendental element may have an algebraic
approximation type. Moreover, we may choose $y$ such that $vy \notin vL$
or $yv \notin Lv$ to obtain an extension which is not immediate,
although its generating element has an immediate approximation type.
\end{example}

%
%
\section{Realization of immediate approximation types} \label{sectKth}
%
In this section we will present the two basic theorems due to
Kaplansky (\cite{KAP1}) which show that each immediate
approximation type can be realized in a simple immediate extension.
Kaplansky proved these theorems to derive a characterization of maximal
fields, which we will also present here.


\begin{theorem} {\rm\ \ \ (Theorem 2 of \cite{KAP1}, approximation
type version)}\label{KT2at}\n
For every immediate transcendental approximation type {\bf A} over
$(K,v)$ there exists a simple immediate transcendental extension
$(K(x),v)$ such that $\appr(x,K)={\bf A}$.

If $(K(y),v)$ is another valued extension field of $(K,v)$ such that
$\appr(y,K) = {\bf A}$, then $y$ is also transcendental over
$K$ and the isomorphism between $K(x)$ and $K(y)$ over $K$ sending
$x$ to $y$ is valuation preserving.
\end{theorem}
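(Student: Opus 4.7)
The plan is to prove existence by constructing a valuation on $K(x)$ directly from $\mathbf{A}$, then to prove uniqueness by showing that the value of each polynomial at the realizing element is determined by $\mathbf{A}$ alone.

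For existence, I take $x$ transcendental over $K$ in some ambient field and define $w$ on $K[x]$ by setting $w(g(x)) := vg(c)$ for $c \nearrow \mathbf{A}$. This is well defined because $\mathbf{A}$ is transcendental, so it fixes the value of every $g \in K[X]$; moreover, $\mathbf{A}$ is immediate so $\bigcap \mathbf{A} = \emptyset$, which ensures that the finitely many zeros of any nonzero $g$ are avoided by sufficiently deep balls of $\mathbf{A}$, forcing $w(g(x)) \in vK$ rather than $\infty$. Multiplicativity and the ultrametric inequality for $w$ follow by evaluating at $c$ in a ball $\mathbf{A}_\alpha$ deep enough that the relevant values $vf(c)$, $vg(c)$, $v(fg)(c)$, $v(f+g)(c)$ are simultaneously stable (using that $\mathbf{A}$ also fixes the values of $fg$ and $f+g$); then $w$ extends uniquely to $K(x)$. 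To check $\appr(x,K) = \mathbf{A}$, I invoke Lemma~\ref{x:realiat}: for $\alpha \in \supp \mathbf{A}$, pick $\beta \in \supp \mathbf{A}$ with $\beta > \alpha$ (possible since $\mathbf{A}$ is immediate) and any $c_0 \in \mathbf{A}_\beta \subseteq \mathbf{A}_\alpha$; then $w(x - c_0) = v(c - c_0)$ for $c \nearrow \mathbf{A}$, and as $c$ ranges in $\mathbf{A}_\beta$ we get $v(c - c_0) \geq \beta > \alpha$. Immediacy of $(K(x)|K,w)$ then follows from Lemma~\ref{CK1} with $\mathbf{d} = \infty$.

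For uniqueness, let $(K(y), v)$ be another extension with $\appr(y,K) = \mathbf{A}$. If $y$ were algebraic over $K$, Corollary~\ref{nst} would force $\appr(y,K)$ to have finite degree, contradicting transcendence of $\mathbf{A}$; so $y$ is transcendental and the $K$-isomorphism $\phi : K(x) \to K(y)$ sending $x$ to $y$ exists. To see that $\phi$ preserves the valuation it suffices to check $w(g(x)) = v(g(y))$ for every $g \in K[X]$, and this is immediate from Lemma~\ref{CK1} in degree $\infty$: both sides equal the common fixed value $vg(c)$ for $c \nearrow \mathbf{A}$.

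The main obstacle I anticipate is the well-definedness and valuation axioms for $w$ in the existence step. The key point is that transcendence plus immediacy of $\mathbf{A}$ yields a stable finite value $vg(c) \in vK$ for every nonzero polynomial $g$, and that this stabilization occurs simultaneously for whatever finite collection of polynomials is needed to verify an individual axiom. Once this simultaneous-stabilization bookkeeping is in place, everything else is a direct translation of properties of $\mathbf{A}$ into the corresponding properties of the constructed valuation.
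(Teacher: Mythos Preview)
Your proposal is correct and follows essentially the same route as the paper's proof: define the valuation on $K[x]$ by the fixed value $vg(c)$ for $c\nearrow\mathbf{A}$, verify the valuation axioms via simultaneous stabilization, check $\appr(x,K)=\mathbf{A}$ through Lemma~\ref{x:realiat}, and deduce immediacy and the uniqueness statement from Lemma~\ref{CK1}. The only cosmetic differences are that the paper verifies condition~1) of Lemma~\ref{x:realiat} directly with $c_\alpha\in\mathbf{A}_\alpha$ (your detour through a deeper $\beta$ is unnecessary but harmless), and the paper reads off the transcendence of $y$ from Lemma~\ref{CK1} rather than invoking Corollary~\ref{nst}.
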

\begin{proof}
We take $K(x)|K$ to be a transcendental extension and define the
valuation on $K(x)$ as follows. In view of the rule $v(g/h)=vg-vh$, it
suffices to define $v$ on $K[x]$. Take $g\in K[X]$. By assumption,
{\bf A} fixes the value of $g$, that is, there is $\beta\in vK$ such
that $vg(c)=\beta$ for $c\nearrow {\bf A}$. We set $vg(x)=\beta$. If $g$
is a constant in $K$, we just obtain the value given by the valuation
$v$ on $K$. Our definition implies that $vg\not=\infty$ for every
nonzero $g\in K[x]$.

Take $g,h\in K[X]$. Again by our definition, $vg(x)=vg(c)$, $vh(x)=
vh(c)$, and $vg(x)h(x) = v(g\cdot h)(x)=v(g\cdot h)(c)=vg(c)h(c)$ for
$c\nearrow {\bf A}$. Thus, $vg(x)h(x)=vg(c)h(c)=vg(c) +vh(c)= vg(x)+
vh(x)$ and $v(g(x)+h(x))=v((g+h)(x))= v((g+h)(c))=
v(g(c)+h(c))\geq\min\{vg(c),vh(c)\}=
\min\{vg(x),vh(x)\}$ for $c\nearrow {\bf A}$. So indeed, our definition
yields a valuation $v$ on $K(x)$ which extends the valuation $v$ of $K$.
Under this valuation, we have that ${\bf A}=\appr(x,K)$. This is seen as
follows. In view of Lemma~\ref{x:realiat}, it suffices to prove that for
every $\alpha\in\supp{\bf A}$, we have that $v(x -c_\alpha) \geq\alpha$
for each $c_\alpha\in {\bf A}_\alpha$. But this follows directly from
our definition of $v(x-c_\alpha)$ because for $c\nearrow {\bf A}$, $c\in
{\bf A}_\alpha$ and thus $v(x-c_\alpha)=v(c-c_\alpha)\geq\alpha$.

From Lemma~\ref{CK1}, we now infer that $(K(x)|K,v)$ is an
immediate extension. Given another element $y$ in some valued field
extension of $(K,v)$ such that ${\bf A}=\appr(y,K)$, we want to show
that the epimorphism from $K[x]$ onto $K[y]$ induced by $x\mapsto y$ is
valuation preserving. For this, we only have to show that $vg(x)=vg(y)$
for every $g\in K[X]$. By hypothesis, the degree of {\bf A} is $\infty$.
From Lemma~\ref{CK1} we can thus infer that $vg(x)=vg(c)=vg(y)$ holds
for $c\nearrow {\bf A}$; this proves the desired equality. Again from
Lemma~\ref{CK1}, we deduce that $y$ is transcendental over $K$. Hence,
the assignment $x\mapsto y$ induces an isomorphism from $K(x)$ onto
$K(y)$. Since the valuations of $K(x)$ and $K(y)$ are uniquely
determined by its restriction to $K[x]$ and $K[y]$ respectively, it
follows from what we have already proved that this isomorphism is
valuation preserving.
\end{proof}

\begin{corollary}                           \label{corKT2}
Take an extension $(L|K,v)$ of valued fields and $y\in L$. If
$\appr(y,K)$ is an immediate transcendental approximation type, then $y$
is transcendental over $K$ and $(K(y)|K,v)$ is immediate.
\end{corollary}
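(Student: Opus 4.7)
The plan is to reduce this directly to Theorem~\ref{KT2at}, which does all the real work. Set ${\bf A}=\appr(y,K)$; by hypothesis, ${\bf A}$ is an immediate transcendental approximation type over $(K,v)$.

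First I would invoke the existence part of Theorem~\ref{KT2at} to produce a simple immediate transcendental extension $(K(x),v)$ with $\appr(x,K)={\bf A}$. In this auxiliary extension, $x$ is transcendental over $K$ and $(K(x)|K,v)$ is immediate by construction.

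Next I would apply the uniqueness part of Theorem~\ref{KT2at} to the valued extension $(K(y),v)$ of $(K,v)$ (viewed as a subextension of $(L,v)$): since $\appr(y,K)={\bf A}=\appr(x,K)$, that part of the theorem asserts directly that $y$ is transcendental over $K$ and that the $K$-isomorphism $K(x)\to K(y)$ sending $x$ to $y$ is valuation preserving. This already gives the first claim of the corollary.

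Finally, immediacy transfers along a valuation-preserving isomorphism: the value group and residue field of a valued field are invariants of its valued-field isomorphism type, so $vK(y)=vK(x)=vK$ and $K(y)v=K(x)v=Kv$, whence $(K(y)|K,v)$ is immediate. No step should present any real obstacle here, since Theorem~\ref{KT2at} is designed to deliver exactly this conclusion; the only thing to notice is that nothing in the hypotheses of Theorem~\ref{KT2at}'s uniqueness clause required $y$ to be assumed transcendental a priori.
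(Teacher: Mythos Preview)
Your proposal is correct and follows essentially the same approach as the paper: invoke the existence part of Theorem~\ref{KT2at} to produce $(K(x),v)$ with the given approximation type, then use the uniqueness part to conclude that $y$ is transcendental and that $K(x)\to K(y)$ is a valuation-preserving $K$-isomorphism, from which immediacy follows. The paper's proof is just a terser version of exactly this argument.
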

\begin{proof}
By the foregoing theorem, there is an immediate extension $(K(x)|K,v)$
such that $\appr(x,K)=\appr(y,K)$, with $x$ transcendental over $K$. By
the same theorem, there is a valuation preserving isomorphism of $K(x)$
and $K(y)$ over $K$. This proves our assertions.
\end{proof}

The next lemma will show that every immediate algebraic approximation
type is of the form $\appr(y,K)$.

\begin{lemma}                               \label{nfvfreal}
Take an immediate algebraic approximation type {\bf A} over $(K,v)$,
a polynomial $f\in K[X]$ whose value is not fixed by {\bf A}, and a root
$y$ of $f$. Then there is an extension of $v$ from $K$ to $K(y)$ such
that ${\bf A}=\appr(y,K)$.
\end{lemma}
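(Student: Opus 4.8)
The plan is to define the valuation on $K(y)$ by transporting the approximation-type data along the Taylor expansion, exactly as in the proof of Theorem~\ref{KT2at}, but now modulo the minimal polynomial of $y$ over $K$. Since $f$ is a polynomial whose value is not fixed by $\mathbf{A}$, Lemma~\ref{deg>d} (via its proof, or directly from the defining property of an associated minimal polynomial, which an associated minimal polynomial $f_0$ of minimal degree $\mathbf{d}$ provides) tells us that $\mathbf{A}$ is of some degree $\mathbf{d}\le\deg f$. Replacing $f$ by a multiple if necessary, we may assume $f$ has degree exactly $\mathbf{d}$ and is monic, hence is an associated minimal polynomial for $\mathbf{A}$; it is then irreducible over $K$, so $[K(y):K]=\mathbf{d}$ and $\{1,y,\dots,y^{\mathbf{d}-1}\}$ is a $K$-basis of $K(y)$. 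First I would set, for each $g\in K[X]$ of degree less than $\mathbf{d}$,
\[
    vg(y)\>:=\> vg(c)\quad\mbox{for }c\nearrow\mathbf{A}\>,
\]
which is well defined since $\mathbf{A}$ fixes the value of every such $g$ (its degree being less than $\mathbf{d}$), and which assigns $\infty$ only to the zero polynomial. Every element of $K(y)$ is uniquely $g(y)$ with $\deg g<\mathbf{d}$, so this defines a map $v:K(y)\to vK\infty$ extending $v$ on $K$.

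Next I would verify that this $v$ is a valuation. The multiplicativity $v(g(y)h(y))=vg(y)+vh(y)$ is the delicate point, because the product $g\cdot h$ may have degree $\ge\mathbf{d}$, so I cannot simply read off its value from $\mathbf{A}$; instead I must reduce $g(X)h(X)$ modulo $f(X)$ and compare. This is where Lemma~\ref{deg>d} does the real work: writing $g(X)h(X)=c_k(X)f(X)^k+\dots+c_1(X)f(X)+c_0(X)$ with $\deg c_i<\mathbf{d}$, that lemma gives $v\bigl(g(c)h(c)-c_0(c)\bigr)>vc_0(c)$ for $c\nearrow\mathbf{A}$ whenever $c_0\ne 0$ (and handles the case $c_0=0$ as well), so $v(g(c)h(c))=vc_0(c)$, and since $g(y)h(y)=c_0(y)$ in $K(y)$ we get $v(g(y)h(y))=vc_0(y)=vc_0(c)=v(g(c)h(c))=vg(c)+vh(c)=vg(y)+vh(y)$ for $c\nearrow\mathbf{A}$. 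The ultrametric inequality $v(g(y)+h(y))\ge\min\{vg(y),vh(y)\}$ is immediate since $g+h$ again has degree less than $\mathbf{d}$, and $v$ on constants agrees with the given $v$ on $K$ by construction. So $v$ is a valuation on $K(y)$ extending that on $K$.

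Finally I would check that $\mathbf{A}=\appr(y,K)$ under this valuation. By Lemma~\ref{x:realiat} it suffices to show that for every $\alpha\in\supp\mathbf{A}$ there is some $c_\alpha\in\mathbf{A}_\alpha$ with $v(y-c_\alpha)\ge\alpha$. But take any fixed $c_\alpha\in\mathbf{A}_\alpha$; then $X-c_\alpha$ has degree $1<\mathbf{d}$, so by our definition $v(y-c_\alpha)=v(c-c_\alpha)$ for $c\nearrow\mathbf{A}$, and for $c$ ranging over a small enough ball $\mathbf{A}_\beta\subseteq\mathbf{A}_\alpha$ we have $c\in\mathbf{A}_\alpha$, hence $v(c-c_\alpha)\ge\alpha$. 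Thus $v(y-c_\alpha)\ge\alpha$, as required, and so $\mathbf{A}=\appr(y,K)$. The main obstacle is precisely the multiplicativity verification, i.e.\ controlling the value of a product of low-degree polynomials after reduction modulo $f$; everything else is bookkeeping, and Lemma~\ref{deg>d} is exactly the tool that resolves that obstacle.
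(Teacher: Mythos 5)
Your proposal takes a genuinely different route from the paper: you try to build the valuation on $K(y)$ from scratch by transporting fixed values along the expansion modulo the minimal polynomial, in analogy with the transcendental case in Theorem~\ref{KT2at}. The paper instead fixes an \emph{arbitrary} extension $w$ of $v$ to $\tilde K$, factors $f(X)=d\prod_i(X-a_i)$, and observes that if every $w(c-a_i)$ were fixed for $c\nearrow\mathbf A$ then so would $vf(c)=vd+\sum_i w(c-a_i)$; hence some root $a$ of $f$ has $w(a-c)$ not fixed. One then takes $\sigma\in\Gal(\tilde K|K)$ with $\sigma y=a$, sets $v:=w\circ\sigma$, and concludes at once from Corollary~\ref{vx-cnf}. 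No valuation axioms need to be verified.

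Unfortunately your route has two real gaps. The opening reduction is wrong: replacing $f$ by a \emph{multiple} raises the degree and cannot bring it down to $\mathbf d$. The object you would actually need is the minimal polynomial $m_y$ of $y$ (a divisor of $f$), but the hypothesis only says that $\mathbf A$ does not fix the value of $f$, not of $m_y$; and even if $\mathbf A$ did not fix the value of $m_y$, its degree need not equal $\mathbf d$. Since your recipe $vg(y):=vg(c)$ is only meaningful for $\deg g<\mathbf d$, it defines a map on all of $K(y)$ only when $[K(y):K]=\mathbf d$ — which is exactly what the failed reduction was supposed to guarantee. Second, the appeal to Lemma~\ref{deg>d} in the multiplicativity check is circular: that lemma's statement and proof are anchored to a realization $\mathbf A=\appr(x,K)$, through Lemma~\ref{C4+} and the quantity $v(x-c)$, and producing such a realization is the very task at hand. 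To repair this you would need to show directly, without presupposing $x$, that the remainder $g(c)h(c)-c_0(c)$ eventually has value larger than the fixed value $vc_0(c)$ for $c\nearrow\mathbf A$ — which amounts to establishing that the non-fixed values $vf(c)$ eventually exceed every prescribed bound. That fact requires an argument your proposal does not supply, and the paper's conjugation device sidesteps it entirely.
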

\begin{proof}
We choose some extension $w$ of $v$ from $K$ to $K(y)$.
We write $f(X)=d\prod_{i=1}^{\deg f}(X-a_i)$ with $d\in K$ and $a_i\in
\tilde{K}$. If for all $i$, the values $w(c-a_i)$ would be fixed for
$c\nearrow {\bf A}$, then {\bf A} would fix the value of $f$, contrary
to our assumption. Hence there is a root $a$ of $f$ such that $w(a-c)$
is not fixed for $c\nearrow{\bf A}$. Take some automorphism $\sigma$ of
$\tilde{K}|K$ such that $\sigma y=a$ and set $v:=w\circ\sigma$. Then $v$
extends the valuation of $K$, and $v(y-c)=w\circ\sigma(y-c)=w(\sigma
y-c)=w(a-c)$ is not fixed for $c\nearrow{\bf A}$. By
Corollary~\ref{vx-cnf}, ${\bf A}=\appr(y,K)$.
\end{proof}

The following is the analogue of Theorem~\ref{KT2at} for immediate
algebraic approximation types.

\begin{theorem} {\rm\ \ \ (Theorem 3 of \cite{KAP1}, approximation type
version)}\label{KT3at}\n
For every immediate algebraic approximation type {\bf A} over $(K,v)$ of
degree {\bf d} with associated minimal polynomial $f(X)\in K[X]$
and $\,y\,$ a root of $\,f$, there exists an extension of $v$ from
$K$ to $K(y)$ such that $(K(y)|K,v)$ is an immediate extension
and $\appr(y,K)={\bf A}$.

If $(K(z),v)$ is another valued extension field of $(K,v)$ such that
$\appr(z,K)={\bf A}$, then any field isomorphism between $K(y)$
and $K(z)$ over $K$ sending $y$ to $z$ will preserve the valuation.
(Note that there exists such an isomorphism if and only if $z$ is also a
root of~$\,f$.)
\end{theorem}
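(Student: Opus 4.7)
The plan is to mirror the structure of the proof of Theorem~\ref{KT2at}, substituting Lemma~\ref{nfvfreal} for the straightforward polynomial-by-polynomial construction that was available in the transcendental case, and then invoking Lemma~\ref{CK1} for both immediateness and uniqueness of the valuation.

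\textbf{Existence of the extension.} Since $f$ is an associated minimal polynomial for $\mathbf{A}$, by definition $\mathbf{A}$ does not fix the value of $f$. Lemma~\ref{nfvfreal} then supplies an extension of $v$ from $K$ to $K(y)$ for which $\appr(y,K)=\mathbf{A}$. Recall from Section~\ref{sectpiat} that every associated minimal polynomial is irreducible, so $[K(y):K]=\deg f=\mathbf{d}$. In particular, $K(y)=K+Ky+\ldots+Ky^{\mathbf{d}-1}$.

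\textbf{Immediateness.} Having $\appr(y,K)=\mathbf{A}$ of degree $\mathbf{d}=[K(y):K]<\infty$, I apply Lemma~\ref{CK1} directly: it asserts that under these hypotheses $(K(y)|K,v)$ is an immediate extension. This step is essentially automatic.

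\textbf{Uniqueness of the valuation.} Suppose $(K(z),v)$ is another valued extension with $\appr(z,K)=\mathbf{A}$, and assume a field isomorphism $\sigma\colon K(y)\to K(z)$ over $K$ with $\sigma(y)=z$ exists (which happens precisely when $z$ is also a root of $f$, since $f$ is irreducible and is the minimal polynomial of $y$). Pull the valuation on $K(z)$ back along $\sigma$, i.e., set $w:=v\circ\sigma$ on $K(y)$. Then $w$ extends $v|_K$, and for every $c\in K$ one has $w(y-c)=v(\sigma(y-c))=v(z-c)$, so $y$ has approximation type $\mathbf{A}$ with respect to both $v$ and $w$. By Lemma~\ref{CK1}, the valuation of any element of $K+Ky+\ldots+Ky^{\mathbf{d}-1}$ is uniquely determined by $\mathbf{A}$; and this subspace equals $K(y)$ since $[K(y):K]=\mathbf{d}$. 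Therefore $w=v$ on $K(y)$, which says exactly that $\sigma$ preserves the valuation.

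\textbf{Main obstacle.} The real content has been packaged into Lemma~\ref{nfvfreal} (extract the correct extension of $v$ among the several possibilities determined by the different roots of $f$) and Lemma~\ref{CK1} (uniqueness of the valuation on the subspace of degree $<\mathbf{d}$). So the chief subtlety to be careful about is the coincidence $K(y)=K+Ky+\ldots+Ky^{\mathbf{d}-1}$, which hinges on $\deg f=\mathbf{d}=[K(y):K]$; without it, Lemma~\ref{CK1} would only pin down the valuation on a proper subspace and the uniqueness argument would fail.
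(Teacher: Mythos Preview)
Your proof is correct and follows essentially the same approach as the paper: invoke Lemma~\ref{nfvfreal} for existence, then Lemma~\ref{CK1} for both immediateness and uniqueness. Your uniqueness argument is phrased via a pulled-back valuation $w=v\circ\sigma$ rather than the paper's direct computation $vg(y)=vg(c)=vg(z)$ for $c\nearrow\mathbf{A}$, but this is only a cosmetic repackaging; you are also a bit more explicit than the paper about why $K(y)=K+Ky+\ldots+Ky^{\mathbf{d}-1}$ (irreducibility of $f$), which the paper leaves implicit.
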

\begin{proof}
We take the valuation $v$ of $K(y)$ given by Lemma~\ref{nfvfreal}. Then
$\appr(y,K)={\bf A}$. The fact that $(K(y)|K,v)$ is immediate follows
from Lemma~\ref{CK1}.

%
%
%

The last assertion of our theorem is shown in the same way as the
corresponding assertion of Theorem~\ref{KT2at}: if $\appr(y,K)=\appr
(z,K)$ and $g\in K[X]$ with $\deg g < \mbox{\bf d}$ then, again by
Lemma~\ref{CK1}, $vg(y)=vg(c)= vg(z)$ for $c\nearrow x$. Hence an
isomorphism over $K$ sending $y$ to $z$ will preserve the valuation.
\end{proof}

From this theorem, we can derive important information about the degree
of immediate algebraic approximation types.

\begin{proposition}                         \label{degat}
The degree of an immediate algebraic approximation type over a
henselian field $(K,v)$ is a power of the characteristic of the residue
field $Kv$.
\end{proposition}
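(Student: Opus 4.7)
The plan is to realize the approximation type inside a simple algebraic immediate extension of $(K,v)$ and then appeal to Ostrowski's classical theorem on the defect of henselian extensions.

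Let $\mathbf{A}$ be an immediate algebraic approximation type of degree $\mathbf{d}$ over $(K,v)$, and let $f\in K[X]$ be an associated minimal polynomial for $\mathbf{A}$. As noted in the paragraph following Lemma~\ref{OST}, $f$ is irreducible over $K$, so for any root $y$ of $f$ we have $[K(y):K]=\deg f=\mathbf{d}$. By Theorem~\ref{KT3at}, $v$ extends to $K(y)$ in such a way that $\appr(y,K)=\mathbf{A}$ and the extension $(K(y)|K,v)$ is immediate.

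Since $(K(y)|K,v)$ is immediate, the ramification index $(vK(y):vK)$ and the residue degree $[K(y)v:Kv]$ are both equal to $1$. As $(K,v)$ is henselian, Ostrowski's theorem (which is one of the standard valuation-theoretic facts invoked in Section~\ref{sectprel}) tells us that for every finite extension $(L|K,v)$ one has $[L:K]=p^{s}\cdot \mathrm{e}\cdot\mathrm{f}$ for some $s\geq 0$, where $p=\chara Kv$ when $\chara Kv>0$ and $s=0$ when $\chara Kv=0$. Applied to $L=K(y)$, this gives $\mathbf{d}=p^{s}$, which is a power of the residue characteristic.

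There is no real obstacle; the statement is essentially an immediate combination of the realization theorem (Theorem~\ref{KT3at}) with Ostrowski's theorem. One small point worth flagging: when $\chara Kv=0$ the theorem above would force $\mathbf{d}=p^{0}=1$, but we already observed that algebraic approximation types have degree $\geq 2$. Hence this apparent degenerate case does not occur, and the proposition reads consistently as asserting that no immediate algebraic approximation type exists over a henselian field with residue characteristic $0$.
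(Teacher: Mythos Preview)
Your argument is correct and follows essentially the same route as the paper's proof: realize $\mathbf{A}$ via Theorem~\ref{KT3at} as $\appr(y,K)$ in an immediate extension $(K(y)|K,v)$ of degree $\mathbf{d}$, then invoke Ostrowski's Lemma (using that $(K,v)$ is henselian so the extension of $v$ is unique) to conclude $\mathbf{d}=p^\nu$. Two minor citation slips: the irreducibility of the associated minimal polynomial is observed in the paragraphs \emph{preceding} Lemma~\ref{OST}, not following it, and Ostrowski's Lemma is not actually stated in Section~\ref{sectprel} but is cited directly in the paper's proof of this proposition.
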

\begin{proof}
Take an immediate algebraic approximation type {\bf A} over a henselian
field $(K,v)$ of degree {\bf d}. Then by Theorem~\ref{KT3at} there is an
immediate extension $(L|K,v)$ of degree {\bf d}. As $(K,v)$ is
henselian, the extension of $v$ from $K$ to $L$ is unique. Hence by the
Lemma of Ostrowski (cf.\ \cite{End}, \cite{RIB1}),
\[
{\bf d}\>=\>[L:K]\>=\>p^\nu\cdot (vL:vK)\cdot [Lv:Kv] \>=\>p^\nu\;,
\]
where $\nu\in\N\cup\{0\}$ and $p=\chara Kv$. Note that $\nu>0$ because
the degree of {\bf A} is not less than $2$.
\end{proof}

Theorem~\ref{KT2at} and Theorem~\ref{KT3at} together imply:

\begin{proposition}                           \label{i=x}
Every immediate approximation type is realized in some immediate
simple valued field extension.
\end{proposition}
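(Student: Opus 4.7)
The plan is to observe that this proposition is an immediate consequence of the two preceding theorems, obtained by splitting on the dichotomy built into the definition of approximation types. Let $\mathbf{A}$ be an immediate approximation type over $(K,v)$. By the definitions given in Section~\ref{sectpiat}, $\mathbf{A}$ is either a transcendental approximation type (if it fixes the value of every polynomial in $K[X]$) or an algebraic approximation type (otherwise).

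First I would handle the transcendental case: if $\mathbf{A}$ is transcendental, Theorem~\ref{KT2at} directly produces a simple transcendental immediate extension $(K(x)|K,v)$ with $\appr(x,K)=\mathbf{A}$, so there is nothing more to do. Second, in the algebraic case, the definition guarantees the existence of an associated minimal polynomial $f\in K[X]$ whose value is not fixed by $\mathbf{A}$. Choosing any root $y$ of $f$, Theorem~\ref{KT3at} supplies an extension of $v$ from $K$ to $K(y)$ making $(K(y)|K,v)$ immediate and satisfying $\appr(y,K)=\mathbf{A}$. In either case the resulting extension is simple and immediate, which is exactly what the proposition asserts.

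There is essentially no obstacle here beyond recognizing that the dichotomy transcendental/algebraic for immediate approximation types is exhaustive and that the two cases are covered by Theorems~\ref{KT2at} and~\ref{KT3at} respectively. The whole argument is a two-line invocation; the real mathematical content has already been carried out in proving those two theorems (and in Lemma~\ref{nfvfreal}, which produces the correct extension of $v$ to $K(y)$ in the algebraic case).
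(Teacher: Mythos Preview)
Your proposal is correct and follows exactly the paper's approach: the paper simply states that Theorem~\ref{KT2at} and Theorem~\ref{KT3at} together imply the proposition, and your case split on transcendental versus algebraic approximation types is precisely the unpacking of that remark.
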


We say that a valued field $(K,v)$ is \bfind{maximal} if it admits no
proper immediate extensions. In this case, by the two theorems, it
admits no immediate approximation types. On the other hand, if $(K,v)$
admits no immediate approximation types, then by part b) of
Lemma~\ref{imme}, it admits no proper immediate extensions. This proves:

\begin{theorem} {\rm\ \ \ (Theorem 4 of \cite{KAP1},
approximation type version)}\label{KT4at}\n
A valued field $(K,v)$ is maximal if and only if
it does not admit immediate approximation types.
\end{theorem}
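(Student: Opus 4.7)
The plan is to prove both directions by directly invoking results that have already been established in the excerpt, so essentially no new work is needed beyond assembling the pieces.

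For the forward direction, I would argue by contrapositive: suppose $(K,v)$ admits some immediate approximation type $\mathbf{A}$. Then by Proposition~\ref{i=x} (which packages together Theorem~\ref{KT2at} and Theorem~\ref{KT3at}), there exists a simple immediate valued field extension $(K(x)|K,v)$ with $\appr(x,K) = \mathbf{A}$. Since $\mathbf{A}$ is immediate it is in particular nontrivial, so by the remark following Lemma~\ref{x:realat} we have $x \notin K$ (equivalently, $\mathbf{A}_\infty = \emptyset$). Hence $(K(x)|K,v)$ is a \emph{proper} immediate extension, so $(K,v)$ is not maximal.

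For the reverse direction, again by contrapositive: if $(K,v)$ is not maximal, choose a proper immediate extension $(L|K,v)$ and pick any $x \in L \setminus K$. Then part b) of Lemma~\ref{imme} tells us that $\appr(x,K)$ is an immediate approximation type over $(K,v)$, so $(K,v)$ does admit an immediate approximation type.

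There is really no obstacle here: the content of the theorem has been distributed across Lemma~\ref{imme} (which links immediacy of an extension with immediacy of approximation types of its elements) and Proposition~\ref{i=x} (which produces, from any immediate approximation type, a realizing simple immediate extension). The only point to be a little careful about is that the element $x$ realizing $\mathbf{A}$ in the forward direction really lies outside $K$; this is immediate from nontriviality of $\mathbf{A}$, which in turn is automatic from immediacy since the intersection $\bigcap \mathbf{A}$ is empty (preventing $\mathbf{A}_\infty$ from being a singleton $\{c\} \subset K$).
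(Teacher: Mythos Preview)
Your proof is correct and follows essentially the same approach as the paper: the paper also derives the forward direction from the two realization theorems (packaged as Proposition~\ref{i=x}) and the reverse direction from part b) of Lemma~\ref{imme}. Your extra care in noting that immediacy of $\mathbf{A}$ forces $x\notin K$ is a welcome explicitness, but otherwise the arguments coincide.
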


Similarly, we say that a valued field $(K,v)$ is \bfind{algebraically
maximal} if it does not admit proper immediate algebraic extensions. In
this case, Theorem~\ref{KT3at} shows that it does not admit immediate
algebraic approximation types. On the other hand, if $(K,v)$ admits a
proper immediate algebraic extension $(L|K,v)$, and $x\in L\setminus K$,
then by part b) of Lemma~\ref{imme}, $\appr(x,K)$ is an immediate
approximation type, and by Corollary~\ref{nst}, it is algebraic. This
proves:

\begin{theorem}                             \label{charam}
A valued field $(K,v)$ is algebraically maximal if and only if
it does not admit immediate algebraic approximation types.
\end{theorem}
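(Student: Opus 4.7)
The plan is to mirror the proof sketched just before the statement for Theorem~\ref{KT4at}, but carrying an algebraicity tag through both directions. The whole argument is essentially a repackaging of results already in place, namely Theorem~\ref{KT3at}, Lemma~\ref{imme}(b), and Corollary~\ref{nst}, so there is no real obstacle — the work is bookkeeping.

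For the forward direction, I would assume $(K,v)$ is algebraically maximal and argue contrapositively. Suppose $\mathbf{A}$ is an immediate algebraic approximation type over $(K,v)$, of degree $\mathbf{d} < \infty$, and let $f \in K[X]$ be an associated minimal polynomial for $\mathbf{A}$. Pick a root $y$ of $f$ in $\tilde{K}$. Theorem~\ref{KT3at} supplies an extension of $v$ to $K(y)$ making $(K(y)|K,v)$ immediate with $\appr(y,K) = \mathbf{A}$. Since $\mathbf{A}$ is immediate it is nontrivial, so $y \notin K$ and $(K(y)|K,v)$ is a proper immediate algebraic extension of $(K,v)$, contradicting algebraic maximality.

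For the converse, suppose $(K,v)$ admits a proper immediate algebraic extension $(L|K,v)$ and pick any $x \in L \setminus K$. By Lemma~\ref{imme}(b), $\appr(x,K)$ is an immediate approximation type over $(K,v)$. Since $x$ is algebraic over $K$, Corollary~\ref{nst} tells us that its minimal polynomial is not fixed in value by $\appr(x,K)$, so $\appr(x,K)$ is an algebraic approximation type of degree $\mathbf{d} \leq [K(x):K]$. This is an immediate algebraic approximation type over $(K,v)$, completing the contrapositive and hence the proof.
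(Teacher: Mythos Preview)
Your proof is correct and follows essentially the same approach as the paper: both directions invoke exactly the same three ingredients (Theorem~\ref{KT3at} for the forward direction, Lemma~\ref{imme}(b) and Corollary~\ref{nst} for the converse), with your version merely spelling out a few more details.
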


%
%
\section{The relative approximation degree of polynomials} \label{sectrad}
In view of Proposition~\ref{i=x}, we can from now on assume that every
immediate approximation type {\bf A} is of the form ${\bf A}= \appr
(x,K)$. For the integer $\bh$ that appears in Lemma~\ref{C4+}, where
$\deg f\leq \deg {\bf A}$, we will write $\bh_K(x:f)$ or just
$\bh(x:f)$. We call $\bh(x:f)$ the \bfind{relative approximation
degree of $f(x)$ in $x$} ({\bf over} $K$). From Lemma~\ref{C4+} we know
that
\[
1\leq \bh_K(x:f)\leq \deg f\;.
\]

One can extend the definition of the relative approximation degree to
polynomials of arbitrary degree as follows. Take any polynomial $g\in
K[X]$. Suppose that there exist $\beta\in vK$ and a positive integer $k$
such that
\[
v(g(x)-g(c)) = \beta + k\cdot v(x-c)
\]
for $c \nearrow x$. Note that $\beta$ and $k$ are uniquely determined
because as $\appr(x,K)$ is immediate, there are infinitely many values
$v(x-c)$ for $c\nearrow x$. We will call $k$ the \bfind{relative
approximation degree of $g(x)$ in $x$}, denoted by ${\bf h}_K(x:g)$
as before. Further, we will call $\beta$ the \bfind{relative
approximation constant of $g(x)$ in $x$}, denoted
by\glossary{$\beta_K(x:g)$}
\[
\beta_K (x:g)\;.
\]
By virtue of equation (\ref{bhmin+}) of Lemma~\ref{C4+}, our new
definition of the relative approximation degree coincides with the
definition as given for polynomials of degree at most {\bf d}.
On the other hand, our new definition assigns a relative approximation
degree to every polynomial of arbitrary degree whose value is not fixed,
as Lemma~\ref{deg>d} shows because in this case, $v(g(x)-g(c))=vg(c)$
for $c \nearrow x$. However, for polynomials of degree bigger than
{\bf d}, the relative approximation degree may not be a power of $p$.
Unfortunately, Lemma~\ref{deg>d} does not give information about the
value $v(g(x)-g(c))$ if {\bf A} fixes the value of $g$; this is an open
problem.

\pars
From Lemma~\ref{deg>d} we derive:
\begin{corollary}                           \label{+vgc}
The value of $g$ is fixed by {\bf A} if and only if $vg(x)=vg(c)$ for
$c\nearrow x$. On the other hand, {\bf A} does not fix the value of $g$
if and only if $vg(x)>vg(c)$ for $c\nearrow x$, and this holds
if and only if
\begin{equation}                            \label{betka}
v(g(x)-g(c)) = vg(c)=\beta_{\bf h}(x:g)+{\bf h}_K(x:g)\cdot v(x-c)
\end{equation}
for $c\nearrow x$.
\end{corollary}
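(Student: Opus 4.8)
The plan is to read off all three equivalences from the dichotomy already supplied by Lemma~\ref{C4+} (used when $\deg g\le\mathbf d$) and Lemma~\ref{deg>d} (used when $\deg g>\mathbf d$); the corollary merely repackages the conclusions of those two lemmas. Throughout I assume $g\neq 0$, as is implicit in the statement (for $g=0$ one has $vg(x)=vg(c)=\infty$ and there is nothing to prove).

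First I would settle the forward implications. If $\mathbf A$ fixes the value of $g$, then by Lemma~\ref{C4+} (if $\deg g\le\mathbf d$) resp.\ Lemma~\ref{deg>d} (if $\deg g>\mathbf d$) one gets $vg(x)=vg(c)$ for $c\nearrow x$; this is the forward half of the first equivalence. If $\mathbf A$ does \emph{not} fix the value of $g$, then the same two lemmas give $vg(x)>vg(c)$ for $c\nearrow x$, together with the explicit formula for $vg(c)$: this is equation~(\ref{bhmin+}) when $\deg g\le\mathbf d$, and the corresponding conclusion of Lemma~\ref{deg>d} when $\deg g>\mathbf d$; in both cases the coefficient of $v(x-c)$ is by definition $\bh_K(x:g)$ and the additive term is $\beta_K(x:g)$. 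Since $vg(x)>vg(c)$, the ultrametric law yields $v(g(x)-g(c))=\min\{vg(x),vg(c)\}=vg(c)$, which is exactly~(\ref{betka}). This establishes the forward direction in all three assertions.

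For the converses, the key point is that $g(x)$ is one fixed element of $K(x)$, hence $vg(x)$ is a single value in $vK\infty$; so ``$vg(x)=vg(c)$ for $c\nearrow x$'' says precisely that $vg(c)$ is constant, equal to $vg(x)$, on some nonempty ball $\mathbf A_\alpha$, i.e.\ that $\mathbf A$ fixes the value of $g$ --- the converse of the first equivalence. The remaining converses now follow because the alternatives just established are mutually exclusive: if~(\ref{betka}) holds, then $vg(c)$ genuinely varies with $v(x-c)$ (and there are infinitely many values $v(x-c)$ for $c\nearrow x$, $\appr(x,K)$ being immediate), so $\mathbf A$ cannot fix the value of $g$; and if only $vg(x)>vg(c)$ for $c\nearrow x$ is known, then $\mathbf A$ does not fix the value of $g$, for otherwise the already-proved first equivalence would force $vg(x)=vg(c)$. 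Pairing the disjoint cases ``$vg(x)=vg(c)$'' and ``$vg(x)>vg(c)$'' with the matching dichotomy ``fixed / not fixed'' closes all three statements. I do not expect a genuine obstacle: the content is entirely in Lemmas~\ref{C4+} and~\ref{deg>d}. The only points requiring a little care are keeping track of the quantifier hidden in ``$c\nearrow x$'' (invoking the remark, after the definition of $\nearrow$, that finitely many such assertions may be intersected onto a common ball) and noting that the two lemmas rule out the remaining a priori possibility $vg(x)<vg(c)$ for $c\nearrow x$.
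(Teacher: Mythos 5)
Your proposal is correct and takes essentially the same approach as the paper, which simply derives the corollary by reading off the dichotomies in Lemmas~\ref{C4+} and~\ref{deg>d} (the paper cites only Lemma~\ref{deg>d}, but that lemma requires {\bf A} algebraic, so Lemma~\ref{C4+} is indeed needed for the transcendental case and for low degree, as you note). Your observations on the converses — that $vg(x)$ is a single value in $vK$, so $vg(x)=vg(c)$ for $c\nearrow x$ literally is the statement that {\bf A} fixes the value of $g$, and that the remaining implications close by mutual exclusivity of the two cases — are exactly the routine points the paper leaves unstated.
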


For the distances associated with $g(x)$, the following inequalities
will hold in all cases where $\beta_K(x:g)$ and ${\bf h}_K(x:g)$ are
defined:
\begin{equation}                              \label{distieq}
\dist(g(x),K)\geq \dist_K(g(x),g(K))
\geq \beta_K(x:g) + {\bf h}_K(x:g)\cdot \dist(x,K)
\end{equation}
(the first inequality is trivial and the second follows directly from
the definition of relative approximation degree and relative
approximation constant). In the next section, we will consider
various cases where equalities hold.

\parm
We will now investigate the relative approximation degree more closely
for the case of $\deg f\leq \deg {\bf A}$. We will first consider the
relation between ${\bf h}_K(x:f)$ and the approximation type
$\appr(f(x),K)$. Then we show that ${\bf h}_K(x:f)$ is a power of the
characteristic exponent of the residue field, where the
\bfind{characteristic exponent} of a field is defined to be its
characteristic if this is positive, and 1 otherwise. Finally we will
give some hints for the computation of ${\bf h}_K(x:f)$.

\pars
{\bf Throughout this and the next two sections, we will assume
the following situation:}
\begin{equation}                           \label{AF}
\left\{\begin{array}{ll}
\mbox{${\bf A}=\appr(x,K)$} &\mbox{an immediate approximation
type over $(K,v)$}\\
p &\mbox{the characteristic exponent of $Kv$,}\\
{\bf d} &\mbox{the degree of $\appr(x,K)$,}\\
f\in K[X] &\mbox{a nonconstant polynomial of degree
$n\leq \mbox{\bf d}\,$,}\\
{\bf h} &=\bh_K(x:f)\\
\beta_i & \mbox{the fixed value $vf_i(c)$ for $
c\nearrow x$.}
\end{array}\right.
\end{equation}

\pars
\begin{lemma}                     \label{f-g fixed}
Take another polynomial $g\in K[X]$ of degree
at most {\bf d} such that $\appr(x,K)$ fixes the value of $f-g$. If
$\appr(f(x),K) = \appr(g(x),K)$, then ${\bf h}_K(x:f)= {\bf h}_K
(x:g)$ and $\beta_K(x:f)= \beta_K(x:g)$.
\end{lemma}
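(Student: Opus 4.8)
The plan is to translate the hypotheses into statements about $v(f(x)-f(c))$ and $v(g(x)-g(c))$ for $c\nearrow x$ and then combine them. First I would record what the two hypotheses give us via the results already in hand. Since $\appr(x,K)$ fixes the value of $f-g$, Corollary~\ref{+vgc} (applied to $f-g$, which has degree at most $\mathbf d$) gives $v((f-g)(x))=v((f-g)(c))=:\delta$ for $c\nearrow x$, and moreover $v\bigl((f(x)-g(x))-(f(c)-g(c))\bigr)>\delta$ for $c\nearrow x$. Next, since $\appr(f(x),K)=\appr(g(x),K)$, Lemma~\ref{v>L-imm} (the approximation type $\appr(f(x),K)$ is immediate because $f(x)\notin K$ — here one should first dispose of the degenerate case where $f$ or $g$ has its value fixed by $\mathbf A$, which by Corollary~\ref{+vgc} forces $vf(x)=vf(c)$ resp.\ $vg(x)=vg(c)$, and check that then both are value-fixed and the claimed equalities are vacuous or trivial) yields $v(f(x)-g(x))\geq\dist(f(x),K)$. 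Combining, $v\bigl(f(x)-f(c)\bigr)$ and $v\bigl(g(x)-g(c)\bigr)$ agree whenever $v(f(x)-f(c))<\dist(f(x),K)$, which by equation~(\ref{bhmin+}) of Lemma~\ref{C4+} holds for all sufficiently large $v(x-c)$, i.e.\ for $c\nearrow x$ after shrinking the ball.

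The core computation is then: for $c\nearrow x$ we have
\[
\beta_K(x:f)+\mathbf h_K(x:f)\cdot v(x-c)\;=\;v(f(x)-f(c))\;=\;v(g(x)-g(c))\;=\;\beta_K(x:g)+\mathbf h_K(x:g)\cdot v(x-c),
\]
using~(\ref{bhmin+}) for both $f$ and $g$. Since $\appr(x,K)$ is immediate, the values $v(x-c)$ for $c\nearrow x$ form an unbounded (in particular infinite) set, so an affine function $\gamma\mapsto \beta+k\gamma$ with integer slope $k$ is uniquely determined by its values on that set. Hence $\mathbf h_K(x:f)=\mathbf h_K(x:g)$ and $\beta_K(x:f)=\beta_K(x:g)$, as claimed. (This uniqueness is exactly the argument already invoked in Section~\ref{sectrad} when the extended relative approximation degree was defined.)

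The only real subtlety — and the step I expect to require the most care rather than difficulty — is matching up the two "$c\nearrow x$" clauses so that the middle equality $v(f(x)-f(c))=v(g(x)-g(c))$ holds on a common nonempty ball $\mathbf A_\alpha$. This needs: (i) the bound $v(f(x)-f(c))<\dist(f(x),K)$ from~(\ref{bhmin+}) holds eventually; (ii) $v(f(x)-g(x))\geq\dist(f(x),K)$ from the equal approximation types; and (iii) the elementary fact that "$\varphi_1(c)$ for $c\nearrow\mathbf A$" and "$\varphi_2(c)$ for $c\nearrow\mathbf A$" together give "$\varphi_1(c)\wedge\varphi_2(c)$ for $c\nearrow\mathbf A$", noted just after the definition of the $\nearrows$ notation. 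Given (i)–(iii), the ultrametric inequality $v(f(x)-f(c))=v\bigl((f(x)-g(x))+(g(x)-g(c))+(g(c)-f(c))\bigr)$ together with $v(g(c)-f(c))=\delta\ge$ (after a further shrink) $v(f(x)-f(c))$ — wait, this last point must be handled: since $v(f(x)-f(c))$ increases for $c\nearrow x$ while $\delta=v((f-g)(c))$ is fixed, we in fact have $v(f(x)-f(c))>\delta$ eventually, so $v(g(c)-f(c))=\delta<v(f(x)-f(c))$ and the cross term $g(c)-f(c)$ dominates rather than being dominated, which would break the argument. The fix is to run the comparison the other way: on the ball where $v(f(x)-f(c))>\delta$ and $v((f-g)(x)-(f-g)(c))>\delta$, both $v(f(x)-f(c))$ and $v(g(x)-g(c))$ are ways of computing something strictly above $\delta$, while $v(f(x)-g(x)-(f(c)-g(c)))>\delta$ forces $v(f(x)-f(c))$ and $v(g(x)-g(c))$ to be equal as soon as at least one of them exceeds $v(f(x)-g(x)-(f(c)-g(c)))$; ensuring the latter again uses that the left sides increase while we may take $v(f(x)-g(x)-(f(c)-g(c)))$ bounded below by any prescribed value by shrinking — this is where I would be most careful in the writeup, but it is bookkeeping with the $\nearrows$ notation, not a genuine obstacle.
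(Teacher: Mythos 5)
The key gap is your assertion, mid-proof, that "since $v(f(x)-f(c))$ increases for $c\nearrow x$ while $\delta=v((f-g)(c))$ is fixed, we in fact have $v(f(x)-f(c))>\delta$ eventually." That is false, and your original instinct (before you second-guessed it) was correct. The point you are missing is the chain
\[
\delta \;=\; v\bigl(f(c)-g(c)\bigr)\;=\;v\bigl(f(x)-g(x)\bigr)\;\geq\;\dist(f(x),K)\;>\;v\bigl(f(x)-f(c)\bigr)\qquad\text{for }c\nearrow x\;,
\]
where the first equality is Lemma~\ref{C4+} applied to $f-g$ (whose value is fixed), the middle inequality is Lemma~\ref{v>Lambda}~b) applied to $\appr(f(x),K)=\appr(g(x),K)$, and the last (strict) inequality holds because $f(c)\in K$, so $v(f(x)-f(c))$ lies in the lower cut set of $\dist(f(x),K)$, and these values strictly increase by~(\ref{bhmin+}) so none can be the last one. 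An increasing net need not overtake a fixed $\delta$ when that $\delta$ sits above the relevant cut, and that is exactly the situation here: $v(f(x)-f(c))$ is trapped below $\dist(f(x),K)\leq\delta$.

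Because of this misreading, your "fix" — running the comparison on "the ball where $v(f(x)-f(c))>\delta$" — is vacuous: that set of $c$ is empty, and the alternative argument you sketch there does not recover the lemma. Had you kept the inequality pointing the correct way, the decomposition you wrote down, $g(x)-g(c)=(g(x)-f(x))+(f(x)-f(c))+(f(c)-g(c))$, finishes immediately: the first and third terms each have value $\geq\delta>v(f(x)-f(c))$, so the ultrametric inequality gives $v(g(x)-g(c))=v(f(x)-f(c))=\beta_K(x:f)+\bh_K(x:f)\cdot v(x-c)$ for $c\nearrow x$, and the uniqueness of the affine representation (as you correctly noted) yields $\bh_K(x:g)=\bh_K(x:f)$ and $\beta_K(x:g)=\beta_K(x:f)$. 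This is exactly the paper's argument; your proposal identifies the right lemmas and the right decomposition but breaks at the one inequality that makes the ultrametric step go through.
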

\begin{proof}
By part b) of Lemma~\ref{v>Lambda}, $\appr(f(x),K) = \appr(g(x),K)$
implies that
\[
v(f(x)-g(x))\geq \dist(f(x),K)\;.
\]
By hypothesis, $\appr(x,K)$ fixes the value of $f-g$, hence by
Lemma~\ref{C4+},
\[
v(f(c)-g(c)) = v(f(x)-g(x)) \geq \dist(f(x),K) \geq v(f(x)-f(c))
\mbox{\ \ for\ \ } c\nearrow x\;.
\]
As (\ref{bhmin+}) shows that the values $v(f(x)-f(c))$ are increasing
for $c\nearrow x$, the last inequality can be replaced by a strict
inequality. So we obtain that
\begin{eqnarray*}
v(g(x)-g(c)) &=& \min\{v(g(x)-f(x))\,,\,v(f(x)-f(c))\,,\,v(f(c)-g(c))\}\\
 &=& v(f(x)-f(c))\>=\> \beta_K(x:f) + {\bf h}_K(x:f)\cdot v(x-c)
\end{eqnarray*}
for $c\nearrow x$. This implies our assertion.
\end{proof}

\pars
To achieve our second goal, we need the following lemma:
\begin{lemma}                      \label{p^t}
If $p$ is prime and $r$ is a positive integer prime to $p$, $r>1$, then
\[
p^t r\choose p^t
\]
is prime to $p$, for every integer $t\geq 0$.
\end{lemma}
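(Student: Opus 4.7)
The plan is to prove this by a direct computation of the $p$-adic valuation, using the identity
\[
\binom{p^t r}{p^t} \;=\; \prod_{i=1}^{p^t} \frac{p^t(r-1)+i}{i}\,,
\]
which follows from writing out $\binom{p^t r}{p^t} = (p^t r)!/((p^t)!\,(p^t(r-1))!)$ and pairing off the factors. The goal is then to show that for each $i$ in the range $1 \leq i \leq p^t$, the numerator $p^t(r-1)+i$ and the denominator $i$ have the same $p$-adic valuation, so that $v_p$ of the whole product is zero.

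For $1 \leq i < p^t$, I would use the straightforward observation that $v_p(i) \leq t-1$, while $v_p(p^t(r-1)) \geq t$; then the ultrametric inequality for the $p$-adic valuation gives $v_p(p^t(r-1)+i) = v_p(i)$. For the remaining factor $i = p^t$, note that $p^t(r-1)+p^t = p^t r$, so $v_p(p^t(r-1)+p^t) = t + v_p(r) = t = v_p(p^t)$, where we used $\gcd(r,p)=1$. Hence every factor in the product contributes valuation $0$, and therefore $v_p\bigl(\binom{p^t r}{p^t}\bigr) = 0$, which is precisely the claim.

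I do not expect a serious obstacle here: this is essentially a bookkeeping exercise in the $p$-adic valuation, and the hypothesis $\gcd(r,p)=1$ is used exactly once, at the critical endpoint $i = p^t$, to ensure that the extra factor of $r$ contributes no $p$. The hypothesis $r > 1$ is used only to guarantee that $p^t(r-1) \neq 0$, so the identity above makes sense; note, however, that the conclusion would remain trivially true for $r=1$ (as $\binom{p^t}{p^t}=1$), so this hypothesis is cosmetic. An alternative, perhaps slicker, route would be to invoke Lucas' theorem: the base-$p$ digits of $p^t$ are a single $1$ at position $t$, while the base-$p$ digits of $p^t r$ at position $t$ coincide with the lowest digit $r_0$ of $r$, which is nonzero since $p \nmid r$; Lucas then gives $\binom{p^t r}{p^t} \equiv r_0 \not\equiv 0 \pmod p$. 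Either approach is elementary, so the main choice is stylistic.
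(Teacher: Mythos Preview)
Your proof is correct and is essentially the same as the paper's: both write $\binom{p^t r}{p^t}$ as a ratio of $p^t$ consecutive integers over $p^t!$, pair each numerator term with the denominator term differing from it by the multiple $p^t(r-1)$ of $p^t$, and use the ultrametric inequality to conclude the $p$-adic valuations match (with the coprimality of $r$ handling the one term divisible by $p^t$). The only differences are cosmetic---your indexing runs $i=1,\ldots,p^t$ where the paper uses $m=p^t-i$---and your side remark about Lucas' theorem, which the paper does not mention.
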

\begin{proof}
Consider
\[
{p^t r\choose p^t} = \frac{p^t r (p^t r -1)\cdot\ldots\cdot(p^t r - p^t
+ 1)}{p^t(p^t-1)\cdot\ldots\cdot 1}\;.
\]
In the numerator of this fraction, the first factor $p^t r$ is divisible
by precisely $p^t$, while the remaining factors $p^t r - m$, $1\leq
m\leq p^t-1$, are not divisible by $p^t$. Hence, for every such factor
occurring in the numerator, the corresponding factor $p^t -m=p^t r
-m-p^t(r-1)$ which occurs in the denominator will be divisible by $p$ to
precisely the same power. This gives the desired result.
\end{proof}

Now we are able to prove:

\begin{proposition}                                  \label{L7}
If $i=p^t$, $j=p^t r \leq n$ with $r>1$, $(r,p)=1$,
and if $\beta_i\not=\infty$, then for $c\nearrow x$,
\[
\beta_i + i \cdot v(x-c) < \beta_j + j \cdot v(x-c)\;.
\]
Consequently, ${\bf h}_K(x:f)$ is a power of $p$ (including the case
of ${\bf h}_K(x:f)=1=p^0$).
\end{proposition}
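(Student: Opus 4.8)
The plan is to apply Lemma~\ref{C4+} not to $f$ itself but to its $p^t$-th formal derivative $f_{p^t}$ (write $i=p^t$ from now on), feeding in the combinatorial fact that $\binom{j}{i}=\binom{p^t r}{p^t}$ is prime to $p$ by Lemma~\ref{p^t}, hence has value $0$ in $(K,v)$. The point is that by the Taylor expansion (\ref{Taylorexpi}) of $f_i$, the coefficient of $(X-c)^{j-i}$ in $f_i$ is exactly $\binom{j}{i}f_j(c)$, so the corresponding term of $f_i(x)-f_i(c)$ has value $v\binom{j}{i}+\beta_j+(j-i)v(x-c)=\beta_j+(j-i)v(x-c)$ for $c\nearrow x$.

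To prove the displayed inequality I would first dispose of the trivial case $\beta_j=\infty$ (then $\beta_j+jv(x-c)=\infty$), so assume $\beta_j\neq\infty$, i.e.\ $f_j(c)\neq 0$ for $c\nearrow x$. Since $i=p^t\geq 1$ we have $\deg f_i\leq n-i<{\bf d}$, so $\appr(x,K)$ fixes the value of $f_i$; and since $\binom{j}{i}f_j(c)\neq 0$ for $c\nearrow x$, the coefficient of $(X-c)^{j-i}$ in $f_i$ is nonzero, whence $f_i$ is nonconstant and $1\leq j-i\leq\deg f_i$. Thus Lemma~\ref{C4+} applies to $f_i$ and gives, for $c\nearrow x$: first, $v(f_i(x)-f_i(c))>vf_i(c)=\beta_i$ because the value of $f_i$ is fixed; and second, by (\ref{xf-}) and (\ref{bhmin+}), that $v(f_i(x)-f_i(c))$ equals the minimum over $s$ of the values of the Taylor terms $(f_i)_s(c)(x-c)^s$, so in particular $v(f_i(x)-f_i(c))\leq\beta_j+(j-i)v(x-c)$, the value of the $s=j-i$ term. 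Combining these two facts and adding $iv(x-c)$ yields $\beta_i+iv(x-c)<\beta_j+jv(x-c)$ for $c\nearrow x$.

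For the final clause, set ${\bf h}={\bf h}_K(x:f)$; since $\infty$ can never be the value minimized in (\ref{xf-}), $\beta_{\bf h}\neq\infty$, i.e.\ $f_{\bf h}\not\equiv 0$. Suppose, for contradiction, that ${\bf h}$ is not a power of $p$, and write ${\bf h}=p^t r$ with $r>1$, $(r,p)=1$. Reading off the coefficient of $(X-c)^{{\bf h}-p^t}$ in (\ref{Taylorexpi}) with $i=p^t$ gives $(f_{p^t})_{{\bf h}-p^t}=\binom{{\bf h}}{p^t}f_{\bf h}$, and $\binom{{\bf h}}{p^t}=\binom{p^t r}{p^t}$ is prime to $p$ by Lemma~\ref{p^t}, hence nonzero in $K$; since $f_{\bf h}\not\equiv 0$ this forces $f_{p^t}\not\equiv 0$, i.e.\ $\beta_{p^t}\neq\infty$. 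Now the first part, applied with $i=p^t$ and $j={\bf h}$, gives $\beta_{p^t}+p^t v(x-c)<\beta_{\bf h}+{\bf h}\,v(x-c)$ for $c\nearrow x$, whereas (\ref{xf-}) with the index $p^t\neq{\bf h}$ gives the opposite strict inequality for $c\nearrow x$; since both hold on a common nonempty ${\bf A}_\alpha$, this is a contradiction. Hence ${\bf h}$ is a power of $p$ (including $p^0=1$).

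The genuinely new ingredient, and the only step that needs care, is the non-vanishing $\beta_{p^t}\neq\infty$ in the last paragraph — which is precisely the second use of Lemma~\ref{p^t}; everything else is bookkeeping with the Taylor expansions (\ref{Taylorexp})--(\ref{Taylorexpi}) and with the $\nearrow$-quantifier.
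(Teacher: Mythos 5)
Your proof is correct and follows essentially the same route as the paper's: both rest on the Taylor expansion (\ref{Taylorexpi}) of $f_i$ at $c$, the observation that $v(f_i(x)-f_i(c))$ strictly exceeds the fixed value $\beta_i$ while being bounded above by the value of the $(X-c)^{j-i}$ term, and Lemma~\ref{p^t} to kill $v\binom{j}{i}$; you merely invoke Lemma~\ref{C4+} applied to $f_i$ where the paper re-applies Lemma~\ref{OST} directly. One point in your favor: you explicitly verify $\beta_{p^t}\neq\infty$ (via $(f_{p^t})_{\bh - p^t} = \binom{\bh}{p^t} f_{\bh}$ and a second use of Lemma~\ref{p^t}) before applying the first part to deduce the final claim, a step the paper leaves tacit.
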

\begin{proof}
We consider the Taylor expansion (\ref{Taylorexpi}) for $f_i(x)$:
\[
\begin{array}{l}
\lefteqn{f_i(x) - f_i(c) =} \\
(i+1)f_{i+1}(c)(x-c) + \ldots
+ {j\choose i} f_j(c)(x-c)^{j-i}+ \ldots
+ {n\choose i} f_n(c)(x-c)^{n-i}\;.
\end{array}
\]
For $c\nearrow x$, the values $vf_{i+1}(c)\, ,\ldots,\,
vf_n(c)$ will be equal to $\beta_{i+1},\ldots,\beta_n$ as defined in
(\ref{AF}). We apply Lemma~\ref{OST} with $m=n-i$, $t_k= k$ for
$1\leq k\leq m$, and
\[
\alpha_1 = v(i+1)+\beta_{i+1}\,,\ldots,\>\alpha_{j-i} =
v{j\choose i}+\beta_{j}\,,\ldots,\>
\alpha_m = v{n\choose i}+\beta_{n}\;.
\]
We find that among the terms on the right hand side of
the Taylor expansion,
there will be precisely one which has least value for $
c\nearrow x$. The value of this term must then equal the value of the
left hand side of the Taylor expansion,
which yields that the latter increases
for $c\nearrow x$. But both values $vf_i(x)$ and $vf_i(c)$
are fixed for $c\nearrow x$. Hence, $v(f_i(x)-f_i(c))>
vf_i(x)=vf_i(c)=\beta_i$ for $c\nearrow x$. It follows that
in particular, the term
\[
{j\choose i} f_j(c)(x-c)^{j-i}
\]
on the right hand side of the Taylor expansion
will also have value $>\beta_i$
for $c\nearrow x$. But $v{j\choose i}=0$: if $p>0$, this is shown in
Lemma~\ref{p^t}, and if $p=1$, then $\chara Kv=0$ which means that
$\chara K=0$ and $v$ is trivial on the subfield $\Q$ of $K$. Therefore,
\[
\beta_i < \beta_j + (j-i)\cdot v(x-c)
\]
for $c\nearrow x$. This yields our assertion.
\end{proof}

%

\pars
The following lemma will give more detailed information on the
computation of ${\bf h}_K(x:f)$.
\begin{lemma}
Assume that $v(x-c)\geq 0$ for $c\nearrow x$. If $i$ is an integer such
that $\beta_i$ is minimal among all $\beta_j$, $j>0$, then ${\bf
h}_K(x:f) \leq i$.
\end{lemma}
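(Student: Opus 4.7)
The plan is to obtain a direct contradiction by combining the defining inequality of ${\bf h}={\bf h}_K(x:f)$ from Lemma~\ref{C4+} with the sign condition $v(x-c)\geq 0$ and the minimality hypothesis on $\beta_i$.

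Concretely, I would argue by contradiction: suppose ${\bf h}>i$. Lemma~\ref{C4+} gives, for every $j\ne{\bf h}$ with $1\le j\le\deg f$, the inequality
\[
\beta_{\bf h}+{\bf h}\cdot v(x-c)\;<\;\beta_j+j\cdot v(x-c)\quad\mbox{ for }c\nearrow x\,.
\]
Applying this with $j=i$ (which is admissible since $i\ne{\bf h}$ and $1\le i\le\deg f$) and rearranging yields
\[
({\bf h}-i)\cdot v(x-c)\;<\;\beta_i-\beta_{\bf h}\quad\mbox{ for }c\nearrow x\,.
\]
Now I would invoke the hypothesis $v(x-c)\geq 0$ for $c\nearrow x$: combined with ${\bf h}-i>0$, the left-hand side is $\geq 0$, so $\beta_{\bf h}<\beta_i$. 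But ${\bf h}\geq 1$, and by assumption $\beta_i$ is minimal among all $\beta_j$ with $j>0$; in particular $\beta_i\le \beta_{\bf h}$. This is the contradiction.

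I do not expect any real obstacle: the argument is a two-line application of Lemma~\ref{C4+} plus the sign condition, and the only thing one has to keep track of is that the minimum $\beta_i$ is finite (which is automatic, since the existence of ${\bf h}$ with $\beta_{\bf h}<\infty$ is guaranteed by Lemma~\ref{C4+} as $f$ is nonconstant, so the minimum of the $\beta_j$ for $j>0$ is a real element of $vK$, not $\infty$).
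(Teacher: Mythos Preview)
Your proof is correct and follows essentially the same approach as the paper: both combine the defining inequality of ${\bf h}$ from Lemma~\ref{C4+} (applied with $j=i$) with the minimality of $\beta_i$ and the sign condition $v(x-c)\geq 0$. The paper phrases it as a direct argument, writing $0\leq \beta_{\bf h}-\beta_i\leq (i-{\bf h})\cdot v(x-c)$ and concluding $i-{\bf h}\geq 0$, whereas you set it up by contradiction; the content is identical.
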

\begin{proof}
By assumption, we have that $\beta_j -\beta_i \geq 0$ for all $j>0$.
Further,
\[
\beta_{\bf h} + {\bf h}\cdot v(x-c) < \beta_j + j\cdot v(x-c)
\]
for $j>0$, $j\not= {\bf h}$, and $c\nearrow x$. Thus,
\[
0\leq \beta_{\bf h} - \beta_i \leq (i-{\bf h})\cdot v(x-c)
\]
for $c\nearrow x$, which in view of $v(x-c)\geq 0$ for $c\nearrow x$
yields that $i-{\bf h}\geq 0$, which is the assertion.
\end{proof}

\begin{lemma}                               \label{,h<}
Assume that $p\geq 2$, and write $f(X) = c_n X^n+\ldots+
c_0\,$. Suppose that there exists $i>0$ such that $vc_{i} < vc_k$ for
all $k>0$, $j\not= i$, and write $i = p^t r$ with $r$ prime to $p$. Then
$vf_{\bf h}(c)\geq vc_i$ holds for every $c$ with $vc=0$. And if $vx=0$,
then
\[
{\bf h}_K(x:f) \leq p^t\;.
\]
\end{lemma}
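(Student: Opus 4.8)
The plan is to reduce the statement to an analysis of the $i$-th formal derivative $f_i$ at a generic centre $c$ with $vc=0$, exactly as in the proof of Proposition~\ref{L7}. First I would set $i = p^t r$ with $(r,p)=1$ and observe that $f_i(X) = \sum_{j\ge i}\binom{j}{i}c_j X^{j-i}$, so $f_i(0) = c_i$ (if $i$ is among the exponents actually appearing) and more generally $f_i(c) = \sum_{j\ge i}\binom{j}{i}c_j c^{j-i}$. Since $vc = 0$, every term $\binom{j}{i}c_j c^{j-i}$ has value $\ge vc_j \ge vc_i$, with the term $j=i$ having value exactly $vc_i$ (here one uses that $\binom{i}{i}=1$, so that coefficient is a unit); by the ultrametric inequality $vf_i(c) = vc_i$ for $c\nearrow x$, hence $\beta_i = vc_i$. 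The hypothesis $vc_i < vc_k$ for all $k>0$, $k\ne i$, together with the same estimate applied to other derivatives, shows that $\beta_i$ is \emph{strictly} minimal among all $\beta_j$, $j>0$: indeed $\beta_j = vf_j(c) \ge vc_j > vc_i = \beta_i$ for $j\ne i$ (for $j=i$ we already have equality). In particular $\beta_{\bf h}\ge \beta_i = vc_i$, which gives the first assertion $vf_{\bf h}(c)\ge vc_i$ for $vc=0$.

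For the bound on ${\bf h}_K(x:f)$ when $vx=0$, I would feed $\alpha_j = \beta_j$, $t_j = j$, and $\Upsilon = \supp{\bf A}$ into the ordering statement of Lemma~\ref{C4+}: we have
\[
\beta_{\bf h} + {\bf h}\cdot v(x-c) < \beta_j + j\cdot v(x-c)
\]
for all $j>0$, $j\ne {\bf h}$, and $c\nearrow x$. Because $vx = 0$, the values $v(x-c)$ satisfy $v(x-c)\ge 0$ for $c\nearrow x$ (taking $c$ in a ball ${\bf A}_\alpha$ with $\alpha\ge 0$; this is automatic since $0$ lies in the support once $vx=0$, as in the proof of Lemma~\ref{imme}b). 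Applying the displayed inequality with $j=i$ gives
\[
0 \;\le\; \beta_{\bf h} - \beta_i \;<\; (i - {\bf h})\cdot v(x-c)
\]
for $c\nearrow x$ (the left inequality by strict minimality of $\beta_i$, so in fact $\beta_{\bf h}-\beta_i\ge 0$; if ${\bf h}=i$ there is nothing to prove). Since $v(x-c)\ge 0$, this forces $i - {\bf h} > 0$ when ${\bf h}\ne i$, i.e. ${\bf h}_K(x:f) \le i$ in all cases. This is the content of the previous (unnumbered) lemma in the excerpt; I would simply invoke it with this $i$.

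It remains to sharpen ${\bf h}\le i = p^t r$ to ${\bf h}\le p^t$. Here I would invoke Proposition~\ref{L7}: since $\beta_i\ne\infty$ (as $vc_i<\infty$), that proposition with $i' = p^t$, $j' = p^t r = i$ (note $r>1$ by hypothesis and $(r,p)=1$) gives
\[
\beta_{p^t} + p^t\cdot v(x-c) \;<\; \beta_i + i\cdot v(x-c)
\qquad\mbox{for } c\nearrow x.
\]
Thus $i$ itself can never be the relative approximation degree — the index $p^t$ always beats it — so the integer ${\bf h}$ realizing the strict minimum of $\beta_j + j\cdot v(x-c)$ satisfies ${\bf h}\ne i$, and combined with ${\bf h}\le i$ from the previous paragraph we may repeat the estimate: applying the ordering inequality of Lemma~\ref{C4+} with $j = p^t$ gives $0\le \beta_{\bf h}-\beta_{p^t}\le (p^t-{\bf h})v(x-c)$ for $c\nearrow x$ — here I need $\beta_{p^t}\le\beta_{\bf h}$, which is where a little care is required. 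The cleanest route, and the one I would take, is: by Proposition~\ref{L7} the term with index $p^t$ strictly dominates the term with index $i=p^tr$ for $c\nearrow x$; so in the minimum defining ${\bf h}$ we may discard $i$, and then the argument of the previous lemma, applied with $p^t$ in place of $i$ (legitimate since $\beta_{p^t}$ need not be minimal, but $\beta_{p^t}+p^t v(x-c) < \beta_i + i v(x-c)$ shows ${\bf h}$'s defining value is $\le \beta_{p^t}+p^t v(x-c)$), yields $\beta_{\bf h}+{\bf h}v(x-c)\le \beta_{p^t}+p^t v(x-c)$, hence $({\bf h}-p^t)v(x-c)\le \beta_{p^t}-\beta_{\bf h}\le 0$ provided $\beta_{p^t}\le\beta_{\bf h}$; since $v(x-c)\ge 0$ and can be taken arbitrarily large (the support has no maximum), this forces ${\bf h}\le p^t$.

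\textbf{Main obstacle.} The delicate point is the last step: ${\bf h}\le i$ does not by itself give ${\bf h}\le p^t$, and one must genuinely use Proposition~\ref{L7} to knock out the index $i$ and then re-run the comparison against $p^t$. The subtlety is ensuring $\beta_{p^t}\le \beta_{\bf h}$ (or otherwise arguing directly from the inequality $\beta_{\bf h}+{\bf h}v(x-c) \le \beta_{p^t}+p^t v(x-c)$ valid for all $c\nearrow x$, letting $v(x-c)\to$ the cofinal values of the support, which is divisible-free but unbounded). Everything else — identifying $\beta_i = vc_i$, strict minimality, the $vf_{\bf h}(c)\ge vc_i$ claim — is a routine ultrametric computation with the Taylor/derivative formulas already set up in the paper.
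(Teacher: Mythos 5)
Your computation of $\beta_i = vc_i$ (for $i = p^t r$) is correct, but the claim that $\beta_i$ is \emph{strictly} minimal among all $\beta_j$, $j>0$, is false, and the estimate used to justify it, $\beta_j = vf_j(c) \ge vc_j$, does not hold. From the Taylor expansion $f_j(c) = \sum_{k\ge j}\binom{k}{j}c_kc^{k-j}$ with $vc=0$, one only gets $vf_j(c) \ge \min_{k\ge j} vc_k = vc_i$ (when $j\le i$), not $\ge vc_j$ (which can even be $\infty$ while $f_j(c)\ne 0$). In fact the key point of the lemma is precisely that equality is attained at $j = p^t$: by Lemma~\ref{p^t}, $v\binom{p^tr}{p^t}=0$, so the term $\binom{p^tr}{p^t}c_i c^{i-p^t}$ in $f_{p^t}(c)$ has value exactly $vc_i$, while all other terms have strictly larger value, giving $\beta_{p^t}=vc_i=\beta_i$. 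So $\beta_i$ is only \emph{non-strictly} minimal, with $\beta_{p^t}$ tied with it.

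This matters because your route to ${\bf h}\le p^t$ depends on exactly the fact you leave open. You correctly identify that your argument needs $\beta_{p^t}\le\beta_{\bf h}$ (and, to even invoke Proposition~\ref{L7}, that $\beta_{p^t}\ne\infty$), but you never establish either. The detour through ${\bf h}\le i$ and then ``knocking out $i$'' with Proposition~\ref{L7} is unnecessary once $\beta_{p^t}$ is computed: the paper simply shows $\beta_{p^t}=vc_i$ and $\beta_j\ge vc_i$ for all $j\ge 1$, so $\beta_{p^t}$ is minimal, and the preceding (unnumbered) lemma applied with $p^t$ in place of $i$ gives ${\bf h}_K(x:f)\le p^t$ immediately. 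The fix to your proposal is to replace the strict-minimality claim with the explicit computation $\beta_{p^t}=vc_i$ (which is where Lemma~\ref{p^t} is actually used in this proof, not only via Proposition~\ref{L7}); once that is in hand, the rest of your machinery is superfluous and the conclusion follows directly from the previous lemma.
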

\begin{proof}
For $vc=0$ and $j\geq 1$, by the definition (\ref{iderivative}) of the
$j$-th formal derivative,
\[
vf_j(c)\>=\> v \sum_{k=j}^n {k\choose j} c_k c^{k-j}\>\geq\>
\min_{j\leq k\leq n} v {k\choose j} c_k c^{k-j} \>\geq\> vc_i\;.
\]
By Lemma~\ref{p^t}, the binomial coefficient $p^t r\choose p^t$ is not
divisible by $p$. This shows that $v{p^t r\choose p^t} = 0$ and thus,
\[
vf_{p^t}(c) = vc_i\;.
\]
Now assume in addition that $vx=0$. Then $vc=0$ for $c\nearrow x$.
This yields that
\[
\beta_{p^t} = vc_i\leq \beta_j
\]
for all $j>0$. The foregoing lemma now gives our assertion.
\end{proof}

\begin{corollary}                \label{h(<}
Assume that $vx=0$, and take an integer $e\geq 1$. Suppose
that all nonzero coefficients $c_i$ of $f$, $i>0$, have different values
and that for all $i$ with $p^e | i$, the coefficient $c_i$ is equal to
zero. Then ${\bf h}_K(x:f)<p^e$.
\end{corollary}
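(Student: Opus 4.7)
The plan is to apply Lemma~\ref{,h<} directly. The setup there provides exactly the conclusion ${\bf h}_K(x:f)\leq p^t$ whenever a unique coefficient $c_i$ (with $i>0$) has strictly smallest value and $i=p^t r$ with $(r,p)=1$. So the task reduces to identifying this $i$ and verifying that its $p$-part $p^t$ is strictly less than $p^e$.

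First, since by the setup (\ref{AF}) $f$ is nonconstant, there is at least one $i>0$ with $c_i\neq 0$. Because the values of the nonzero coefficients $c_i$ (for $i>0$) are pairwise distinct, I can pick the unique $i_0>0$ minimizing $vc_{i_0}$ among the nonzero ones. For any other $k>0$ with $k\neq i_0$, either $c_k=0$, in which case $vc_k=\infty>vc_{i_0}$, or $c_k\neq 0$, in which case $vc_k\neq vc_{i_0}$ by the distinctness assumption and $vc_k\geq vc_{i_0}$ by minimality, hence $vc_k>vc_{i_0}$. This verifies the hypothesis of Lemma~\ref{,h<} for $i_0$.

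Next, write $i_0=p^t r$ with $(r,p)=1$. Since $c_{i_0}\neq 0$, the hypothesis that $c_i=0$ whenever $p^e\mid i$ forces $p^e\nmid i_0$, i.e., $t<e$, so $p^t\leq p^{e-1}<p^e$.

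Finally, invoking Lemma~\ref{,h<} (which requires $vx=0$, as assumed here) yields ${\bf h}_K(x:f)\leq p^t<p^e$, which is the desired conclusion. I do not foresee any real obstacle: the corollary is essentially a bookkeeping consequence of the preceding lemma together with the constraint that $p^e$ does not divide the index of the (unique) minimum-value nonzero coefficient.
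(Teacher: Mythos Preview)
Your proof is correct and matches the paper's intended approach: the corollary is stated without proof in the paper, indicating it is meant to follow immediately from Lemma~\ref{,h<}, and your argument spells out precisely that deduction. One small point you leave implicit is that the hypotheses force $p\geq 2$ (otherwise $p^e=1$ would divide every $i>0$, making $f$ constant), which is needed to invoke Lemma~\ref{,h<}; this is harmless but worth noting.
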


%
%
%
\section{Approximation types and distances of polynomials}\label{sectad}
Recall that throughout this section, we assume the situation of
(\ref{AF}).

\begin{lemma}                   \label{LK1a}
The following holds:
\begin{equation}                          \label{xfx}
c\in\appr(x,K)_{\gamma} \Longleftrightarrow f(c)\in
\appr(f(x),K)_{\beta_{\bf h} + {\bf h}\cdot \gamma}
\mbox{ for } c\nearrow x\;.
\end{equation}
In particular,
\begin{equation}                            \label{LK1aeq}
\dist(f(x),K)\geq \dist_K (f(x),f(K))= \beta_{\bf h}+{\bf h}\cdot
\dist(x,K)\;.
\end{equation}
\end{lemma}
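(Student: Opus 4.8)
The plan is to prove equation (\ref{xfx}) first, since equation (\ref{LK1aeq}) will follow from it essentially by translating a statement about balls into a statement about cut sets. Fix $\gamma \in vK$ and an element $c_\gamma \in \appr(x,K)_\gamma$, so $v(x-c_\gamma)\geq\gamma$; I want to analyze when $f(c)$ lies in $\appr(f(x),K)_{\beta_{\bf h}+{\bf h}\cdot\gamma}$ for $c\nearrow x$. The key input is equation (\ref{bhmin+}) of Lemma~\ref{C4+}, which tells us that $v(f(x)-f(c))=\beta_{\bf h}+{\bf h}\cdot v(x-c)$ for $c\nearrow x$. So the containment $f(c)\in\appr(f(x),K)_{\beta_{\bf h}+{\bf h}\cdot\gamma}$ says precisely $\beta_{\bf h}+{\bf h}\cdot v(x-c)\geq\beta_{\bf h}+{\bf h}\cdot\gamma$, i.e.\ $v(x-c)\geq\gamma$ (using that ${\bf h}\geq 1$ and that we are in an ordered abelian group, so multiplication by the positive integer ${\bf h}$ is order-preserving and injective on values). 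That is exactly the condition $c\in\appr(x,K)_\gamma$. One has to be a little careful that all of this holds simultaneously ``for $c\nearrow x$'': I would pick $\alpha$ in $\supp{\bf A}$ large enough that both (\ref{bhmin+}) holds for all $c\in{\bf A}_\alpha$ and $\alpha\geq\gamma$ if $\gamma\in\supp{\bf A}$, and note that for $c\in{\bf A}_\alpha$ with $\alpha\geq\gamma$ we automatically have $v(x-c)\geq\gamma$; conversely if $\gamma\notin\supp{\bf A}$ then $\appr(x,K)_\gamma=\emptyset$ and, since ${\bf A}$ is immediate so $\supp{\bf A}=v(x-K)$ by Lemma~\ref{imme}c), there is no $c$ with $v(x-c)\geq\gamma$ at all, making both sides of (\ref{xfx}) vacuously equivalent.

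Next I would deduce (\ref{LK1aeq}). The middle term $\dist_K(f(x),f(K))$ is, by definition, the cut in $\widetilde{vK}$ whose lower cut set is the smallest initial segment containing $\{v(f(x)-f(c))\mid c\in K\}\cap vK$. By (\ref{bhmin+}), for $c\nearrow x$ these values are exactly $\beta_{\bf h}+{\bf h}\cdot v(x-c)$, and as $c$ ranges over all of $K$ the set $\{v(x-c)\}=v(x-K)=\supp{\bf A}$, which generates (the lower cut set of) $\dist(x,K)$. Applying the order isomorphism $\gamma\mapsto\beta_{\bf h}+{\bf h}\gamma$ of $\widetilde{vK}$ (well-defined by divisibility, as noted in the discussion of $n\cdot\dist(x,K)$ in Section~\ref{sectatdef}), the smallest initial segment containing $\{\beta_{\bf h}+{\bf h}\cdot v(x-c)\mid c\in K\}$ is the image of the lower cut set of $\dist(x,K)$, which by definition is $\beta_{\bf h}+{\bf h}\cdot\dist(x,K)$. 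This gives the claimed equality. The first inequality $\dist(f(x),K)\geq\dist_K(f(x),f(K))$ is immediate: $f(K)\subseteq K$, so the cut set defining $\dist_K(f(x),f(K))$ is contained in the one defining $\dist(f(x),K)=\dist_K(f(x),K)$. This matches the general inequality (\ref{distieq}) with $g=f$, but here we get equality in the second place precisely because $\deg f\leq{\bf d}$ lets us invoke Lemma~\ref{C4+}.

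I do not expect a serious obstacle here; the content is essentially bookkeeping with cuts in the divisible hull. The one place to be attentive is the handling of values in $\supp{\bf A}$ versus values not in $\supp{\bf A}$ (the vacuous cases), and the care needed to phrase everything uniformly ``for $c\nearrow x$'' rather than for a single $c$ — but Lemma~\ref{C4+} is stated in exactly that form, so (\ref{bhmin+}) plugs in directly. The only mild subtlety is that multiplication by ${\bf h}$ need not be surjective on $vK$ itself, which is exactly why the distance is taken in $\widetilde{vK}$; working there, $\gamma\mapsto\beta_{\bf h}+{\bf h}\gamma$ is a genuine order isomorphism and the translation of lower cut sets is clean.
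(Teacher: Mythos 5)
Your proof of (\ref{xfx}) is fine, and so is the derivation of the first inequality in (\ref{LK1aeq}). But the argument for the second equality has a real gap: you have computed the cut generated by the set $\{\beta_{\bf h}+{\bf h}\cdot v(x-c)\mid c\in K\}$, which by the order isomorphism does equal $\beta_{\bf h}+{\bf h}\cdot\dist(x,K)$. However, the lower cut set of $\dist_K(f(x),f(K))$ is generated by $\{v(f(x)-f(c))\mid c\in K\}$, and the identity $v(f(x)-f(c))=\beta_{\bf h}+{\bf h}\cdot v(x-c)$ from (\ref{bhmin+}) is only established for $c\nearrow x$, not for all $c\in K$. You silently identified the two generating sets. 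The subset of values with $c\nearrow x$ gives you the direction $\dist_K(f(x),f(K))\geq\beta_{\bf h}+{\bf h}\cdot\dist(x,K)$ at no cost, but the converse inequality is exactly the nontrivial content: a priori there could be a ``remote'' element $c_0\in K$, not appearing in the range $c\nearrow x$, for which $f(c_0)$ happens to be a much better approximation to $f(x)$ than any $f(c)$ with $c\nearrow x$ — for instance if $c_0$ happened to approximate some other preimage of $f(x)$ under $f$.

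The paper closes this gap by a separate contradiction argument that you did not supply. Assume, for contradiction, that some $c_0\in K$ satisfies $v(f(x)-f(c_0))>\beta_{\bf h}+{\bf h}\cdot\dist(x,K)$. Then by the ultrametric triangle law, $v(f(c_0)-f(c))=v(f(x)-f(c))=\beta_{\bf h}+{\bf h}\cdot v(x-c)$ for $c\nearrow x$. Writing $f(c_0)-f(c)$ via the Taylor expansion at $c_0$ and using that the immediate approximation type ${\bf A}$ fixes the value of the linear polynomial $X-c_0$, one extracts a polynomial of degree $n-1=\deg f-1<{\bf d}$ whose value is not fixed by ${\bf A}$, contradicting that ${\bf d}$ is the degree of ${\bf A}$. (The case $\dist(x,K)=\infty$ is handled separately, where the claimed equality already follows from (\ref{distieq}).) Without an argument of this type — or some other way of controlling $v(f(x)-f(c_0))$ for arbitrary $c_0\in K$ — your proof does not establish the equality $\dist_K(f(x),f(K))=\beta_{\bf h}+{\bf h}\cdot\dist(x,K)$.
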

\begin{proof}
Equation (\ref{bhmin+}) of Lemma~\ref{C4+} yields (\ref{xfx}),
%
%
while the inequality $\dist(f(x),K)\geq \dist_K(f(x),f(K))$ was already
stated in (\ref{distieq}). It remains to prove that
\[
\dist_K (f(x),f(K)) = \beta_{\bf h}+ {\bf h}\cdot\dist(x,K)\;.
\]
If $\dist(x,K)=\infty$, this equality follows immediately from
(\ref{distieq}). So let us assume from now on that $\dist(x,K)<\infty$.
In order to deduce a contradiction, assume that there exists an element
$c_0\in K$ such that
\[
v(f(x)-f(c_0))> \beta_{\bf h}+{\bf h}\cdot\dist(x,K)\;,
\]
or equivalently,
\[
v(f(x)-f(c_0)) > v(f(x)-f(c))
\]
for $c\nearrow x$. Hence
\begin{eqnarray*}
v(f(c_0)-f(c)) &=& \min\{v(f(x)-f(c)),v(f(x)-f(c_0))\}\\
&=& v(f(x)-f(c)) \>=\> \beta_{\bf h} + {\bf h}\cdot v(x-c)
\end{eqnarray*}
for $c\nearrow x$.
Replacing $x$ by $c_0$ in the Taylor expansion (\ref{Tv}), we find
\begin{eqnarray*}
v(f_1(c_0)\cdot (c-c_0)+ \ldots + f_n(c_0)\cdot (c-c_0)^n)
&=& v(f(c_0)-f(c))\\
&=& \beta_{\bf h} + {\bf h}\cdot v(x-c)
\end{eqnarray*}
for $c\nearrow x$. As noted already at the beginning of
Section~\ref{sectiat}, an immediate approximation type fixes the value
of every linear polynomial. Hence, $v(c-c_0)$ will be fixed for
$c\nearrow x$. On the other hand, the value $\beta_{\bf h} + {\bf h}
\cdot v(x-c)$ is not fixed for $c\nearrow x$, so we conclude that
the value
\[
v(\,f_1(c_0) + f_2(c_0)\cdot (c-c_0)+ \ldots +f_n(c_0)\cdot
(c-c_0)^{n-1}\,)
\]
is not fixed for $c\nearrow x$. This proves the existence of a
polynomial of degree $n-1$ whose value is not fixed by $\appr(x,K)$. But
$n-1=\deg f -1<{\bf d}$, a contradiction. This proves the desired
equality.
\end{proof}

\begin{lemma}                         \label{LK1b}
Assume that $\deg f< \mbox{\bf d}$. Then $\appr(f(x),K)$ is
an immediate approximation type over $K$ with
\begin{equation}                 \label{distEq}
\dist(f(x),K) = \dist_K (f(x),f(K)) = \beta_{\bh}+{\bh}\cdot\dist(x,K),
\end{equation}
and $\appr(f(x),K)$ is determined by (\ref{xfx}).
\end{lemma}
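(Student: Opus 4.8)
The plan is to leverage Lemma~\ref{LK1a}, which already gives the equality $\dist_K(f(x),f(K)) = \beta_{\bh} + \bh\cdot\dist(x,K)$, together with the trivial inequality $\dist(f(x),K)\geq\dist_K(f(x),f(K))$ from (\ref{distieq}). So the only genuine content is the reverse inequality $\dist(f(x),K)\leq\beta_{\bh}+\bh\cdot\dist(x,K)$, i.e.\ that no element $c_0\in K$ can satisfy $v(f(x)-c_0)>\beta_{\bh}+\bh\cdot\dist(x,K)$. First I would note that if $\dist(x,K)=\infty$ there is nothing to prove (equation (\ref{distieq}) already forces equality), so assume $\dist(x,K)<\infty$. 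The key observation is that under the \emph{strict} hypothesis $\deg f<\mbox{\bf d}$, every polynomial of degree $\leq\deg f$ — in particular $f$ itself and all its formal derivatives $f_i$ — has its value fixed by $\appr(x,K)$; this is what was unavailable in Lemma~\ref{LK1a}, where only $\deg f\leq\mbox{\bf d}$ was assumed.

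The argument for the reverse inequality runs parallel to the contradiction argument already given inside the proof of Lemma~\ref{LK1a}: suppose $c_0\in K$ with $v(f(x)-c_0)>v(f(x)-f(c))$ for $c\nearrow x$. Since $\appr(x,K)$ fixes the value of $f$, we have $v(f(c)-f(c_0 \text{-part}))$-type estimates; more precisely, writing $f(x)-c_0 = (f(x)-f(c)) + (f(c)-c_0)$ and using that $v(f(x)-f(c))=\beta_{\bh}+\bh\cdot v(x-c)$ is strictly increasing for $c\nearrow x$ while (since $\appr(x,K)$ fixes the value of $f$) $vf(c)$ is eventually constant, one gets that $c_0$ behaves like a value of $f$; but then the same manipulation of the Taylor expansion of $f$ at $c_0$ used in Lemma~\ref{LK1a} produces a polynomial of degree $n-1=\deg f-1<\mbox{\bf d}$ whose value is not fixed by $\appr(x,K)$, contradicting the definition of $\mbox{\bf d}$. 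This yields $\dist(f(x),K)=\dist_K(f(x),f(K))$, hence (\ref{distEq}).

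It remains to check that $\appr(f(x),K)$ is an \emph{immediate} approximation type and that it is determined by (\ref{xfx}). For immediacy, by Lemma~\ref{imme}(a) I must show that $\{v(f(x)-c)\mid c\in K\}$ has no maximal element; but by the equality just proved, the values $v(f(x)-f(c))=\beta_{\bh}+\bh\cdot v(x-c)$ for $c\nearrow x$ are cofinal in this set (nothing in $K$ does strictly better), and these are strictly increasing because $v(x-c)$ strictly increases as $\appr(x,K)$ is immediate — so there is no maximum. For the last clause, (\ref{xfx}) says precisely $c\in\appr(x,K)_\gamma\Leftrightarrow f(c)\in\appr(f(x),K)_{\beta_{\bh}+\bh\gamma}$ for $c\nearrow x$; since $\appr(x,K)$ is determined by the balls $\appr(x,K)_\gamma$ for $\gamma$ cofinal in its support, and since $\beta_{\bh}+\bh\gamma$ runs cofinally in $\supp\appr(f(x),K)$ as $\gamma$ runs cofinally in $\supp\appr(x,K)$ (using (\ref{distEq})), the balls $\appr(f(x),K)_{\beta_{\bh}+\bh\gamma}=B_{\beta_{\bh}+\bh\gamma}(f(c_\gamma),K)$ are determined by $\appr(x,K)$, hence so is the whole approximation type $\appr(f(x),K)$. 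The main obstacle is the reverse distance inequality, and specifically making precise the step that the existence of $c_0$ beating all the $v(f(x)-f(c))$ forces a degree-$(n-1)$ polynomial with non-fixed value; but since this is essentially a transcription of the corresponding passage in Lemma~\ref{LK1a} with the strict bound $\deg f<\mbox{\bf d}$ making the contradiction land, it should go through cleanly.
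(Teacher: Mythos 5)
Your overall strategy — invoking Lemma~\ref{LK1a} for the second equality, reducing to the reverse of the first inequality, and exploiting the \emph{strict} bound $\deg f<\mbox{\bf d}$ — is the right one, and the immediacy and determination parts of your argument are correct and match the paper. However, the specific reasoning you give for the reverse distance inequality does not go through as stated. Your $c_0$ is an arbitrary element of $K$ compared directly against $f(x)$, whereas in Lemma~\ref{LK1a} the offending element is of the form $f(c_0)$ with $c_0$ in the \emph{domain}; the Taylor expansion $f(X)=\sum_i f_i(c_0)(X-c_0)^i$ and the subsequent factoring of $(c-c_0)$ that yields a degree-$(n-1)$ polynomial only make sense in that latter setting. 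You cannot transplant that manipulation, because there is no reason for your $c_0$ to be (close to) a value of $f$, and ``$c_0$ behaves like a value of $f$'' is never actually established.

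The fix is available inside what you already wrote, but you did not draw the right conclusion from it. Having observed that $v(f(c)-c_0)=v(f(x)-f(c))=\beta_{\bf h}+{\bf h}\cdot v(x-c)$ for $c\nearrow x$ is strictly increasing, you should conclude directly that $\appr(x,K)$ does not fix the value of the polynomial $f-c_0$; since $\deg(f-c_0)=\deg f<\mbox{\bf d}$, this already contradicts the definition of $\mbox{\bf d}$ — no reduction to degree $n-1$ and no Taylor expansion at $c_0$ are needed. The paper's proof makes this same use of $\deg(f-b)<\mbox{\bf d}$, but runs it forwards rather than by contradiction: for any $b\in K$, Lemma~\ref{C4+} applied to $f-b$ gives $v(f(x)-b)=v(f(c)-b)$ for $c\nearrow x$, and then the ultrametric inequality yields $v(f(x)-f(c))\geq\min\{v(f(x)-b),v(f(c)-b)\}=v(f(x)-b)$, showing that the values $v(f(x)-f(c))$ are cofinal in $\{v(f(x)-b)\mid b\in K\}$. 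Both routes rest on the same key lemma; the direct version avoids the detour through a contradiction and also feeds immediately into the immediacy argument (combining with the second equality of (\ref{distEq}) to produce a $c'$ with $v(f(x)-f(c'))>v(f(x)-b)$).
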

\begin{proof}
%
In view of (\ref{LK1aeq}), to prove the first equality in (\ref{distEq})
we have to show that for every $b\in K$ there exists an element $c\in K$
such that $v(f(x)-f(c))\geq v(f(x)-b)$. Since $\deg(f-b)= \deg f
<\mbox{\bf d}$, it follows that $\appr(x,K)$ fixes the value of $f-b$.
Applying Lemma~\ref{C4+} to $f-b$ in place of $f$, we deduce that
$v(f(x)-b) = v(f(c)-b)$ for $c\nearrow x$. Consequently, for such an
element $c\in K$ we get that
\[
v(f(x)-f(c))\geq\min\{v(f(x)-b),v(f(c)-b)\} = v(f(x)-b)\;,
\]
as desired.

By the second equality of (\ref{distEq}), which has already been proved
in Lemma~\ref{LK1a}, we know that there exists $c'\in K$ such that
$v(f(x)-f(c'))>v(f(x)-f(c))\geq v(f(x)-b)$. We have proved that for
every $b\in K$ there is $b'=f(c')\in K$ such that $v(f(x)-b')>
v(f(x)-b)$. Part a) of Lemma \ref{imme} now shows that $\appr(f(x),K)$
is immediate.

By (\ref{distEq}), the values $\beta_{\bh}+{\bh}\cdot v(x-c)$ are
cofinal in $\supp\, \appr(f(x),K)$ for $c\nearrow x$. Therefore,
$\appr(f(x),K)$ is determined by the balls $\appr(f(x),K)_{\beta_{\bh}
+{\bh}\cdot v(x-c)}$ for those $c$, which in turn are determined by
(\ref{xfx}).
\end{proof}

\begin{corollary}                               \label{+gf}
Assume that $\deg f< \mbox{\bf d}$, and let $d'\geq 1$ be a
natural number such that $d'\cdot \deg f\leq\mbox{\bf d}$. Then
\[
\deg \appr(f(x),K) \geq d'\;.
\]
In particular, if $\appr(x,K)$ is transcendental, then so is
$\appr(f(x),K)$.
\end{corollary}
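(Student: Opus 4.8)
The plan is to prove the claim $\deg\appr(f(x),K)\geq d'$ by contradiction: suppose that the degree of $\appr(f(x),K)$ is some $e<d'$. By Lemma~\ref{LK1b} (applicable since $\deg f<\mathbf{d}$) we know that $\appr(f(x),K)$ is an immediate approximation type, so it makes sense to speak of its degree and of an associated minimal polynomial $G\in K[Y]$, $\deg G=e$, whose value is not fixed by $\appr(f(x),K)$. The idea is to pull this polynomial back along $f$ to produce a polynomial in $X$ of degree $e\cdot\deg f<d'\cdot\deg f\leq\mathbf{d}$ whose value is not fixed by $\appr(x,K)$, contradicting the very definition of $\mathbf{d}=\deg\appr(x,K)$.

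Concretely, I would set $g(X):=G(f(X))\in K[X]$, so $\deg g=e\cdot\deg f<\mathbf{d}$. The point is to show that $\appr(x,K)$ does not fix the value of $g$. By Corollary~\ref{+vgc}, $\appr(f(x),K)$ fails to fix the value of $G$ precisely when $vG(f(x))>vG(f(c'))$ for $c'\nearrow f(x)$; since $\appr(f(x),K)=\appr(f(x),K)$ and, by Lemma~\ref{LK1a}, the balls of $\appr(f(x),K)$ cofinally have the form $\appr(f(x),K)_{\beta_{\mathbf h}+\mathbf{h}\cdot\gamma}$ with centres $f(c)$ for $c\nearrow x$, one can translate ``$c'\nearrow f(x)$'' into ``$c\nearrow x$'' via $c'=f(c)$. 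Thus $vG(f(x))>vG(f(c))$ for $c\nearrow x$; that is, $vg(x)>vg(c)$ for $c\nearrow x$, and again by Corollary~\ref{+vgc} this says exactly that $\appr(x,K)$ does not fix the value of $g$. Since $\deg g<\mathbf{d}$, this contradicts the definition of the degree $\mathbf{d}$ of $\appr(x,K)$. Hence no such $e<d'$ exists, i.e.\ $\deg\appr(f(x),K)\geq d'$.

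The ``in particular'' is then immediate: if $\appr(x,K)$ is transcendental, then $\mathbf{d}=\infty$, so the hypothesis $d'\cdot\deg f\leq\mathbf{d}$ holds for every $d'\geq1$, and we conclude $\deg\appr(f(x),K)\geq d'$ for all $d'$, i.e.\ $\appr(f(x),K)$ is transcendental. (Alternatively one notes directly that if $\appr(f(x),K)$ had finite degree with associated minimal polynomial $G$, the polynomial $G(f(X))$ would have finite degree and unfixed value, contradicting $\mathbf{d}=\infty$.)

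The step I expect to be the main obstacle is the bookkeeping in the substitution ``$c'\nearrow f(x)\leftrightarrow c\nearrow x$'': one must be careful that the \emph{nearrow} quantifier over $\appr(f(x),K)$ can indeed be witnessed by centres of the form $f(c)$ with $c$ ranging over a cofinal family of balls of $\appr(x,K)$, and that ``the value is not fixed'' passes correctly through the composition. This is exactly what Lemma~\ref{LK1a} (via \eqref{xfx}) and Corollary~\ref{+vgc} are designed to supply, so once those are invoked cleanly the argument is short; the temptation to be resisted is to try to compute $\mathbf{h}_K(x:g)$ or the distance of $g(x)$ explicitly, which is unnecessary for a mere degree bound.
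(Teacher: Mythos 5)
Your proof is correct and follows essentially the same route as the paper: compose a low-degree polynomial $G$ that $\appr(f(x),K)$ fails to fix with $f$, use the equality $\dist_K(f(x),f(K))=\dist(f(x),K)$ from Lemma~\ref{LK1b} (valid because $\deg f<\mathbf{d}$) to translate ``$a\nearrow f(x)$'' into ``$f(c)$ for $c\nearrow x$'', and conclude that $\appr(x,K)$ fails to fix the value of $G\circ f$ of degree $<\mathbf{d}$, a contradiction. The paper's write-up works with an arbitrary $g$ of degree $<d'$ rather than an associated minimal polynomial, and cites Lemma~\ref{C4+} directly where you cite Corollary~\ref{+vgc}, but these are cosmetic differences; note also that the cofinality of the balls centred at $f(c)$ is really supplied by the equality $\dist(f(x),K)=\dist_K(f(x),f(K))$ in Lemma~\ref{LK1b} rather than by Lemma~\ref{LK1a} alone.
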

\begin{proof}
Take a polynomial $g$ of degree smaller than $d'\leq {\bf d}$. Suppose
that\linebreak
$\appr(f(x),K)$ does not fix the value of $g$. Then by Lemma~\ref{C4+},
\[
vg(f(x))\> >\> vg(a)
\]
for $a\nearrow f(x)$. Since $\deg f <\mbox{\bf d}$, Lemma~\ref{LK1b}
shows that $\dist_K (f(x),f(K)) = \dist(f(x),K)$, so
\[
vg(f(x))\> >\> vg(f(c))
\]
for $c\nearrow x$. But then by Lemma~\ref{C4+}, $\appr(x,K)$ does
not fix the value of the polynomial $f(g(X))$. This contradicts the fact
that its degree is smaller than {\bf d}.
\end{proof}

\begin{lemma}                
Assume that $\appr(x,K)$ does not fix the value of $f$ (hence $\deg f
=\mbox{\bf d}$). Then
\[
vf(x) > \beta_{\bf h} + {\bf h}\cdot v(x-c) \quad\mbox{for $c\nearrow x$.}
\]
\end{lemma}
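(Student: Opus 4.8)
The plan is to read this off Lemma~\ref{C4+} with essentially no extra work. First I would record why the parenthetical equality $\deg f=\mbox{\bf d}$ holds: by the standing assumptions (\ref{AF}) we have $\deg f=n\leq\mbox{\bf d}$, and an approximation type of degree $\mbox{\bf d}$ fixes the value of every polynomial of degree $<\mbox{\bf d}$; hence the hypothesis that $\appr(x,K)$ does not fix the value of $f$ forces $\deg f=\mbox{\bf d}$. This observation is not strictly needed for the displayed inequality, but it justifies the phrasing of the statement.

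Next I would simply quote the final conclusion of Lemma~\ref{C4+} in the case where ${\bf A}=\appr(x,K)$ does not fix the value of $f$, namely
\[
vf(x)>vf(c)=\beta_{\bf h}+{\bf h}\cdot v(x-c)\;\;\;\mbox{ for }c\nearrow x\;,
\]
and then drop the middle term of this chain of (in)equalities to obtain $vf(x)>\beta_{\bf h}+{\bf h}\cdot v(x-c)$ for $c\nearrow x$, which is exactly the assertion. If one prefers to treat $vf(x)>vf(c)$ and $vf(c)=\beta_{\bf h}+{\bf h}\cdot v(x-c)$ as two separate facts, each holding ``for $c\nearrow x$'', one recombines them by the elementary observation recorded right after the introduction of the ``$c\nearrow{\bf A}$'' notation, that a conjunction of two assertions valid ``for $c\nearrow x$'' is again valid ``for $c\nearrow x$''.

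I do not foresee any genuine obstacle: the lemma is really a convenience repackaging of part of Lemma~\ref{C4+} for later citation. The only point one might briefly verify is that $\beta_{\bf h}\neq\infty$, so that $\beta_{\bf h}+{\bf h}\cdot v(x-c)$ genuinely lies in the value group; but this is automatic, since $f_n=c_n\neq0$ gives $\beta_n\neq\infty$, and the defining property (\ref{xf-}) of ${\bf h}$ then forces $\beta_{\bf h}+{\bf h}\cdot v(x-c)$ to be finite for $c\nearrow x$.
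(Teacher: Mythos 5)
Your argument is correct: the assertion is an immediate consequence of the last display in Lemma~\ref{C4+}, which (in the case where ${\bf A}$ does not fix the value of $f$) already asserts the chain $vf(x)>vf(c)=\beta_{\bf h}+{\bf h}\cdot v(x-c)$ for $c\nearrow x$, and dropping the middle term gives exactly the statement. Your auxiliary remarks are also sound: the hypothesis forces $\deg f={\bf d}$ because an approximation type of degree ${\bf d}$ fixes the value of every polynomial of degree $<{\bf d}$, and $\beta_{\bf h}+{\bf h}\cdot v(x-c)$ is finite because $\beta_n=vc_n<\infty$ for the (nonconstant) leading coefficient together with~(\ref{xf-}).

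The paper, however, does not simply cite Lemma~\ref{C4+} here; it gives a short self-contained argument. It rewrites~(\ref{Tv}) as $-f(c)=f_1(c)(x-c)+\cdots+f_n(c)(x-c)^n-f(x)$ and argues by contraposition: if one had $vf(x)<\beta_{\bf h}+{\bf h}\cdot v(x-c)$ for $c\nearrow x$, then $-f(x)$ would be the unique term of smallest value on the right-hand side, so $vf(c)=vf(x)$ would be fixed for $c\nearrow x$, contradicting the hypothesis; this yields $vf(x)\geq\beta_{\bf h}+{\bf h}\cdot v(x-c)$, and strictness follows since $v(x-K)$ has no maximal element. So your route is shorter and treats the lemma as a convenience restatement of something already available, while the paper's route is self-contained and makes explicit why the inequality cannot fail, essentially re-deriving the relevant case of the ``consequently'' clause of Lemma~\ref{C4+} rather than invoking it. Both are valid; your observation amounts to noting that, given Lemma~\ref{C4+} as stated, this lemma is logically redundant.
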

\begin{proof}
We rewrite (\ref{Tv}) as follows:
\[
-f(c) = f_1(c)\cdot (x-c)+\ldots+ f_n(c)\cdot (x-c)^n - f(x)\;.
\]
Suppose
that $vf(x)<\beta_{\bf h} + {\bf h}\cdot v(x-c)$ for $c\nearrow x$. This
in turn implies that the value of the right hand side is equal to $vf(x)$
and hence the value $vf(c)$ is fixed for for $c\nearrow x$, which
contradicts our assumption. This proves that $v f(x) \geq \beta_{\bf h}
+ {\bf h}\cdot v(x-c)$, and since
$v(x-K)$ has no maximal element, also
$vf(x)>\beta_{\bf h} + {\bf h}\cdot v(x-c)$ for $c\nearrow x$.
\end{proof}

Note that in the case of $\deg f =\mbox{\bf d}$ we can only say
that ``$\appr(f(x),K)$ is determined by (\ref{xfx})
up to $\dist_K (f(x),f(K))$''. But it may happen that
\[
\dist(f(x),K) > \dist_K(f(x),f(K))\;.
\]
This will usually be the case when $f$ is the minimal polynomial of $x$,
which yields that $f(x)=0$ and hence $\dist(f(x),K)=\dist(0,K)=\infty$.

\begin{example}
Take $(L,v)$ and $f(X)= X^p - X - t^{-1}$ with root $\vartheta$ as in
Example~\ref{d<degree}. As noted there, $v(\vartheta-L)=\{\alpha\in vL
\mid\alpha<0\}$, so $\dist(\vartheta,L)$ is the cut in $\widetilde{vL}$
whose lower cut set consists of all negative elements. This implies
that $\dist(\vartheta,L)=p\cdot \dist(\vartheta,L)$.

We have that $f(X)-f(c)=X^p-X-(c^p-c)=(X-c)^p-(X-c)$. Since
$v(\vartheta-c)<0$, it follows that $v(\vartheta-c)^p=p\cdot
v(\vartheta-c)<v(\vartheta-c)$ and therefore, $v(f(\vartheta)-f(c))=
v((\vartheta-c)^p-(\vartheta-c))= \min\{v(\vartheta-c)^p,
v(\vartheta-c)\}=p\cdot v(\vartheta-c)$.
%
%
This shows that $\bh_L(x:f)=p$ and $\beta_L(x:f)=0$.
We obtain that
\[
\dist(f(\vartheta),L)\>=\>\infty\> >\>\dist(\vartheta,L)\>=\>p\cdot
\dist(\vartheta,L)\>=\>\dist_L(f(\vartheta), f(L))\>,
\]
where the last equality holds by Lemma~\ref{LK1a}.
\end{example}

%
%
%
\section{The degree $[K(x)^h:K(f(x))^h]$}         \label{Atd,rad}
In the situation of (\ref{AF}), we ask for the degree
\[
[K(x)^h:K(f(x))^h]\;.
\]
This can indeed be calculated by means of ${\bf h}_K(x:f)$.
Inequality (\ref{,7eq}) below will explain the origin of the notation
``${\bf h}_K(x:f)$''. Note that $[K(x):K(f(x))]=\deg f$, while in
general, we may have that $[K(x)^h:K(f(x))^h]<\deg f$.

\begin{theorem}                                   \label{,7}
Assume (\ref{AF}). Then
\begin{equation}                            \label{,7eq}
[K(x)^h:K(f(x))^h] \leq {\bf h}_K(x:f)\;.
\end{equation}
\end{theorem}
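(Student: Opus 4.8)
The plan is to work inside the henselization $K(x)^h$ and estimate how far $x$ lies from the subfield $K(f(x))^h$. Set $F = K(f(x))^h$; then $F$ is henselian, $x$ is algebraic over $F$, and $[K(x)^h : F] = [F(x) : F]$ because $F(x) \supseteq K(x)$ is already henselian (an algebraic extension of a henselian field). So it suffices to bound $[F(x):F]$. First I would recall that over the henselian field $F$, the element $x$ has a well-defined distance $\dist(x,F)$, and that $\appr(x,F)$ is an immediate approximation type: indeed $(K(x)^h | K^h, v)$ is immediate (by Lemma~\ref{CK1} combined with Lemma~\ref{idllind}, noting that $K^h|K$ is defectless so $\appr(x,K)$ still has degree $\mathbf{d}$ over $K^h$ and $(K^h(x)|K^h,v)$ is immediate), and $f(x) \in F$, so $(K(x)^h | F, v)$ is immediate and hence $\appr(x,F)$ is immediate by part b) of Lemma~\ref{imme}.

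The key observation is that the value $v(x - a)$, for $a$ ranging suitably, is controlled by $v(f(x) - f(a))$ via equation~(\ref{bhmin+}) of Lemma~\ref{C4+}: for $c \nearrow x$ (over $K$), $v(f(x) - f(c)) = \beta_{\mathbf h} + \mathbf{h}\cdot v(x-c)$. Since $f(c) \in K(f(x)) \subseteq F$ and the values $v(x-c)$ are cofinal in $\supp \appr(x,K) \subseteq \supp\appr(x,K^h)$, while the values $v(f(x)-f(c))$ are cofinal in the support over $F$ of the approximation type of $f(x)$ shifted back — more precisely, combining~(\ref{xfx}) with the fact that $v(f(x) - b) = v(f(c) - b)$ for $c\nearrow x$ whenever $\deg(f-b) < \mathbf{d}$ — one gets the relation $\dist_K(x, K) $ relates to $\dist_F(x, F)$ through the factor $\mathbf{h}$. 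The clean way to package this: show that for any $a \in F$ with $v(x-a)$ large (beyond $\dist(x,K)$), one has $\appr(a, K) = \appr(x,K)$ by~(\ref{v>Leq2}), hence by Lemma~\ref{C4+} applied over $K$ the value $v(f(x) - f(a))$ still equals $\beta_{\mathbf h} + \mathbf{h}\cdot v(x-a)$; since $f(a) \in F$ and $f(x) \in F$, this forces $\mathbf{h}\cdot v(x - a) = v(f(x)-f(a)) - \beta_{\mathbf h} \in vF$. Thus $\mathbf{h}\cdot v(x - F) \subseteq vF$ in the relevant cofinal range, which says precisely that the approximation type $\appr(x,F)$ becomes ``$\mathbf{h}$-divisibly trivializable'' over $F$.

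From here the degree bound follows from the structure theory of immediate extensions. Since $x$ is algebraic over $F$ with $\appr(x,F)$ immediate, Corollary~\ref{nst} gives that $\appr(x,F)$ is an algebraic approximation type, and its degree $\mathbf{d}'$ satisfies $\mathbf{d}' \le [F(x):F] = [K(x)^h:K(f(x))^h]$; but I actually want the reverse-direction bound, so instead I would argue directly: take the minimal polynomial of $x$ over $F$; its roots are conjugates of $x$, and by Lemma~\ref{nfvfreal}-type reasoning the distances of these conjugates from $F$ are governed by the ramification-theoretic data, yielding that the number of distinct ``levels'' $v(x - a)$, $a \in F$, that can be surpassed is at most $\mathbf{h}$ because passing each level multiplies $v(x-c)$ by the fixed factor coming from~(\ref{bhmin+}) and $\mathbf h$ is exactly the multiplicity recorded there. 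Concretely: if $[F(x):F] = e\cdot f\cdot p^\nu$ by Ostrowski's lemma (the extension being immediate over $F$, so $e = f = 1$ and $[F(x):F] = p^\nu$), then $p^\nu = \deg \appr(x,F)$, and I would show $\deg\appr(x,F) \le \mathbf{h}$ by exhibiting that the associated minimal polynomial of $\appr(x,F)$ divides a polynomial of degree $\mathbf{h}$ built from $f$ — namely, $f(X) - f(x)$ has $x$ as a root, lies in $F[X]$, and by the computation above $\appr(x,F)$ does not fix its value, so $\deg\appr(x,F) \le \deg_X(f(X)-f(x))$ is not quite right ($\deg = n$); rather one uses that $v(f(x) - f(c)) = \beta_{\mathbf h} + \mathbf h \cdot v(x-c)$ means $f(X) - f(x)$ behaves, near $x$, like $(X-x)^{\mathbf h}$ times a unit, so the relevant irreducible factor of $f(X)-f(x)$ over $F$ vanishing at $x$ has degree $\le \mathbf h$.

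The main obstacle I anticipate is the last step: rigorously extracting ``$\deg \appr(x,F) \le \mathbf h$'' from the local behavior $v(f(x)-f(c)) = \beta_{\mathbf h} + \mathbf h\cdot v(x-c)$. The subtlety is that $f(X) - f(x)$ need not be irreducible over $F$, and one must identify the correct irreducible factor and bound its degree by $\mathbf h$ rather than by $n = \deg f$; this requires a Newton-polygon / Hensel-type argument over $F$ showing that exactly $\mathbf h$ of the roots of $f(X) - f(x)$ (counted with the right valuations) cluster around $x$ at the distance $\dist(x,F)$, while the other $n - \mathbf h$ roots are farther or nearer. I would carry this out using the Taylor expansion~(\ref{Taylorexp}) of $f$ at $x$, reading off the Newton polygon of $f(X+x) - f(x) = \sum_{i\ge 1} f_i(x)(X)^i$ over $F$, and invoking that $\mathbf h$ is, by the defining inequality~(\ref{xf-}), the index where the minimum $\beta_i + i\cdot v(x-c)$ is attained — which translates into $\mathbf h$ being the length of the initial segment of that Newton polygon corresponding to roots at distance $\ge \dist(x,F)$, hence the degree of the Hensel factor of $f(X) - f(x)$ through $x$.
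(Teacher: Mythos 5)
Your high-level plan coincides with the paper's: both proofs extract a degree-$\mathbf{h}$ Hensel factor of (a suitable rewriting of) $f(Z)-f(x)$ over $K(f(x))^h$ that vanishes at $x$, and conclude $[K(x)^h:K(f(x))^h]\leq\mathbf{h}$. The part of your argument that really carries the weight is the last paragraph (the Newton-polygon / Hensel factorization); the preceding material on $\mathbf{h}\cdot v(x-a)\in vF$ and on ``levels'' is either vacuous ($v(x-a)\in vF$ holds automatically since $(K(x)^h|F,v)$ is immediate and hence value-group preserving) or too vague to constitute a proof, and can be discarded.

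The genuine gap is in your last step, and you have in fact flagged the right place. You propose to read off the Newton polygon of $f(X+x)-f(x)=\sum_{i\geq 1}f_i(x)X^i$ ``over $F$''. But after substituting $X\mapsto X+x$, the coefficients $f_i(x)$ live in $K(x)$, not in $F=K(f(x))^h$. A Newton polygon of a polynomial with coefficients outside $F$ does not by itself produce a factorization over $F$, so the argument as stated does not extract an $F$-rational factor of degree $\leq\mathbf{h}$ through $x$. Equivalently: the untranslated polynomial $f(X)-f(x)$ does lie in $F[X]$, but its Newton polygon over $F$ encodes the values $v(\alpha)$ of its roots $\alpha$, not the distances $v(\alpha-x)$ that you need. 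The paper circumvents exactly this obstruction by Taylor-expanding at an approximating element $c\in K$ with $c\nearrow x$ (so that the Taylor coefficients $f_i(c)$ stay in $K\subseteq F$), setting $x_0=d(x-c)$ with $d\in K$ chosen so that $vx_0=0$, and normalizing by $d^{\mathbf{h}}/f_{\mathbf{h}}(c)$ so that the resulting polynomial $F(Z)=\frac{d^{\mathbf{h}}}{f_{\mathbf{h}}(c)}(\tilde f(Z)-\tilde f(x_0))\in{\cal O}_{K(f(x))}[Z]$ has residue exactly $(Z-x_0v)^{\mathbf{h}}$ (using here that $\mathbf{h}$ is a power of $p$, Proposition~\ref{L7}, together with the inequalities~(\ref{xf-}) and identity~(\ref{bhmin+}) from Lemma~\ref{C4+}). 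The strong form of Hensel's Lemma then produces $G\in K(f(x))^h[Z]$ of degree $\mathbf{h}$ with $G(x_0)=0$, which is exactly what your plan needs. So the fix for your proposal is to replace the expansion point $x$ by an element $c$ with $c\nearrow x$, and to keep track of the normalization; once that is done, your ``Newton polygon / Hensel factor'' step becomes the paper's argument.
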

\begin{proof}
We consider the Taylor expansion (\ref{Taylorexp}) of $f$ for an
arbitrary $c\in K$.
From Lemma~\ref{C4+}, we know that (\ref{xf-}) holds for
$1\leq i\leq \deg f$, $i\not={\bf h}={\bf h}_K(x:f)$ and $c\nearrow x$.
We choose such an element $c\in K$ and also an element $d\in K$ with
$vd=-v(x-c)$. We set $x_0=d\cdot (x-c)$; hence $vx_0=0$ and $K(x)=K(x_0)$.
Now (\ref{xf-}) takes the form
\begin{equation}                            \label{vcoeff1}
v(f_i(c)d^{-i}) > v(f_{\bf h}(c)d^{-{\bf h}})\;\;\mbox{\rm for
$i\not={\bf h}$, $1\leq i\leq \deg f$,}
\end{equation}
and (\ref{bhmin+}) reads as
\begin{equation}                            \label{vcoeff2}
v(f(x)-f(c)) = vf_{\bf h}(c)d^{-{\bf h}}\;.
\end{equation}
Further, from (\ref{Taylorexp}), (\ref{vcoeff1}) and (\ref{vcoeff2}) we
obtain:
\begin{eqnarray}                            \label{vcoeff3}
\left(\frac{d^{\bf h}}{f_{\bf h}(c)}\cdot (f(c)-f(x))\right)v &=&
\left(-\frac{d^{\bf h}}{f_{\bf h}(c)}\cdot
\sum_{i=1}^{\mbox{\scriptsize\rm deg} f} f_i(c)(x - c)^i \right)v\\
& = &\left(-\sum_{i=1}^{\mbox{\scriptsize\rm deg} f}
\frac{f_i(c)d^{-i}}{f_{\bf h}(c)d^{-{\bf h}}}\,  x_0^i\right)v
\> = \> -(x_0v)^{\bf h}\;.      \nonumber
\end{eqnarray}

Now we set
\[
\tilde{f}(Z)=\sum_{i=0}^{\mbox{\scriptsize\rm deg} f} f_i(c)d^{-i}
Z^i\;;
\]
hence $\tilde{f}(x_0)= f(x)$. Let us consider the polynomial
\[
F(Z) = \frac{d^{\bf h}}{f_{\bf h}(c)}\cdot (\tilde{f}(Z) -
\tilde{f}(x_0))
\]
whose coefficients lie in $K(\tilde{f}(x_0))=K(f(x))$ and for which
$x_0$ is a zero. Using (\ref{vcoeff1}) and (\ref{vcoeff2}), we compute
\[
F(Z) = \frac{d^{\bf h}}{f_{\bf h}(c)}\cdot (f(c)-f(x)) +
\sum_{i=1}^{\mbox{\scriptsize\rm deg} f}
\frac{f_i(c)d^{-i}}{f_{\bf h}(c)d^{-{\bf h}}}\, Z^i\in
{\cal O}_{K(f(x))}[Z]
\]
and, using also (\ref{vcoeff3}),
\[
F(Z)v = Z^{\bf h} - (x_0v)^{\bf h} = (Z - x_0v)^{\bf h}
\]
(where the latter equation holds because by Proposition~\ref{L7},
${\bf h}$ is a power of $p$). Using the strong Hensel's Lemma, that is,
property 3) of Theorem~4.1.3 in \cite{[Eng--P]}, we deduce that there
is a factorization
\[
F(Z) = G(Z)H(Z)
\]
over $K(f(x))^h$ with
\[
G(Z)v = Z^{\bf h} - (x_0v)^{\bf h}
\]
and
\[
\deg G(Z) = \deg G(Z)v = {\bf h}\;.
\]
A zero of $F(Z)$ which has residue $x_0v$ cannot be a zero of
$H(Z)$ since $H(Z)v = 1$, hence it must appear as a zero of $G(Z)$.
In particular, $G(x_0) = 0$. Since $G(Z) \in K(f(x))^h[Z]$ and
$\deg G(Z) = {\bf h}$, and since $K(x_0)^h=K(f(x))^h(x_0)$, this shows
that
\[
[K(x)^h:K(f(x))^h]=[K(x_0)^h:K(f(x))^h]\leq {\bf h}={\bf h}_K(x:f)\;.
\]
\end{proof}

\begin{corollary}
In addition to (\ref{AF}), assume that $(K,v)$ is henselian and $x$ is
algebraic over $K$. If $\mbox{\bf d}=[K(x):K]$ and $f$ is the minimal
polynomial of $x$ over $K$, then $p\geq 2$ and
\[
[K(x):K]={\bf h}_K(x:f)=p^t
\]
for some integer $t\geq 1$.
\end{corollary}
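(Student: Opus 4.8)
The plan is to read off the statement from Theorem~\ref{,7} and Proposition~\ref{L7} once the henselizations occurring in (\ref{,7eq}) are identified. First I would record these identifications. Since $f$ is the minimal polynomial of $x$ over $K$, we have $f(x)=0$, so $K(f(x))=K$; as $(K,v)$ is henselian this gives $K(f(x))^h=K$. On the other hand $K(x)$ is an algebraic extension of the henselian field $(K,v)$, hence is itself henselian, so $K(x)^h=K(x)$. With these substitutions, the hypotheses of (\ref{AF}) being in force, Theorem~\ref{,7} reads
\[
[K(x):K]\>=\>[K(x)^h:K(f(x))^h]\>\leq\>{\bf h}_K(x:f)\;.
\]

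Next I would supply the reverse inequality. Because $f$ is the minimal polynomial of $x$ we have $\deg f=[K(x):K]$, and by hypothesis ${\bf d}=[K(x):K]$, so $\deg f={\bf d}$; thus $f$ is an associated minimal polynomial for ${\bf A}=\appr(x,K)$ and we are genuinely in the situation of (\ref{AF}) with $n=\deg f\leq{\bf d}$. Lemma~\ref{C4+} then gives ${\bf h}_K(x:f)\leq\deg f=[K(x):K]$. Combining the two inequalities yields $[K(x):K]={\bf h}_K(x:f)$.

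Finally, Proposition~\ref{L7} asserts that ${\bf h}_K(x:f)$ is a power of $p$, say $[K(x):K]={\bf h}_K(x:f)=p^t$ with $t\geq 0$. It remains to see $p\geq 2$ and $t\geq 1$. Since $\appr(x,K)$ is immediate and $x$ is algebraic over $K$, Corollary~\ref{nst} shows $\appr(x,K)$ is an algebraic approximation type, and an algebraic approximation type has degree at least $2$ (an immediate approximation type always fixes the value of every linear polynomial). Hence ${\bf d}=[K(x):K]\geq 2$, so $p^t\geq 2$, which forces $p\geq 2$ and $t\geq 1$. (Alternatively, once $[K(x):K]={\bf d}$ is known one may invoke Proposition~\ref{degat} directly.)

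There is no real obstacle in this corollary: all the substance is already contained in Theorem~\ref{,7} and Proposition~\ref{L7}. The only step that needs a moment's care is the identification $K(x)^h=K(x)$ — i.e.\ that an algebraic extension of a henselian field is henselian — since this is what turns the henselized degree in (\ref{,7eq}) into the plain degree $[K(x):K]$, and keeping track of the equality $\deg f={\bf d}$ so that the upper bound ${\bf h}_K(x:f)\leq\deg f$ exactly matches the lower bound coming from Theorem~\ref{,7}.
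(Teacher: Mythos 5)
Your proof is correct and follows essentially the same route as the paper: identify $K(f(x))^h=K$ and $K(x)^h=K(x)$ via henselianity, apply Theorem~\ref{,7} for the lower bound, use $\bh_K(x:f)\leq\deg f=\mbox{\bf d}$ for the upper bound, and invoke Proposition~\ref{L7}. The only minor variation is at the end: to see $[K(x):K]\geq 2$ the paper simply notes that an immediate approximation type is nontrivial, hence $x\notin K$, whereas you route through Corollary~\ref{nst} and the fact that algebraic approximation types have degree at least $2$ — both are fine.
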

\begin{proof}
By hypothesis, we have $\mbox{\bf d}=[K(x):K]=\deg f$. Since $K$ is
henselian and $x$ is algebraic over $K$, we have that $K(x)$ is
henselian as well. In view of $f(x)=0$, an application of the foregoing
lemma shows that
\[
\deg f = [K(x):K] \leq {\bf h}_K(x:f) \leq \deg f\;.
\]
Consequently, equality holds everywhere.

Since $\appr(x,K)$ is immediate by assumption, it is nontrivial, hence
$x\notin K$ and ${\bf h}_K(x:f)=[K(x):K]> 1$. Proposition~\ref{L7}
yields that $p\geq 2$ and ${\bf h}_K(x:f)=p^t$ with $t\geq 1$.
\end{proof}

%
%
\section{The degree $[K(x)^h:K(y)^h]$}   \label{,ad2}
Throughout this section, we will work with the following situation:
\begin{equation}                  \label{,sit1}
\left\{\begin{array}{l}
\mbox{$(K,v)$ a valued field of rank 1}\\
\mbox{$(K(x)|K,v)$ an immediate extension such that $x\notin K^c$}\\
\mbox{and $\appr (x,K)$ is transcendental}\\
\mbox{$y\in K(x)^h$ transcendental over $K$.}
\end{array}\right.
\end{equation}
Note that by Corollary~\ref{KT2at}, the assumption that $\appr (x,K)$ is
transcendental implies that $x$ is transcendental over $K$. Furthermore,
if $(K,v)$ is algebraically maximal, then $\appr (x,K)$ is always
transcendental, provided that $(K(x)|K,v)$ is immediate and nontrivial.

We ask for the degree
\[
\;[K(x)^h:K(y)^h]\;.
\]
To treat this question and in particular to define the relative
approximation degree of $x$ over $y$, we look for a
polynomial $f\in K[X]$ such that
\begin{equation}                            \label{aty=atf}
\;v(y-f(x))\geq \dist(y,K)\;.
\end{equation}
We need some preparation.

\begin{lemma}                               \label{,1id}
If $K$ is of rank 1 and $K(x)|K$ is immediate, then $K[x]$ is dense
in $K(x)^h$.
\end{lemma}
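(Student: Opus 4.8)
\textbf{Plan for proving Lemma~\ref{,1id}.}
The claim is that $K[x]$ is dense in $K(x)^h$ when $(K,v)$ has rank~$1$ and $(K(x)|K,v)$ is immediate. The natural strategy is a two-step approximation: first show $K[x]$ is dense in $K(x)$, and then show $K(x)$ is dense in $K(x)^h$; composing the two density statements finishes the proof. For the second step I would invoke the standard fact that in a rank-$1$ valued field the henselization lies inside the completion, so $K(x)$ is already dense in $K(x)^h$ (one can see this directly: by the universal property of the henselization, $K(x)^h$ embeds into the completion $\widehat{K(x)}$, and $K(x)$ is dense in $\widehat{K(x)}$ by construction; rank~$1$ is exactly what guarantees that the topology defined by $v$ makes completions behave this way). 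So the substantive content is the first step: density of $K[x]$ in $K(x)$.

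For density of $K[x]$ in $K(x)$, the key input is that $(K(x)|K,v)$ is immediate, hence by Corollary~\ref{nst} and Lemma~\ref{imme} the approximation type $\appr(x,K)$ is immediate; and by Corollary~\ref{KT2at} (or by the discussion following Lemma~\ref{CK1}) we are in the transcendental case precisely when $\mathbf{d}=\infty$. Take an arbitrary quotient $g(x)/h(x)\in K(x)$ with $g,h\in K[X]$, $h(x)\neq 0$, and fix a target value $\gamma\in vK$. I want to find a polynomial $P(x)\in K[x]$ with $v\!\left(\frac{g(x)}{h(x)}-P(x)\right)\geq\gamma$. The idea is to approximate $1/h(x)$ by a polynomial. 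Because $\appr(x,K)$ is immediate, Lemma~\ref{CK1} (together with Lemma~\ref{C4+}) tells us that for $c\nearrow x$ we have $v(h(x)-h(c))>vh(x)=vh(c)$, and since the support of $\appr(x,K)$ has no maximal element, the difference $v(h(x)-h(c))-vh(x)$ can be made arbitrarily large. Writing $h(x)=h(c)(1+u)$ with $vu = v(h(x)-h(c))-vh(c)$ arbitrarily large, the geometric series $1/h(x)=h(c)^{-1}\sum_{k\geq 0}(-u)^k$ can be truncated: the partial sum $h(c)^{-1}\sum_{k=0}^{N}(-u)^k$ is a polynomial in $x$ (since $u\in K+Kx+\dots$, indeed $u=(h(c)-h(x))/h(c)\in K[x]$) and differs from $1/h(x)$ by something of value $\geq (N+1)vu + \text{(bounded)}$, which exceeds any prescribed bound for $N$ large. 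Multiplying by $g(x)\in K[x]$ and absorbing its value gives the desired polynomial approximation to $g(x)/h(x)$.

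The main obstacle I anticipate is making the ``geometric series truncation'' argument clean and rigorous without circularity: one must be careful that $u\in K[x]$ (which holds since $u = 1 - h(x)/h(c)$, a genuine polynomial in $x$ over $K$), and that the tail estimate $v\!\left(\frac{1}{h(x)} - h(c)^{-1}\sum_{k=0}^N (-u)^k\right) = v\!\left(\frac{(-u)^{N+1}}{h(c)\,h(x)}\right) = (N+1)vu - vh(c) - vh(x)$ really does tend to $\infty$ with $N$, which requires $vu>0$ — and this positivity is exactly the immediacy statement $v(h(x)-h(c))>vh(c)$ from Lemma~\ref{C4+}. A secondary point is the choice of $c\nearrow x$: one wants $vu$ itself to be large, not merely positive, and this is available because $\supp\appr(x,K)=v(x-K)$ has no maximal element, so by Lemma~\ref{C4+} the values $v(h(x)-h(c))$ increase without bound for $c\nearrow x$. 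Once these estimates are in place the rest is bookkeeping, and combining with the rank-$1$ fact that $K(x)$ is dense in $K(x)^h$ completes the proof.
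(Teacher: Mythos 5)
Your overall route matches the paper's: first reduce to density of $K[x]$ in $K(x)$ by using the rank-$1$ hypothesis to put $K(x)^h$ inside $K(x)^c$, then approximate $1/h(x)$ by a truncated geometric series built from an element of $K[x]$ that is close to $1$. The final computation you write down is the right one. However, there is a genuine error in the way you set it up: you claim twice that $vu = v(h(x)-h(c))-vh(c)$ can be made \emph{arbitrarily large} by taking $c\nearrow x$, appealing to the fact that $\supp\appr(x,K)$ has no maximal element. ``No maximal element'' does not mean ``cofinal in $vK$''. If $x\notin K^c$ (which is the interesting case — otherwise $K$ is already dense in $K(x)$), then $\dist(x,K)<\infty$ and the values $v(x-c)$, hence the values $v(h(x)-h(c))$, are bounded above in $vK$. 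So $vu$ cannot be driven to infinity; it can only be made positive.

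What actually makes the tail estimate $(N+1)vu - vh(c) - vh(x) > \gamma$ achievable is that $vu>0$ \emph{together with the archimedean property of $vK$}, i.e.\ rank $1$: given $vu>0$ and any target, some integer multiple of $vu$ exceeds it. You only invoke rank $1$ to get $K(x)$ dense in $K(x)^h$; the paper uses it a second time, precisely here, to choose $j\in\N$ with $j\cdot v(1-f(x)/c) > \alpha + vc$. Once you fix one $c$ with $vu>0$ (which immediacy gives via Lemma~\ref{imme}a — you don't even need $c\nearrow x$, a single approximant of $h(x)$ in $K$ suffices) and then appeal to archimedeanness to pick $N$, the argument closes. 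As written, your proposal leans on a false ``large $vu$'' claim and leaves the actual rank-$1$ input for the geometric series implicit, so it has a real gap even though the intended computation is correct.
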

\begin{proof}
Since any valued field of rank 1 is dense in its henselization, it
suffices to show that $K[x]$ is dense in $K(x)$. For this we only have
to show that for every $f(x)\in K[x]$ and every $\alpha\in vK$ there
exists an element $g(x)\in K[x]$ such that $v(g(x) - 1/f(x))> \alpha$.
Since $K(x)|K$ is immediate there is an element $c\in K$ satisfying
$v(c-f(x))>vf(x) = vc$, which yields that $v(1-f(x)/c) > 0$. By our
hypothesis on the rank which means that the value group $vK$ is
archimedian, there exists $j\in \N$ such that $j\cdot v(1-f(x)/c)>\alpha
+ vc$. Now we put $h(x) = 1-f(x)/c \in K[x]$ and compute
\begin{eqnarray*}
v\left(\frac{1}{f(x)}-c^{-1}\sum_{i=0}^{j-1}h(x)^i\right) & = &
  v\left(\frac{1}{c(1 - h(x))}-c^{-1}\sum_{i=0}^{j-1}h(x)^i\right)\\[.3cm]
& = & vc^{-1}h(x)^j = j\cdot v(1-f(x)/c) - vc > \alpha\;.
\end{eqnarray*}
As the sum is an element of $K[x]$, this proves our lemma.
\end{proof}

\begin{lemma}                      \label{Laty=atf}
Assume (\ref{,sit1}). Then $y\in K[x]^c\setminus K^c$ and there exists a
polynomial $f\in K[X]$ such that (\ref{aty=atf}) holds.
\end{lemma}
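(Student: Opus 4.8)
The statement has two parts: first, that $y\in K[x]^c\setminus K^c$ (where $K[x]^c$ denotes the closure of $K[x]$ in the completion, i.e.\ $K[x]^c = K(x)^c$ since $K[x]$ is dense there), and second, the existence of a polynomial $f\in K[X]$ with $v(y-f(x))\geq\dist(y,K)$. The plan is to derive the first part directly from Lemma~\ref{,1id} and then use it, together with the rank~1 hypothesis, to produce the desired $f$.

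First I would argue that $y\in K[x]^c$. By hypothesis $y\in K(x)^h$, and since $(K,v)$ has rank~1, so does $K(x)$, hence $K(x)$ is dense in $K(x)^h$; combined with Lemma~\ref{,1id}, $K[x]$ is dense in $K(x)^h$, so $K(x)^h$ and $K[x]$ have the same completion, and therefore $y$ lies in the completion of $K[x]$, i.e.\ $y\in K[x]^c$. For the statement $y\notin K^c$: if $y$ were in $K^c$, then $\dist(y,K)=\infty$, meaning $y$ lies in the completion of $K$; but $y$ is assumed transcendental over $K$, and an element of $K^c$ transcendental over $K$ would force $K^c\neq K$, which is fine, except then $\appr(y,K)$ would contain $vK$ in its support. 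I would instead argue more carefully: since $K(x)|K$ is immediate and $x\notin K^c$, one should check that no element of $K(x)^h$ transcendental over $K$ can lie in $K^c$ — indeed if $y\in K^c$ then $y$ is a limit of a Cauchy sequence from $K$, and combined with density considerations this contradicts $x\notin K^c$ together with the structure of the immediate extension. (This is the kind of routine-but-fiddly point I would spell out using Lemma~\ref{imme} and the rank~1 hypothesis.)

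Next, to produce $f$: since $y\in K[x]^c$, $y$ is a limit of a sequence of polynomials in $x$; that is, for every $\alpha\in vK$ there is a polynomial $f_\alpha\in K[X]$ with $v(y-f_\alpha(x))\geq\alpha$. Since $\dist(y,K)<\infty$ (this is exactly the content of $y\notin K^c$) and $(K,v)$ has rank~1, the lower cut set of $\dist(y,K)$ is bounded, so there is a single value $\alpha_0\in vK$ with $\alpha_0\geq\dist(y,K)$; taking $f=f_{\alpha_0}$ gives $v(y-f(x))\geq\alpha_0\geq\dist(y,K)$, which is \eqref{aty=atf}.

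The main obstacle I anticipate is the second half of the first claim, namely ruling out $y\in K^c$ — equivalently, showing $\dist(y,K)<\infty$ for a transcendental $y\in K(x)^h$. The point is that $x\notin K^c$ by hypothesis, and one must leverage this to conclude that no transcendental element of $K(x)^h$ can be infinitely close to $K$; the natural route is: if $y\in K^c$, pick $c_n\in K$ with $v(y-c_n)\to\infty$, and since $y\in K[x]^c$ is itself approximable by polynomials in $x$, combine these to show that $x$ itself would be approximable to arbitrary precision by elements that (after the transcendence of $\appr(x,K)$ is invoked) force $x\in K^c$, contradicting the hypothesis. Getting this reduction clean — in particular using that $\appr(x,K)$ is transcendental so that $K[x]$ with its valuation behaves like a polynomial ring over $K$ with values dictated by $\appr(x,K)$ (Lemma~\ref{CK1}) — is where the real work lies; everything else is bookkeeping with the rank~1 hypothesis.
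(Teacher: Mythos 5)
Your proof correctly identifies the three parts of the argument: $y\in K[x]^c$ follows from Lemma~\ref{,1id}, the existence of $f$ follows from $y\notin K^c$ together with the rank~1 hypothesis (the lower cut set of $\dist(y,K)$ then has an upper bound $\alpha_0\in vK$, and $y\in K[x]^c$ yields some $f$ with $v(y-f(x))\geq\alpha_0$), and the hard part is proving $y\notin K^c$. For that hard part, however, your sketch has a genuine gap. You propose to derive a contradiction by showing $x\in K^c$, arguing that closeness of $y$ to $K$ plus approximability of $y$ by polynomials in $x$ should ``combine'' to show $x$ is approximable over $K$. This does not work: from $v(f(x)-c)$ large you cannot extract anything about $v(x-c')$, and more fundamentally, even though $x$ is algebraic over $K(y)^h\subset K^c$, an algebraic extension of $K^c$ is not in general contained in $K^c$, so $x\in K^c$ is not the right target and need not follow.

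The paper's argument at this point is essentially different and is the key idea you are missing. Assuming $y\in K^c$, it considers the minimal polynomial $g(X)\in K(y)^h[X]$ of $x$ over $K(y)^h$ (note $x$ is algebraic over $K(y)^h$ since $y\in K(x)^h$ is transcendental over $K$). Since $K$ is dense in $K(y)^h$ (rank~1), the coefficients of $g$ can be approximated arbitrarily well by elements of $K$, and by \emph{continuity of roots} together with $\dist(x,K)<\infty$, one obtains a polynomial $\tilde g\in K[X]$ with a root $\tilde x\in\tilde K$ such that $v(x-\tilde x)\geq\dist(x,K)$. Lemma~\ref{v>Lambda}~b) then gives $\appr(x,K)=\appr(\tilde x,K)$, and since $\tilde x$ is algebraic over $K$, Corollary~\ref{nst} forces this to be an \emph{algebraic} approximation type, contradicting hypothesis (\ref{,sit1}). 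So the contradiction is with the transcendence of $\appr(x,K)$, not with $x\notin K^c$ directly, and the mechanism (continuity of roots applied to the minimal polynomial of $x$ over $K(y)^h$) is not present in your outline.
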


\begin{proof}
From Lemma~\ref{,1id}, we infer that $y\in K[x]^c$. Suppose that
$y\in K^c$. Then $K$ is dense in $K(y)$ and also in $K(y)^h$ since
$K(y)$ is dense in its henselization, being of rank 1 like $K$. Let
$g(X)\in K(y)^h[X]$ be the minimal polynomial of $x$ over $K(y)^h$. We
can choose polynomials $\tilde{g}(X)\in K[X]$ with coefficients
arbitrarily close to the corresponding coefficients of $g$. By the
continuity of roots (cf.\ Theorem~4.5 of [PZ]) and our assumption that
$x\notin K^c$, i.e., $\dist(x,K)<\infty$, we can find a suitable
polynomial $\tilde{g}$ with a suitable root $\tilde{x}\in \tilde{K}$
such that
\[
v(x-\tilde{x}) \geq\dist(x,K)\;.
\]
By Lemma~\ref{v>Lambda} b), this implies that
\[
\appr(x,K) = \appr(\tilde{x},K)\;.
\]
Since $\tilde{x}$ is algebraic over $K$, it follows by Corollary~\ref{nst}
that $\appr(\tilde{x},K)$ and hence $\appr(x,K)$ is an algebraic
approximation type over $K$, a contradiction to hypothesis
(\ref{,sit1}). This shows that $y\notin K^c$, i.e., $\dist(y,K)<\infty$.
As $y\in K[x]^c$, this shows the existence of a polynomial $f\in K[X]$
such that $v(y-f(x))\geq\dist(y,K)$.
\end{proof}

With $f$ as in this lemma, we define \glossary{{\bf h}$_K(x:y)$}
\[
\bh_K(x:y) \>:=\> \bh_K(x:f) \;\mbox{ and }\;
\beta_K(x:y) \>:=\> \beta_K(x:f)
\]
and call $\bh_K(x:y)$ the \bfind{relative approximation degree of
$y$ in $x$}\ (over $K$).

\begin{lemma}                               \label{bhwd}
The integers
$\bh_K(x:y)$ and $\beta_K(x:y)$ are well-defined, i.e., they do not
depend on the choice of $f(x)$ as long as $v(y-f(x)) \geq \dist(y,K)$ is
satisfied.
\end{lemma}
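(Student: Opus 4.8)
The plan is to deduce the lemma from Lemma~\ref{f-g fixed}. Take polynomials $f,g\in K[X]$ with $v(y-f(x))\geq\dist(y,K)$ and $v(y-g(x))\geq\dist(y,K)$; we must show $\bh_K(x:f)=\bh_K(x:g)$ and $\beta_K(x:f)=\beta_K(x:g)$. The key preliminary observation is that $\appr(y,K)$ is an \emph{immediate} approximation type. Indeed, $(K(x)|K,v)$ is immediate by hypothesis~(\ref{,sit1}), hence so is $(K(x)^h|K,v)$, and therefore so is its subextension $(K(y)|K,v)$; since $y$ is transcendental over $K$ we have $y\notin K$, so part~b) of Lemma~\ref{imme} gives that $\appr(y,K)$ is immediate.

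Granting this, I would apply the equivalence~(\ref{v>L-immeq}) of Lemma~\ref{v>L-imm} with $\appr(y,K)$ playing the role of the immediate approximation type: from $v(y-f(x))\geq\dist(y,K)$ it yields $\appr(f(x),K)=\appr(y,K)$, and likewise $\appr(g(x),K)=\appr(y,K)$, whence
\[
\appr(f(x),K)=\appr(g(x),K)\;.
\]
Since $\appr(y,K)$ is immediate, it is nontrivial, so $\appr(f(x),K)$ and $\appr(g(x),K)$ are nontrivial; hence $f(x),g(x)\notin K$ and $f,g$ are nonconstant. As $\appr(x,K)$ is transcendental we have $\mbox{\bf d}=\infty$, so the situation~(\ref{AF}) is in force with this $\appr(x,K)$ and with $f$ (resp.\ $g$) as its polynomial.

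Finally, both $f$ and $g$ have degree at most $\mbox{\bf d}=\infty$, and $\appr(x,K)$ fixes the value of $f-g$, because a transcendental approximation type fixes the value of every polynomial. Lemma~\ref{f-g fixed}, applied to $f$ and $g$, therefore gives $\bh_K(x:f)=\bh_K(x:g)$ and $\beta_K(x:f)=\beta_K(x:g)$, which is exactly the claim. I do not anticipate any genuine difficulty; the one point that needs care is recognizing that $\appr(y,K)$ is immediate, for this is what permits the use of the sharp equivalence in Lemma~\ref{v>L-imm}, as opposed to the weaker implication~(\ref{v>Leq2}) of Lemma~\ref{v>Lambda}, which would require $v(y-f(x))\geq\dist(f(x),K)$ — information not supplied directly by~(\ref{aty=atf}).
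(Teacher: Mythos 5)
Your proposal is correct and follows essentially the same route as the paper's proof: establish $\appr(f(x),K)=\appr(y,K)=\appr(g(x),K)$ and then invoke Lemma~\ref{f-g fixed} using the fact that $\appr(x,K)$ is transcendental. You are, if anything, more careful than the paper's terse argument, which simply cites Lemma~\ref{v>Lambda}: as you correctly note, implication~(\ref{v>Leq2}) alone requires $v(y-f(x))\geq\max\{\dist(y,K),\dist(f(x),K)\}$, and the missing half is supplied precisely by the observation that $\appr(y,K)$ is immediate, which licenses the sharp equivalence of Lemma~\ref{v>L-imm}. Spelling out that $(K(y)|K,v)$ is immediate (as a subextension of $(K(x)^h|K,v)$) and that consequently $f,g$ are nonconstant is a worthwhile tightening of the argument, not a departure from it.
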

\begin{proof}
If $g(x)$ is another polynomial in $K[x]$ such that
$v(y-g(x)) \geq \dist(y,K)$, then by Lemma~\ref{v>Lambda}, we have that
\[
\appr(g(x),K) = \appr(y,K) = \appr(f(x),K)\;,
\]
whence $\bh_K(x:g)=\bh_K(x:f)$ and $\beta_K(x:g)=\beta_K(x:f)$
by Lemma~\ref{f-g fixed} since $\appr(x,K)$ is transcendental.
\end{proof}

\pars
In the situation described in (\ref{,sit1}), we can prove
Theorem~\ref{,7} also for $y$ in place of $f(x)$ provided that the
extension $K(x)^h | K(y)^h$ is separable. For the proof, we need the
following lemma:

\begin{lemma}                     
Assume (\ref{,sit1})
and let $v(y-f(x)) \geq \dist(y,K)$. Then there exists an element $z$ in
the algebraic closure $\widetilde{K(y)}$ of $K(y)$ such that
\[
[K(y,z)^h:K(y)^h]\leq \bh = \bh_K(x:y)
\]
and
\[
v(x-z)\geq \frac{1}{\bh}\left(v(y-f(x)) - \beta_K(x:f)\right)\;.
\]
\end{lemma}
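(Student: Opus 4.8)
The plan is to imitate the proof of Theorem~\ref{,7}, with $y$ in the role of $f(x)$, and to control the error coming from the fact that $y$ only approximates $f(x)$. Write $\bh=\bh_K(x:y)=\bh_K(x:f)$, $\beta=\beta_K(x:f)$ and $\gamma=v(y-f(x))$, so that $\gamma\geq\dist(y,K)$ by hypothesis; we may assume $\gamma<\infty$, since $\gamma=\infty$ means $y=f(x)$, and then $z=x$ does the job by Theorem~\ref{,7}. The first step is to record the inequality that will keep everything integral. Since $y\in K(x)^h\setminus K$ and $(K(x)^h|K,v)$ is immediate, $\appr(y,K)$ is immediate, so by Lemma~\ref{v>L-imm} the hypothesis $v(y-f(x))\geq\dist(y,K)$ gives $\appr(y,K)=\appr(f(x),K)$, hence $\dist(y,K)=\dist(f(x),K)$. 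Combining this with the equality $\dist_K(f(x),f(K))=\beta+\bh\cdot\dist(x,K)$ of Lemma~\ref{LK1a} and with the fact that $v(x-c)<\dist(x,K)$ for $c\nearrow x$ (immediacy of $\appr(x,K)$), we obtain the \emph{strict} inequality
\[
\gamma\;\geq\;\dist(f(x),K)\;\geq\;\beta+\bh\cdot\dist(x,K)\;>\;\beta+\bh\cdot v(x-c)\qquad\mbox{for }c\nearrow x.
\]

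Next I would take over the construction from the proof of Theorem~\ref{,7}: choose $c\nearrow x$ for which (\ref{xf-}) and (\ref{bhmin+}) hold, choose $d\in K$ with $vd=-v(x-c)$, and set $x_0=d(x-c)$, so that $vx_0=0$, $K(x)=K(x_0)$, the polynomial $\tilde f(Z)=\sum_i f_i(c)d^{-i}Z^i$ satisfies $\tilde f(x_0)=f(x)$, and (\ref{vcoeff1})--(\ref{vcoeff3}) hold. The new polynomial is
\[
F_y(Z)\;=\;\frac{d^{\bh}}{f_{\bh}(c)}\bigl(\tilde f(Z)-y\bigr)\;=\;\frac{d^{\bh}}{f_{\bh}(c)}\bigl(f(c)-y\bigr)+\sum_{i=1}^{\deg f}\frac{f_i(c)d^{-i}}{f_{\bh}(c)d^{-\bh}}\,Z^i,
\]
whose coefficients lie in $K+Ky$. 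Using (\ref{vcoeff1}) for the $Z^i$-coefficients with $i\neq\bh$, and the displayed inequality above — which yields $v(f(x)-y)=\gamma>v(f(x)-f(c))=v(f_{\bh}(c)d^{-\bh})$, hence $v(f(c)-y)=v(f_{\bh}(c)d^{-\bh})$ — I would check that every coefficient of $F_y$ has value $\geq 0$, i.e.\ $F_y\in{\cal O}_{K(y)^h}[Z]$. Moreover, setting $e=\frac{d^{\bh}}{f_{\bh}(c)}(f(x)-y)$, one computes $v(e)=\gamma-\beta-\bh\cdot v(x-c)>0$, and since the constant term of $F_y$ equals $\frac{d^{\bh}}{f_{\bh}(c)}(f(c)-f(x))+e$, it has residue $-(x_0v)^{\bh}$ by (\ref{vcoeff3}); therefore $F_y(Z)v=Z^{\bh}-(x_0v)^{\bh}=(Z-x_0v)^{\bh}$, the last equality because $\bh$ is a power of $p$ (Proposition~\ref{L7}). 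This verification that the perturbed polynomial is still integral over $K(y)^h$ with the same reduction is the heart of the matter, and it is exactly where the hypothesis $v(y-f(x))\geq\dist(y,K)$ enters.

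The rest is a Hensel-and-pigeonhole argument. Applying the strong Hensel's Lemma over $K(y)^h$ to the factorization $F_y(Z)v=(Z-x_0v)^{\bh}\cdot 1$ produces $F_y=G_yH_y$ over $K(y)^h$ with $G_y$ monic of degree $\bh$, $G_y(Z)v=(Z-x_0v)^{\bh}$, and $H_y(Z)v=1$. Let $z_1,\dots,z_{\bh}$ be the roots of $G_y$ in $\widetilde{K(y)}$; for the unique extension of $v$, each $z_kv$ is a root of $(Z-x_0v)^{\bh}$, so $v(x_0-z_k)>0$, and monicity of $G_y$ gives $v(G_y(x_0))=\sum_k v(x_0-z_k)$. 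On the other hand $H_y(x_0)v=1\neq 0$ forces $v(H_y(x_0))=0$, whence $\sum_k v(x_0-z_k)=v(F_y(x_0))=v(e)$, because $F_y(x_0)=\frac{d^{\bh}}{f_{\bh}(c)}(f(x)-y)=e$. Thus some root $z'$ of $G_y$ satisfies $v(x_0-z')\geq\frac1{\bh}v(e)$. I would then put $z=c+z'/d\in\widetilde{K(y)}$. As $c,d\in K$, we have $K(y)^h(z)=K(y)^h(z')$, and $K(y,z)^h=K(y)^h(z)$ (the field $K(y)^h(z)$ is henselian and contains $K(y,z)$, while $K(y,z)^h$ is henselian and contains $K(y)^h$ and $z$), so $[K(y,z)^h:K(y)^h]=[K(y)^h(z'):K(y)^h]\leq\deg G_y=\bh$.

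Finally, the valuation estimate falls out by a telescoping in which the $v(x-c)$ contributions cancel:
\[
v(x-z)\;=\;v\!\left(\frac{x_0-z'}{d}\right)\;=\;v(x_0-z')+v(x-c)\;\geq\;\frac1{\bh}v(e)+v(x-c)\;=\;\frac1{\bh}(\gamma-\beta)\;=\;\frac1{\bh}\bigl(v(y-f(x))-\beta_K(x:f)\bigr),
\]
which is the asserted inequality. So the two places needing care are: establishing $F_y\in{\cal O}_{K(y)^h}[Z]$ with $F_y(Z)v=(Z-x_0v)^{\bh}$ (the bookkeeping of valuations in the second step, which is where the hypothesis is used), and the observation in the third step that averaging over the $\bh$ roots of $G_y$ yields a root close enough to $x_0$ to make the final bound exact.
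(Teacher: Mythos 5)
Your proof is correct and follows the same route as the paper's: construct the perturbed polynomial $F_y(Z)=\frac{d^{\bhsc}}{f_{\bhsc}(c)}(\tilde f(Z)-y)$ (the paper's $F^\circ$), verify it is integral over $K(y)^h$ with reduction $(Z-x_0v)^{\bhsc}$, factor by Hensel's Lemma, bound $v(G_y(x_0))=vF_y(x_0)$ via the product over roots to extract a close root $z'$, and set $z=c+z'/d$. Your derivation of the key strict inequality $\gamma>\beta+\bh\cdot v(x-c)$ goes through $\dist(x,K)$ rather than directly through $\dist(y,K)>v(y-f(c))$ as the paper does, but this is a cosmetic difference and the remaining bookkeeping matches the paper step for step.
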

\begin{proof}
Recall that $\bh = \bh_K(x:y)=\bh_K(x:f)$. We put $r:= y-f(x)$. We
choose $c,d\in K$, $x_0$ and $F(Z)$ as in the proof of Theorem~\ref{,7}.
Then
\begin{eqnarray*}
v r \>\geq\> \dist(y,K) &>& v(y-f(c)) \>=\> v(f(x)-f(c))\\
&=& v(f_{\bhsc} (c)(x - c)^{\bhsc}) \>=\> v(f_{\bhsc} (c)d^{-\bhsc})\;.
\end{eqnarray*}
This shows that
\[
F^{\circ}(Z):= F(Z) - \frac{d^{\bhsc}}{f_{\bhsc}(c)}\cdot r
= \frac{d^{\bhsc}}{f_{\bhsc}(c)}\cdot (\tilde{f}(Z) - y) \>\in\>
{\cal O}_{K(y)}[Z]
\]
has the same reduction as $F(Z)$. We find, as for $F(Z)$, that
$F^{\circ}(Z)$ admits a factorization
\[
F^{\circ}(Z) = G^{\circ}(Z)H^{\circ}(Z)
\]
over $K(y)^h$ with $G^{\circ}(Z)v = Z^{\bhsc} - (x_0v)^{\bhsc}$,
$G^{\circ}$ monic, $\deg G^{\circ}(Z) = \deg G^{\circ}(Z)v = \bh$
and $H^{\circ}(Z)v = 1$. Note that $vF^{\circ}(x_0) = vG^{\circ}(x_0)$
since $x_0\in {\cal O}_{K(x)}$. Recall that $F(x_0)=0$. Consequently,
from
\[
F^{\circ}(x_0) = -\frac{d^{\bhsc}}{f_{\bhsc}(c)}\cdot r
\]
it follows that, with $\beta_{\bhsc} =vf_{\bhsc}(c)=\beta_K(x:f)$,
\[
v(d^{\bhsc} r) - \beta_{\bhsc} = vF^{\circ}(x_0) =
vG^{\circ}(x_0)\;.
\]
Hence there must exist a root $z_{j_0}$ of
\[G^{\circ}(Z)=\prod_{1\leq j\leq \bhsc} (Z-z_j)\;,\;\; z_j\in
\widetilde{K(y)}\]
with
\[
v(x_0-z_{j_0})\geq \frac{1}{\bh} \left(v(d^{\bhsc} r)
-\beta_{\bhsc}\right)\;,
\]
which is equivalent to
\[
v(x-(d^{-1}z_{j_0} + c))\geq
\frac{1}{\bh} \left(vr -\beta_{\bhsc}\right)=
\frac{1}{\bh}\left(v(y-f(x)) - \beta_K(x:f)\right)\;.
\]
Now $z:=d^{-1}z_{j_0} + c$ is the element of our assertion, since
it satisfies $K(y,z) = K(y,z_{j_0})$ and thus $[K(y,z)^h:K(y)^h]\leq\bh$.
\end{proof}

\begin{proposition}                                \label{,6prop}
Assume (\ref{,sit1}).
If $K(x)^h|K(y)^h$ is separable, then
\[
[K(x)^h:K(y)^h] \>\leq\> \bh_K(x:y)\;.
\]
\end{proposition}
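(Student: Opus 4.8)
The idea is to approximate $x$ arbitrarily well by elements that are algebraic over $K(y)$ and that, by the preceding lemma, generate extensions of $K(y)^h$ of degree at most $\bh_K(x:y)$, and then to pass to the limit via Krasner's Lemma; the separability assumption will be used only at this last step. Write $E:=K(y)^h$; since $y\in K(x)^h$ and $K(x)^h$ is henselian, $E\subseteq K(x)^h$. First I would record some purely field-theoretic facts. Because $x$ is transcendental over $K$ we have $\trdeg(K(x)^h|K)=1=\trdeg(K(y)|K)$, so $K(x)^h|E$ is algebraic; in particular $x$ is algebraic over $E$. Since every algebraic extension of a henselian field is again henselian, both $E$ and $E(x)$ are henselian, whence $K(x)^h=E(x)$ and $[K(x)^h:K(y)^h]=[E(x):E]=:m<\infty$. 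Moreover, for any $z\in\widetilde{K(y)}$ the field $K(y,z)^h$ is henselian and contains both $K(y)$ (hence $E$) and $z$, so it contains $E(z)$; therefore $[E(z):E]\leq[K(y,z)^h:K(y)^h]$.

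Next I would build an approximating sequence. By Lemma~\ref{Laty=atf}, $y\notin K^c$, so $\dist(y,K)<\infty$, and by Lemma~\ref{,1id}, $K[x]$ is dense in $K(x)^h$; since $vK$ is archimedean and nontrivial, one can choose polynomials $f_n\in K[X]$ with $v(y-f_n(x))>\dist(y,K)$ for every $n$ and with $v(y-f_n(x))\to\infty$. By Lemma~\ref{bhwd}, the quantities $\bh_K(x:f_n)=\bh_K(x:y)=:\bh$ and $\beta_K(x:f_n)=\beta_K(x:y)=:\beta$ are then independent of $n$. Applying the preceding lemma to each $f_n$ produces $z_n\in\widetilde{K(y)}$ with $[K(y,z_n)^h:K(y)^h]\leq\bh$ and $v(x-z_n)\geq\frac{1}{\bh}\bigl(v(y-f_n(x))-\beta\bigr)$. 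Hence $v(x-z_n)\to\infty$, and by the previous paragraph $[E(z_n):E]\leq\bh$ for all $n$.

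Finally I would apply Krasner's Lemma over the henselian field $E$. This is where the hypothesis that $K(x)^h=E(x)$ is separable over $K(y)^h=E$ is needed: it ensures that $x$ has exactly $m=[E(x):E]$ distinct conjugates $x=x_1,\dots,x_m$ over $E$ inside $E\sep$, so that $\delta:=\max_{2\leq j\leq m}v(x-x_j)$ is a genuine element of $\widetilde{vK}$ (not $\infty$). Choosing $n$ large enough that $v(x-z_n)>\delta$, Krasner's Lemma gives $E(x)\subseteq E(z_n)$, and therefore $[K(x)^h:K(y)^h]=m=[E(x):E]\leq[E(z_n):E]\leq\bh=\bh_K(x:y)$, which is the assertion.

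The step I expect to be most delicate is the bookkeeping of the first paragraph, namely pinning down the identification $K(x)^h=E(x)$ and the degree comparison $[E(z):E]\leq[K(y,z)^h:K(y)^h]$ between the various henselizations, together with the uniformity in the second paragraph, i.e.\ that $\bh$ and $\beta$ can be taken the same for all $f_n$ — which is precisely the content of Lemma~\ref{bhwd}. The separability assumption enters only to make Krasner's Lemma applicable; without it the conjugates of $x$ over $K(y)^h$ need not be distinct and this final limiting argument breaks down, which is the reason for restricting the proposition to the separable case.
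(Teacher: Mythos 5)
Your proposal is correct and follows essentially the same strategy as the paper: approximate $y$ by $f(x)\in K[x]$ with $v(y-f(x))$ large, use the immediately preceding lemma to produce $z\in\widetilde{K(y)}$ close to $x$ with $[K(y,z)^h:K(y)^h]\leq\bh$, and then conclude $x\in K(y)^h(z)$ via Krasner's Lemma, the separability hypothesis guaranteeing that the Krasner bound $\delta$ (the paper's $\alpha$) is finite. The paper makes a single choice of $f$ satisfying $v(y-f(x))>\beta_K(x:y)+\bh\alpha$ rather than taking a sequence $f_n$ with $v(y-f_n(x))\to\infty$, and leaves the bookkeeping $K(x)^h=K(y)^h(x)$ and $K(y)^h(z)=K(y,z)^h$ implicit; otherwise the argument is the same.
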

\begin{proof}
Set
\[
\alpha:=\max\{v(\sigma x - x)\mid\sigma\in\mbox{Gal}(\widetilde{K(y)}
|K(y)^h)\mbox{ with } \sigma x\not= x\}\;.
\]
Then $\alpha<\infty$ since $K(x)^h|K(y)^h$ is separable.
Now, by Lemma~\ref{Laty=atf} we can choose $f(x)\in K[x]$ such that
$v(y-f(x))\geq\dist(y,K)=\dist(f(x),K)$ as well as
\[
v(y-f(x)) \> >\> \beta_K(x:y) + \bh\alpha\>=\>\beta_K(x:f) +
\bh\alpha\;,
\]
where $\bh = \bh_K(x:y)$.
Using the foregoing lemma, we choose $z\in \widetilde{K(y)}$ such that
\[
v(x-z)\>\geq\>\frac{1}{\bh}\left(v(y-f(x)) - \beta_K(x:f)\right)
\> >\>\alpha\;,
\]
and $[K(y,z)^h:K(y)^h]\leq \bh$. In view of our separability condition,
we can deduce by Krasner's Lemma (see \cite{[Eng--P]}, Theorem~4.1.7)
that $x\in K(y)^h (z)$. This yields that $[K(x,y)^h:K(y)^h]\leq
[K(y,z)^h:K(y)^h]\leq \bh$. Since $y\in K(x)^h$ by assumption,
$K(x,y)^h=K(x)^h$ and thus $[K(x)^h:K(y)^h]\leq\bh$, as asserted.
\end{proof}

In order to prove the assertion of the proposition without the
separability condition, we need the following tool.

\begin{lemma}                     \label{,tr}
Assume that (\ref{,sit1}) holds. Then it also holds for $y$
in place of $x$. So if $z\in K(y)^h$ is transcendental over $K$,
then $\bh_K(y:z)$ is defined. In this situation, $\bh_K(x:z)=
\bh_K(x:y)\cdot\bh_K(y:z)$.
\end{lemma}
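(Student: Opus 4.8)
The statement asserts two things: first, that the setup (\ref{,sit1}) passes from $x$ to $y$, so that $\bh_K(y:z)$ makes sense for $z \in K(y)^h$ transcendental over $K$; second, the multiplicativity $\bh_K(x:z) = \bh_K(x:y)\cdot\bh_K(y:z)$. I would handle these in that order.

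For the transfer of (\ref{,sit1}): the only nontrivial clauses are that $(K(y)|K,v)$ is immediate, that $y \notin K^c$, and that $\appr(y,K)$ is transcendental. That $y \notin K^c$ was already shown in Lemma~\ref{Laty=atf}. For immediacy: $y \in K(x)^h$ and $(K(x)^h|K,v)$ is immediate (being an extension of the immediate $(K(x)|K,v)$ by the henselization, which is immediate), so every subextension, in particular $(K(y)|K,v)$, is immediate. Finally, $\appr(y,K)$ is transcendental: since $y$ is transcendental over $K$ and $(K(y)|K,v)$ is immediate and nontrivial, $\appr(y,K)$ has a well-defined degree; if it were algebraic of degree $\mbox{\bf d}'$, then by Proposition~\ref{degat} applied to $K^h$ (or directly, using Corollary~\ref{nst} on an algebraic realization given by Theorem~\ref{KT3at}) there would be a proper immediate algebraic extension realizing it, and then an argument as in Lemma~\ref{Laty=atf} — approximating the minimal polynomial of a realizing algebraic element and using continuity of roots together with $y \notin K^c$ — produces an algebraic element with the same approximation type as $x$, contradicting the transcendence of $\appr(x,K)$. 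Actually the cleanest route is: $y$ is transcendental, $(K(y)|K,v)$ immediate nontrivial, so $\appr(y,K)$ is an immediate nontrivial approximation type; if it were algebraic, pick a polynomial $g \in K[X]$ whose value it does not fix, realize $\appr(y,K)$ by an algebraic element $\tilde y$ (Lemma~\ref{nfvfreal}), and then run the continuity-of-roots argument of Lemma~\ref{Laty=atf} backwards to find $\tilde x$ algebraic over $K$ with $\appr(\tilde x, K) = \appr(x,K)$, contradicting (\ref{,sit1}).

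For the multiplicativity: choose $f \in K[X]$ with $v(y - f(x)) \geq \dist(y,K)$, so $\bh_K(x:y) = \bh_K(x:f)$. Similarly, working inside $K(y)$, which satisfies (\ref{,sit1}) in place of $K(x)$, choose $g \in K[Y]$ with $v(z - g(y)) \geq \dist(z, K)$, so $\bh_K(y:z) = \bh_K(y:g)$. The natural candidate for a polynomial witnessing $\bh_K(x:z)$ is the composite $g(f(X)) \in K[X]$; I would show $v(z - g(f(x))) \geq \dist(z,K)$, so that $\bh_K(x:z) = \bh_K(x: g\circ f)$, and then prove the chain rule $\bh_K(x: g\circ f) = \bh_K(y: g)\cdot \bh_K(x: f)$. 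For the first point: $v(z - g(f(x))) \geq \min\{v(z - g(y)),\ v(g(y) - g(f(x)))\}$; the first term is $\geq \dist(z,K)$ by choice of $g$, and for the second, since $v(y - f(x)) \geq \dist(y,K) = \dist(f(x),K)$ and $\appr(y,K)$ is transcendental so $g$ has degree below $\deg \appr(y,K)$, Lemma~\ref{C4+} (in the form (\ref{bhmin+}) and the remark after, or Corollary~\ref{+vgc}) gives $v(g(y) - g(f(x)))$ very large — at least $\dist_K(z, \cdot)$-size — I would make this precise via the increasing-value estimates. For the chain rule itself: apply the defining relation $v(g(f(x)) - g(f(c))) = \beta_K(x:g\circ f) + \bh_K(x:g\circ f)\cdot v(x-c)$ for $c \nearrow x$, and compare with the composition: $v(g(f(x)) - g(f(c))) = \beta_K(x:g) + \bh_K(x:g)\cdot v(f(x) - f(c))$ — using that $f(c) \nearrow f(x)$ by Lemma~\ref{LK1a}, equation (\ref{xfx}), and that $\appr(f(x),K)$ is transcendental (Corollary~\ref{+gf}, since $\appr(x,K)$ is) so the relative approximation degree of $g$ at $f(x)$ is $\bh_K(f(x):g) = \bh_K(y:g) = \bh_K(y:z)$ — and then substitute $v(f(x)-f(c)) = \beta_K(x:f) + \bh_K(x:f)\cdot v(x-c)$ to get $v(g(f(x)) - g(f(c))) = \big(\beta_K(x:g) + \bh_K(x:g)\beta_K(x:f)\big) + \bh_K(x:g)\bh_K(x:f)\cdot v(x-c)$. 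Uniqueness of the exponent (the values $v(x-c)$ run cofinally and unbounded, so the linear coefficient is determined) forces $\bh_K(x: g\circ f) = \bh_K(x:g)\cdot\bh_K(x:f) = \bh_K(y:z)\cdot\bh_K(x:y)$.

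**Main obstacle.** The delicate point is verifying that $\bh_K(f(x):g)$ — the relative approximation degree of $g$ computed \emph{at the element $f(x)$ over $K$} — actually equals $\bh_K(y:g)$ where $y$ satisfies only $v(y - f(x)) \geq \dist(y,K)$ rather than $y = f(x)$. This rests on $\appr(y,K) = \appr(f(x),K)$ (Lemma~\ref{v>Lambda}, since both distances are $\dist(y,K) = \dist(f(x),K)$) together with Lemma~\ref{f-g fixed} to transfer $\bh$ and $\beta$ across elements of the same transcendental approximation type; one must check the hypothesis of Lemma~\ref{f-g fixed}, namely that $\appr(f(x),K)$ fixes the value of the difference of the two polynomials representing $z$, which is automatic since that difference has degree below $\deg\appr(f(x),K) = \infty$. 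The bookkeeping with which field the derivatives and $\beta_i$'s are taken over, and making sure every invocation of Lemma~\ref{C4+} is legitimate (degree strictly below the relevant $\mbox{\bf d}$), is where care is needed; none of it is deep, but it is where the proof could go wrong.
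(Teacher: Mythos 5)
Your overall strategy coincides with the paper's: show that~(\ref{,sit1}) passes to $y$, reduce the multiplicativity to polynomials $y=f(x)$, $z=g(f(x))$, and then compose the defining relations $v(f(x)-f(c))=\beta_K(x{:}f)+\bh_K(x{:}f)v(x-c)$ and $v(g(f(x))-g(f(c)))=\beta_K(f(x){:}g)+\bh_K(f(x){:}g)v(f(x)-f(c))$ using $f(c)\nearrow f(x)$ (Lemma~\ref{LK1b}) and uniqueness of the exponent. This core computation is exactly what the paper does. A few points where you deviate or where the argument as written has soft spots:

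First, your ``cleanest route'' for the transcendence of $\appr(y,K)$ --- realizing a putative algebraic $\appr(y,K)$ by some $\tilde y$ and then running the continuity-of-roots argument of Lemma~\ref{Laty=atf} ``backwards'' to produce an algebraic $\tilde x$ with $\appr(\tilde x,K)=\appr(x,K)$ --- does not go through as stated: the continuity-of-roots step in Lemma~\ref{Laty=atf} relies on $K$ being dense in $K(y)^h$, which is exactly what one has when $y\in K^c$, not in the present situation. There is no ``backwards'' version. The correct and shorter argument, which the paper uses and which you implicitly have at hand, is simply: $v(y-f(x))\geq\dist(y,K)$ gives $\appr(y,K)=\appr(f(x),K)$, and Corollary~\ref{+gf} says $\appr(f(x),K)$ is transcendental because $\appr(x,K)$ is.

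Second, you invoke Lemma~\ref{f-g fixed} to justify $\bh_K(f(x){:}g)=\bh_K(y{:}g)$. That lemma compares $\bh_K(x{:}f)$ with $\bh_K(x{:}g)$ for two polynomials and a \emph{fixed} element $x$; it is not designed to transfer $\bh$ across two different elements $f(x)$ and $y$ with equal approximation type. What you actually need is the observation that $\bh_K(a{:}g)$ depends only on $\appr(a,K)$ when the latter is transcendental: the defining data in Lemma~\ref{C4+} --- the fixed values $\beta_i=vg_i(c)$ for $c\nearrow a$ and the cofinal set $\{v(a-c)\}$ --- are all read off from $\appr(a,K)$. That is the justification.

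Third, your reduction first fixes $f$ with $v(y-f(x))\geq\dist(y,K)$ and then argues (somewhat sketchily) that $v(g(y)-g(f(x)))\geq\dist(z,K)$; the precise form of that step is $\appr(g(y),K)=\appr(g(f(x)),K)$ by Lemma~\ref{LK1b}, whence $v(g(y)-g(f(x)))\geq\dist(g(y),K)=\dist(z,K)$ by Lemma~\ref{v>L-imm}. Note your direct appeal to Lemma~\ref{C4+} does not apply literally since $f(x)\notin K$. The paper avoids this by choosing $f$ adaptively so that \emph{both} $v(y-f(x))\geq\dist(y,K)$ and $v(g(y)-g(f(x)))\geq\dist(g(y),K)$ hold at once (possible since $y\in K[x]^c$, so $v(y-f(x))$ can be made arbitrarily large), which is slightly cleaner. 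Either route works once made precise. Finally, note the small notational slip: in the chain rule you write $\beta_K(x{:}g)$, $\bh_K(x{:}g)$ where you mean $\beta_K(f(x){:}g)$, $\bh_K(f(x){:}g)$.
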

\begin{proof}
Recall that from Lemma~\ref{Laty=atf} we have that $y \notin K^c$.
Moreover, as $K(y)|K$ is a subextension
of the immediate extension $K(x)^h|K$, it is also
immediate. For the definition of $\bh_K(x:y)$ we have already used the
fact that there exists some polynomial $f(x)$ such that $\appr(y,K) =
\appr(f(x),K)$; by Corollary~\ref{+gf}, this approximation type is
transcendental since $\appr(x,K)$ is. We have proved that (\ref{,sit1})
holds for $y$ in place of $x$.

Let us now prove the multiplicativity. Since $\bh_K(y:z)=\bh_K(y:g(y))$
whenever $v(z-g(y))\geq\dist(z,K)$, it suffices to show our assertion
under the additional assumption $z=g(y)\in K[y]$. Furthermore, because
of $y\in K[x]^c\setminus K^c$ we may choose $f(x)\in K[x]$ so that
$v(y-f(x))\geq\dist(y,K)$ and $v(g(y)-g(f(x)))\geq\dist(g(y),K)$; hence
it suffices to show our assertion under the assumption that $y=f(x)\in
K[x]$ and $z=g(f(x))\in K[x]$. Since by hypothesis, $\appr(x,K)$ is
transcendental, it fixes the value of every polynomial over $K$, and
thus we know from Lemma~\ref{LK1b} that $f(c)\nearrow f(x)$ whenever
$c\nearrow x$. Also since $\appr(f(x),K)$ is transcendental, it fixes
the value of every polynomial over $K$, and thus for $f(c)\nearrow
f(x)$,
\begin{eqnarray*}
v(g(f(x))-g(f(c))) &=& v g_{\bhsc_1}(f(c)) + \bh_1\cdot v(f(x)-f(c))\\
&=& v g_{\bhsc_1}(f(c)) + \bh_1\cdot\left(v f_{\bhsc_2}(c)
+ \bh_2 \cdot v(x-c)\right)\\
&=& \beta + \bh_1\cdot \bh_2\cdot v(x-c)
\end{eqnarray*}
where $\bh_1=\bh_K(f(x):g(f(x)))$, $\bh_2=\bh_K(x:f)$ and
$\beta=v g_{\bhsc_1}(f(c)) + \bh_1\cdot v f_{\bhsc_2}(c)$. This shows
that
\[
\bh_K(x:g(f(x))) = \bh_1\cdot \bh_2 = \bh_2\cdot \bh_1
=\bh_K(x:f)\cdot\bh_K(f(x):g(f(x)))\;,
\]
as asserted.
\end{proof}

\begin{theorem}                             \label{,6}
Assume (\ref{,sit1}). Then
\[
[K(x)^h:K(y)^h] \>\leq\> \bh_K(x:y)\;.
\]
\end{theorem}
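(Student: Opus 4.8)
The plan is to reduce the general statement to two cases already established --- the separable case, which is Proposition~\ref{,6prop}, and, stacked on top of it, the purely inseparable case --- glued together by the multiplicativity of the relative approximation degree proved in Lemma~\ref{,tr}. If $K(x)^h|K(y)^h$ is separable (in particular if $p=1$, i.e.\ $\chara Kv=0$), Proposition~\ref{,6prop} gives the assertion at once. So assume $K(x)^h|K(y)^h$ is not separable; then $p\geq2$. The extension $K(x)^h|K(y)^h$ is finite (it equals $K(y)^h\cdot K(x,y)$, and $K(x,y)|K(y)$ is finite because $x$ is algebraic over $K(y)$, both being transcendental of transcendence degree $1$ over $K$); let $p^e$ be its inseparable degree, so $e\geq1$, and set $x':=x^{p^e}$, which is transcendental over $K$.

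The key structural step is to exhibit the tower
\[
K(y)^h\>\subseteq\>K(x')^h\>\subseteq\>K(x)^h
\]
in which $K(x)^h|K(x')^h$ is purely inseparable of degree $p^e$ and $K(x')^h|K(y)^h$ is separable. The first assertion is general and does not involve $y$: the minimal polynomial of $x$ over $K(x')^h$ divides $(X-x)^{p^e}$, hence is $(X-x)^{p^j}$ with $x^{p^j}\in K(x')^h$; but $x^{p^j}$ is purely inseparable over $K(x')=K(x^{p^e})$ while also separable over $K(x')$ (as it lies in the henselization $K(x')^h$, which is separable over $K(x')$), so $x^{p^j}\in K(x')$, and this forces $j=e$. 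For the second, let $M$ be the separable closure of $K(y)^h$ in $K(x)^h$. Then $K(x)^h|M$ is purely inseparable of degree $[K(x)^h:K(y)^h]_i=p^e$, so $x'=x^{p^e}\in M$, whence $K(x')^h\subseteq M$ (an algebraic extension of the henselian field $K(y)^h$ is henselian). Comparing $[K(x)^h:K(x')^h]=p^e=[K(x)^h:M]$ gives $M=K(x')^h$; in particular $K(y)^h\subseteq K(x')^h$ and $K(x')^h|K(y)^h$ is separable.

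Now $(K(x')|K,v)$ is immediate as a subextension of the immediate extension $K(x)^h|K$, $\appr(x',K)$ is transcendental by Corollary~\ref{+gf} applied to $g(X)=X^{p^e}$ (whose degree is less than $\mbox{\bf d}=\infty$), and $y\in K(x')^h$ is transcendental over $K$ by the tower above. Hence Lemma~\ref{,tr}, applied to $x,x',y$ in the roles of $x,y,z$, shows that the standing hypotheses hold with $x'$ in place of $x$ (in particular $x'\notin K^c$) and that
\[
\bh_K(x:y)\>=\>\bh_K(x:x')\cdot\bh_K(x':y)\,.
\]
Since $x'$ is literally $g(x)$ for $g(X)=X^{p^e}$, Lemma~\ref{C4+} computes $\bh_K(x:x')=\bh_K(x:g)$ from the fixed values $\beta_i=vg_i(c)$ of the formal derivatives: because $p\mid\binom{p^e}{i}$ for $0<i<p^e$, the derivatives $g_i$ with $0<i<p^e$ are identically zero and contribute $\beta_i=\infty$, while $g_{p^e}=1$ gives $\beta_{p^e}=0$; thus the only admissible index is $\bh=p^e$, i.e.\ $\bh_K(x:x')=p^e$. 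Finally, Proposition~\ref{,6prop} with $x'$ in place of $x$ (admissible since $K(x')^h|K(y)^h$ is separable) yields $[K(x')^h:K(y)^h]\leq\bh_K(x':y)$, and therefore
\[
[K(x)^h:K(y)^h]\>=\>[K(x)^h:K(x')^h]\cdot[K(x')^h:K(y)^h]\>\leq\>p^e\cdot\bh_K(x':y)\>=\>\bh_K(x:y)\,.
\]

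I expect the main obstacle to be the structural step of the second paragraph --- above all the identification of the separable closure of $K(y)^h$ in $K(x)^h$ with $K(x^{p^e})^h$ --- together with the bookkeeping needed to check that Lemma~\ref{,tr} (and with it Lemma~\ref{Laty=atf}) genuinely applies to the triple $(x,x',y)$, so that $x'$ inherits all the standing hypotheses. Once these are settled, what remains is a short multiplicativity-of-degrees computation and the single binomial-coefficient fact $\bh_K(x:x^{p^e})=p^e$.
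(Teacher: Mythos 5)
Your proof is correct and follows essentially the same route as the paper's: factor $K(x)^h|K(y)^h$ through $K(x^{p^e})^h$ where $p^e$ is the inseparable degree, apply the multiplicativity from Lemma~\ref{,tr} together with Proposition~\ref{,6prop} to the separable part, and use $\bh_K(x:x^{p^e})=p^e$. The paper simply sets $p^n$ equal to the inseparable degree (allowing $n=0$, so the separable case needs no separate treatment), identifies $K(x^{p^n})^h$ with the maximal separable subextension via a one-line degree count, and leaves the evaluation $\bh_K(x:x^{p^n})=p^n$ implicit, but the decomposition and the two lemmas invoked are identical.
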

\begin{proof}
Take $p^n$ to be the inseparable degree of $K(x)^h|K(y)^h$ and $L|K(y)^h$
to be the maximal separable subextension of $K(x)^h|K(y)^h$. Then
$[K(x)^h:L]=p^n$. Further, $x^{p^n}$ is separable over $K(y)^h$, so
$x^{p^n}\in L$ and $K(x^{p^n})^h\subseteq L$. As
$K(x)^h=K(x^{p^n})^h(x)$, we find that
\[
p^n\>\geq\> [K(x)^h:K(x^{p^n})^h] \>=\> [K(x)^h:L]\cdot [L:K(x^{p^n})^h]
\>=\> p^n\cdot [L:K(x^{p^n})^h]\>,
\]
which shows that $[L:K(x^{p^n})^h]=1$ and in particular, $y\in
K(x^{p^n})^h$. So we are able to apply Lemma~\ref{,tr} to obtain that
$\bh_K(x:y)=\bh_K(x:x^{p^n})\cdot\bh_K(x^{p^n}:y)=p^n\cdot
\bh_K(x^{p^n}:y)$.

As $x^{p^n}$ is separable over $K(y)^h$, we can infer from
Proposition~\ref{,6prop} that $[K(x^{p^n})^h:K(y)^h]\leq\bh_K(x^{p^n}:y)$.
On the other hand, $[K(x)^h:K(x^{p^n})^h]=[K(x)^h:L]=p^n$. So we get
\[
[K(x)^h:K(y)^h]\>=\>p^n\cdot [K(x^{p^n})^h:K(y)^h]\>\leq\>
p^n\cdot \bh_K(x^{p^n}:y)\>=\>\bh_K(x:y)\>,
\]
as desired.
\end{proof}

\begin{corollary}                       \label{,11b}
Assume that (\ref{,sit1}) holds. Then
\[
K(x)^h = K(y)^h \>\Longleftrightarrow\> \bh_K(x:y)=1\;.
\]
\end{corollary}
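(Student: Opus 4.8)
The plan is to prove the two implications separately, using Theorem~\ref{,6} for the forward direction and the multiplicativity of the relative approximation degree (Lemma~\ref{,tr}) for the converse.

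First I would prove ``$\Rightarrow$''. Suppose $K(x)^h=K(y)^h$. Since we are in situation (\ref{,sit1}), both $x$ and $y$ are transcendental over $K$, and $x\in K(y)^h=K(x)^h$; moreover $\appr(x,K)$ is transcendental, while by Corollary~\ref{+gf} (applied to the polynomial $f$ with $v(y-f(x))\geq\dist(y,K)$) the approximation type $\appr(y,K)$ is transcendental as well. Hence, by Lemma~\ref{,tr}, situation (\ref{,sit1}) holds with the roles of $x$ and $y$ interchanged, and in particular $\bh_K(y:x)$ is defined. Now apply Theorem~\ref{,6} in both directions: $[K(x)^h:K(y)^h]\leq\bh_K(x:y)$ and $[K(y)^h:K(x)^h]\leq\bh_K(y:x)$. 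Since the two henselizations are equal, both degrees are $1$, so $\bh_K(x:y)\geq 1$ — which we already knew — but more importantly this alone does not pin down the value $1$. So instead I would use multiplicativity: by Lemma~\ref{,tr} with $z=x$ (legitimate since $x\in K(y)^h$ is transcendental over $K$ and situation (\ref{,sit1}) holds for $y$ in place of $x$), we get
\[
\bh_K(x:x)\>=\>\bh_K(x:y)\cdot\bh_K(y:x)\;.
\]
But $\bh_K(x:x)=1$: taking $f(X)=X$ we have $v(x-f(x))=\infty\geq\dist(x,K)$, and for the polynomial $f(X)=X$ one computes directly that $v(f(x)-f(c))=v(x-c)$ for $c\nearrow x$, so $\bh_K(x:f)=1$. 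Since $\bh_K(x:y)$ and $\bh_K(y:x)$ are positive integers whose product is $1$, each equals $1$; in particular $\bh_K(x:y)=1$.

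For ``$\Leftarrow$'', suppose $\bh_K(x:y)=1$. By Theorem~\ref{,6}, $[K(x)^h:K(y)^h]\leq\bh_K(x:y)=1$, so $K(x)^h=K(y)^h$ (the inclusion $K(y)^h\subseteq K(x)^h$ holds because $y\in K(x)^h$ by the hypothesis in (\ref{,sit1}), and henselization is functorial for algebraic extensions inside a fixed henselization).

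The main obstacle I anticipate is the forward direction: one must be careful that Lemma~\ref{,tr} genuinely applies with $x$ and $y$ swapped, which requires checking each clause of (\ref{,sit1}) for $y$ — that $y\notin K^c$ (this is part of the conclusion of Lemma~\ref{Laty=atf}), that $K(y)|K$ is immediate (it is a subextension of the immediate extension $K(x)^h|K$), that $\appr(y,K)$ is transcendental (Corollary~\ref{+gf}), and that the element we feed in, namely $x$, lies in $K(y)^h$ and is transcendental over $K$ (both given, the former by the hypothesis $K(x)^h=K(y)^h$). Once these verifications are in place the argument is purely formal. An alternative route for ``$\Rightarrow$'' that avoids invoking $\bh_K(x:x)=1$ would be to note that $[K(x)^h:K(y)^h]=1$ forces, via the proof of Theorem~\ref{,7}, the factor $G(Z)$ to have degree $\leq 1$ and hence $\bh=1$ directly; but the multiplicativity argument above is cleaner and self-contained.
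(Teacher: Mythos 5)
Your proof is correct and takes the same route as the paper: the forward direction via the multiplicativity $\bh_K(x:y)\cdot\bh_K(y:x)=\bh_K(x:x)=1$ from Lemma~\ref{,tr}, and the reverse direction via Theorem~\ref{,6}. The brief detour through applying Theorem~\ref{,6} in both directions was correctly abandoned, and your explicit verification that $\bh_K(x:x)=1$ (via $f(X)=X$) is a detail the paper leaves implicit but is accurate.
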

\begin{proof}
If $K(x)^h = K(y)^h$, then $x\in K(y)^h$ and $y\in K(x)^h$, and by
Lemma~\ref{,tr} we have that
\[
\bh_K(x:y)\cdot\bh_K(y:x)\>=\>\bh_K(x:x)\>=\>1\;,
\]
which yields $\bh_K(x:y)=1$. The reverse implication follows from
Theorem~\ref{,6}.
\end{proof}

%
%
\section{An application to henselian rationality}
In this section we will apply Theorem~\ref{,6} to immediate valued
function fields which are the henselization of a rational function
field.

\begin{theorem}                                    \label{hr-down}
Take a valued field $(K,v)$ of rank 1 and an immediate function field
$(F|K,v)$ of transcendence degree 1. Suppose there is some $x\in
F^h\setminus K^c$ with transcendental approximation type over $K$ such
that $F^h=K(x)^h$. Then there is already some $y\in F$ such that
$F^h=K(y)^h$. In fact, there is some $\gamma\in vK$ such that
$K(x)^h=K(y)^h$ holds for every $y\in F$ with $v(x-y)\geq\gamma$.
\end{theorem}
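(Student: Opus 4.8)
The plan is to deduce the theorem from Corollary~\ref{,11b}: under the standing hypothesis~(\ref{,sit1}), an element $y$ satisfies $K(y)^h=K(x)^h$ exactly when $\bh_K(x:y)=1$. So it suffices to produce a value $\gamma\in vK$ such that every $y\in F$ with $v(x-y)\geq\gamma$ first of all falls under hypothesis~(\ref{,sit1}) (in the role of ``$y$'' alongside our ``$x$''), and secondly has $\bh_K(x:y)=1$; the bare existence statement will then follow from density of $F$ in $F^h$.

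To set up, I would first record that~(\ref{,sit1}) holds for $x$: $(K,v)$ has rank~$1$ by assumption, $x\notin K^c$ by assumption, and since $\appr(x,K)$ is transcendental -- hence immediate and nontrivial -- Corollary~\ref{corKT2} shows that $x$ is transcendental over $K$ and that $(K(x)|K,v)$ is immediate. Because $x\notin K^c$ we have $\dist(x,K)<\infty$; and because $\appr(x,K)$ is immediate, $vK$ is nontrivial, hence cofinal in its divisible hull $\widetilde{vK}$, so I can choose $\gamma\in vK$ with $\gamma\geq\dist(x,K)$. Now take any $y\in F$ with $v(x-y)\geq\gamma$. Then $v(x-y)\geq\dist(x,K)$, so Lemma~\ref{v>L-imm} gives $\appr(y,K)=\appr(x,K)$; in particular this common approximation type is transcendental, $\dist(y,K)=\dist(x,K)<\infty$ so $y\notin K^c$, and by Corollary~\ref{corKT2} again, $y$ is transcendental over $K$ and $(K(y)|K,v)$ is immediate. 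Since $y\in F\subseteq F^h=K(x)^h$, hypothesis~(\ref{,sit1}) is satisfied for the pair $(x,y)$, and $\bh_K(x:y)$ is defined.

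The value $\bh_K(x:y)$ can now be computed with the trivial choice $f(X)=X$: its only nonvanishing formal derivative is $f_1\equiv 1$, so $\beta_1=0$, and Lemma~\ref{C4+} (applicable since $\deg f=1\leq\infty={\bf d}$) gives $v(f(x)-f(c))=v(x-c)=\beta_1+1\cdot v(x-c)$ for $c\nearrow x$; hence $\bh_K(x:f)=1$. Since $v(y-f(x))=v(x-y)\geq\gamma\geq\dist(x,K)=\dist(y,K)$, the polynomial $f$ is admissible in the definition of $\bh_K(x:y)$, so $\bh_K(x:y)=\bh_K(x:f)=1$ by Lemma~\ref{bhwd}. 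Corollary~\ref{,11b} then yields $K(y)^h=K(x)^h=F^h$, which is the ``in fact'' clause. Finally, since $(F|K,v)$ is immediate we have $vF=vK$, so $F$ is a valued field of rank~$1$ and is therefore dense in its henselization $F^h$; consequently there does exist some $y\in F$ with $v(x-y)\geq\gamma$, completing the proof.

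Given that Theorem~\ref{,6} and Corollary~\ref{,11b} are already at hand, the argument is short, and I expect no serious obstacle. The two points that need a little care are bookkeeping: checking that all clauses of~(\ref{,sit1}) -- transcendence of the approximation type, transcendence of the element, immediacy of $(K(y)|K,v)$, and $y\notin K^c$ -- propagate from $x$ to every sufficiently close $y$ (this is exactly where the immediacy of $\appr(x,K)$ and Lemma~\ref{v>L-imm} enter), and checking that a workable threshold $\gamma$ can be found inside $vK$ rather than only in $\widetilde{vK}$ (which relies on $vK$ being nontrivial and hence cofinal in its divisible hull).
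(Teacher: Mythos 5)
Your proof is correct and follows essentially the same route as the paper's: choose $\gamma\in vK$ above $\dist(x,K)$, use density of $F$ in $F^h$ (rank $1$) to find $y\in F$ within $\gamma$ of $x$, observe that $\bh_K(x:y)=1$ via the admissible choice $f(X)=X$, and conclude via Theorem~\ref{,6} (equivalently Corollary~\ref{,11b}, whose relevant direction is just Theorem~\ref{,6}). You spell out the verification that hypothesis~(\ref{,sit1}) propagates to $y$ in a bit more detail than the paper does, which is a harmless and arguably helpful elaboration rather than a different argument.
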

\begin{proof}
Since $x\notin K^c$ there is $\gamma \in vK$ such that $\gamma>
\dist(x,K)$. By assumption, the rank of $(K,v)$ is 1, and since
$(F|K,v)$ is immediate, also $(F,v)$ has rank 1. Thus, the element
$x$ lies in the completion of $F$. So we may take some $y\in F$
such that $v(x-y)\geq\gamma>\dist (x,K)$. For every such $y$,
$[K(x)^h:K(y)^h] \leq \bh_K(x:y)$ holds by Theorem~\ref{,6}, and
$\bh_K(x:y)=\bh_K(x:x)=1$ holds by Lemma~\ref{bhwd}. This yields
that $K(x)^h=K(y)^h$.
\end{proof}

%
%
\section{Approximation coefficients}
Throughout this section, we will assume the situation as described in
(\ref{,sit1}). As before, take $f(x)\in K[x]$ such that
$v(y-f(x))\geq\dist(y,K)$. An element $d\in K$ will be called an
\bfind{approximation coefficient of $y$ in $x$}\ (over $K$), if
\begin{equation}                 \label{,ac}
v(f(x) - f(c)) < v(f(x)-f(c)-d\cdot (x-c)^{\bhsc})
\end{equation}
for $c\nearrow x$, where $\bh=\bh_K(x:y)$.

\begin{lemma}                     
If $d$ satisfies (\ref{,ac}) for some $f(x)$ with
$v(y-f(x))\geq\dist(y,K)$,
then it satisfies (\ref{,ac}) for every such $f(x)$; in other words:
approximation coefficients are independent of the choice of $f(x)$. If
$d$ satisfies (\ref{,ac}), then it satisfies
\begin{equation}                 \label{,ac'}
v(y - f(c)) <v(y-f(c)-d\cdot (x-c)^{\bhsc})
\;\;\;\mbox{ for } c\nearrow x\;.
\end{equation}
\end{lemma}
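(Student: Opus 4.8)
The plan is to reduce both assertions to a single estimate: for $c\nearrow x$ the value $v(y-f(c))$ is already governed by the \emph{fixed} value $v(f(x)-f(c))=\beta+\bh\cdot v(x-c)$ of equation~(\ref{bhmin+}), where $\beta=\beta_K(x:y)$ and $\bh=\bh_K(x:y)$, because $v(y-f(x))$ is strictly larger than that value. First I would establish this estimate. Since $\appr(x,K)$ is transcendental we have $\mbox{\bf d}=\infty$, so Lemma~\ref{C4+} applies to every polynomial over $K$; in particular $v(f(x)-f(c))=\beta+\bh\cdot v(x-c)$ for $c\nearrow x$, and by Lemma~\ref{bhwd} the numbers $\bh=\bh_K(x:f)$ and $\beta=\beta_K(x:f)$ do not depend on the admissible choice of $f$. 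As $y$ is transcendental over $K$ and $K(y)|K$ is a subextension of the immediate extension $K(x)^h|K$, the approximation type $\appr(y,K)$ is immediate, so $v(y-f(x))\geq\dist(y,K)$ yields $\appr(y,K)=\appr(f(x),K)$ by Lemma~\ref{v>L-imm}, hence $\dist(y,K)=\dist(f(x),K)\geq\beta+\bh\cdot\dist(x,K)$ by~(\ref{LK1aeq}). Since the support of $\appr(x,K)$ has no maximal element, $v(x-c)<\dist(x,K)$ for $c\nearrow x$, and therefore
\[
v(y-f(x))\;\geq\;\dist(y,K)\;\geq\;\beta+\bh\cdot\dist(x,K)\;>\;\beta+\bh\cdot v(x-c)\;=\;v(f(x)-f(c))\qquad\text{for }c\nearrow x.
\]

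With this in hand, $(\ref{,ac'})$ follows at once. Writing $y-f(c)=(y-f(x))+(f(x)-f(c))$ and using $v(y-f(x))>v(f(x)-f(c))$ for $c\nearrow x$, the ultrametric inequality gives $v(y-f(c))=v(f(x)-f(c))$. Writing $y-f(c)-d(x-c)^{\bhsc}=(y-f(x))+\bigl(f(x)-f(c)-d(x-c)^{\bhsc}\bigr)$, and using $v(f(x)-f(c)-d(x-c)^{\bhsc})>v(f(x)-f(c))$, which is $(\ref{,ac})$, together with the estimate, gives $v(y-f(c)-d(x-c)^{\bhsc})>v(f(x)-f(c))=v(y-f(c))$; this is $(\ref{,ac'})$.

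For the independence statement, let $g\in K[X]$ be another polynomial with $v(y-g(x))\geq\dist(y,K)$. By Lemma~\ref{bhwd}, $\bh_K(x:g)=\bh$ and $\beta_K(x:g)=\beta$, so $v(g(x)-g(c))=\beta+\bh\cdot v(x-c)=v(f(x)-f(c))$ for $c\nearrow x$. From $f(x)-g(x)=(f(x)-y)+(y-g(x))$ and the two hypotheses we get $v(f(x)-g(x))\geq\dist(y,K)$, which by the estimate of the first paragraph exceeds $v(f(x)-f(c))$ for $c\nearrow x$; and since $\appr(x,K)$ is transcendental it fixes the value of $f-g$, so $v(f(c)-g(c))=v(f(x)-g(x))>v(f(x)-f(c))$ for $c\nearrow x$ by Corollary~\ref{+vgc}. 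Finally, the identity
\[
g(x)-g(c)-d(x-c)^{\bhsc}\;=\;\bigl(f(x)-f(c)-d(x-c)^{\bhsc}\bigr)-\bigl(f(x)-g(x)\bigr)+\bigl(f(c)-g(c)\bigr)
\]
exhibits the left-hand side as a sum of three terms, each of value strictly greater than $v(f(x)-f(c))=v(g(x)-g(c))$ (the first by $(\ref{,ac})$ for $f$, the other two by the preceding sentence), so $v\bigl(g(x)-g(c)-d(x-c)^{\bhsc}\bigr)>v(g(x)-g(c))$; that is, $d$ satisfies $(\ref{,ac})$ for $g$.

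The only point that needs genuine care is the estimate in the first paragraph, i.e.\ checking that $\dist(y,K)$ lies strictly above every value $v(f(x)-f(c))$, $c\nearrow x$; this is precisely where the immediacy of $\appr(x,K)$ (its support has no last element) and the identification $\dist(y,K)=\dist(f(x),K)$ are needed. Everything else is bookkeeping with the ultrametric inequality.
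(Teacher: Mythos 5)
Your proof is correct and follows essentially the same route as the paper's: both arguments hinge on showing that $v(y-f(x))$ and $v(f(x)-g(x))$ strictly exceed $v(f(x)-f(c))$ for $c\nearrow x$ (you make this explicit via $\dist(f(x),K)=\beta+\bh\cdot\dist(x,K)>\beta+\bh\cdot v(x-c)$, while the paper invokes $\dist(f(x),K)>v(f(x)-f(c))$ directly), and then apply the ultrametric triangle law to the same three-term decomposition of $g(x)-g(c)-d(x-c)^{\bhsc}$. The only cosmetic difference is that you prove~(\ref{,ac'}) first and independence second, whereas the paper does independence first and obtains~(\ref{,ac'}) by substituting $y$ for $g(x)$ in the same computation.
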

\begin{proof}
If $g(x)$ is another element of $K[x]$ with $v(y-g(x))\geq\dist(y,K)$,
then
\[
v(f(x)-g(x))\geq\dist(y,K)=\dist(f(x),K) > v(f(x)-f(c))
\]
for all $c\in K$. Since $\appr(x,K)$ is transcendental, it fixes the
value of the polynomial $f-g$, whence
\[
v(f(c)-g(c))=v(f(x)-g(x))>v(f(x)-f(c))
\;\;\;\mbox{ for } c\nearrow x\;.
\]
Hence by the ultrametric triangle law,
\begin{eqnarray*}
v(g(x)-g(c)) & = & \min\{v(g(x)-f(x)),v(f(x)-f(c)),v(f(c)-g(c))\}\\
 & = & v(f(x)-f(c))
\end{eqnarray*}
and
\begin{eqnarray*}
\lefteqn{v(g(x)-g(c)-d\cdot (x-c)^{\bhsc})}\\
&\geq& \min\{v(f(x)-f(c)-d\cdot (x-c)^{\bhsc})\,,\>
v(f(x)-g(x))\,,\>v(f(c)-g(c))\}\\
&>& v(f(x)-f(c))=v(g(x)-g(c))
\end{eqnarray*}
for $c\nearrow x\,$,
which shows that $d$ fulfills equation (\ref{,ac}) also with $g$ in
place of $f$. Replacing $g(x)$ by $y$ and $g(c)$ by $f(c)$ in
the above deduction, one obtains a proof of (\ref{,ac'}).
\end{proof}

The following lemma proves the existence of approximation coefficients:

\begin{lemma}                     \label{exapco}
The element $d\in K$ is an approximation coefficient of $y$ in $x$ if
and only if
\[
vd = v f_{\bhsc}(c) < v(f_{\bhsc}(c) - d) \mbox{\ \ for\ \ }c\nearrow x\;.
\]
In particular, there exists an approximation coefficient of $y$ in $x$.
Furthermore,
\begin{equation}                                     \label{,dist_d}
\dist(y,K)=vd+\bh\cdot \dist(x,K)
\end{equation}
\end{lemma}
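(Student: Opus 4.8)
The plan is to isolate, in the Taylor expansion (\ref{Taylorexp}) of $f(X)-f(c)$ at $c$, the single term coming from the index $i=\bh$, where $\bh=\bh_K(x:y)=\bh_K(x:f)$. Write $\beta_{\bhsc}$ for the fixed value $vf_{\bhsc}(c)$, $c\nearrow x$; this is finite because $f$ is nonconstant (as $y\notin K^c$ by Lemma~\ref{Laty=atf}), and $\beta_{\bhsc}=\beta_K(x:f)=\beta_K(x:y)$. By Lemma~\ref{C4+}, equations (\ref{xf-}) and (\ref{bhmin+}), we have for $c\nearrow x$ that $v(f(x)-f(c))=\beta_{\bhsc}+\bh\cdot v(x-c)$, while $v(f_i(c)(x-c)^i)>\beta_{\bhsc}+\bh\cdot v(x-c)$ for all $i$ with $1\le i\le\deg f$ and $i\neq\bh$. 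Then I would write
\[
f(x)-f(c)-d\cdot(x-c)^{\bhsc}\>=\>\sum_{i\neq\bhsc}f_i(c)(x-c)^i\>+\>(f_{\bhsc}(c)-d)(x-c)^{\bhsc}\;,
\]
where, by the above, the sum over $i\neq\bh$ has value $>\beta_{\bhsc}+\bh\cdot v(x-c)$ for $c\nearrow x$. Consequently condition (\ref{,ac}) --- that the left-hand side have value strictly larger than $v(f(x)-f(c))=\beta_{\bhsc}+\bh\cdot v(x-c)$ for $c\nearrow x$ --- holds exactly when $v((f_{\bhsc}(c)-d)(x-c)^{\bhsc})>\beta_{\bhsc}+\bh\cdot v(x-c)$, i.e.\ when $v(f_{\bhsc}(c)-d)>\beta_{\bhsc}=vf_{\bhsc}(c)$, for $c\nearrow x$; and by the ultrametric triangle law this is equivalent to $vd=vf_{\bhsc}(c)<v(f_{\bhsc}(c)-d)$ for $c\nearrow x$. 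This yields the stated characterization of approximation coefficients.

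Next I would prove existence. Since $\appr(x,K)$ is transcendental, $\mbox{\bf d}=\infty$, so $\deg(f_{\bhsc}-d)\le\deg f_{\bhsc}<\mbox{\bf d}$ for every $d\in K$; hence by Lemma~\ref{CK1} the element $f_{\bhsc}(x)\in K(x)$ satisfies $vf_{\bhsc}(x)=vf_{\bhsc}(c)=\beta_{\bhsc}$ for $c\nearrow x$. As $(K(x)|K,v)$ is immediate by hypothesis (\ref{,sit1}), there is $d\in K$ with $v(f_{\bhsc}(x)-d)>vf_{\bhsc}(x)=\beta_{\bhsc}$, which forces $vd=\beta_{\bhsc}$; applying Lemma~\ref{CK1} once more, now to the polynomial $f_{\bhsc}-d$, gives $v(f_{\bhsc}(c)-d)=v(f_{\bhsc}(x)-d)>\beta_{\bhsc}=vf_{\bhsc}(c)$ for $c\nearrow x$. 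By the criterion established in the first paragraph, this $d$ is an approximation coefficient of $y$ in $x$.

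Finally, every approximation coefficient $d$ has $vd=\beta_{\bhsc}=\beta_K(x:y)$, so the distance formula amounts to showing $\dist(y,K)=\beta_{\bhsc}+\bh\cdot\dist(x,K)$. Since $\deg f<\mbox{\bf d}$, Lemma~\ref{LK1b} gives $\dist(f(x),K)=\beta_{\bhsc}+\bh\cdot\dist(x,K)$; and since $\appr(y,K)$ is immediate ($y\notin K$ lies in $K(x)^h$, which is immediate over $K$) while $v(y-f(x))\ge\dist(y,K)$, Lemma~\ref{v>L-imm} gives $\appr(y,K)=\appr(f(x),K)$ and hence $\dist(y,K)=\dist(f(x),K)$; combining these two equalities gives the claim. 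I do not expect a serious obstacle; the point needing care is the equivalence of the first paragraph, where in the ``only if'' direction one recovers $v(f_{\bhsc}(c)-d)>\beta_{\bhsc}$ by expressing $(f_{\bhsc}(c)-d)(x-c)^{\bhsc}$ as the difference of $f(x)-f(c)-d(x-c)^{\bhsc}$ and $\sum_{i\neq\bhsc}f_i(c)(x-c)^i$, both of which have value exceeding $v(f(x)-f(c))$ for $c\nearrow x$, and where throughout one must keep track of the ``$c\nearrow x$'' quantifier (a finite conjunction of properties valid for $c\nearrow x$ is again valid for $c\nearrow x$).
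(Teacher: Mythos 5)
Your proof is correct and follows essentially the same route as the paper: isolate the $i=\bh$ term of the Taylor expansion, observe that the remaining terms have value strictly exceeding $v(f(x)-f(c))$ for $c\nearrow x$, and reduce condition~(\ref{,ac}) to $vf_{\bh}(c)<v(f_{\bh}(c)-d)$, with existence coming from immediateness of $(K(x)|K,v)$ together with transcendentality of $\appr(x,K)$, and the distance formula from Lemma~\ref{LK1b}. Your explicit appeal to Lemma~\ref{v>L-imm} to justify $\dist(y,K)=\dist(f(x),K)$ is a welcome small amplification of a step the paper leaves implicit.
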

\begin{proof}
By definition of $\bh=\bh_K(x:y)=\bh_K(x:f)$, we have that
\[
v(f(x)-f(c) - f_{\bhsc}(c)(x-c)^{\bhsc}) > v(f(x)-f(c))
= v(f_{\bhsc}(c)(x-c)^{\bhsc})
\]
for $c\nearrow x$. Hence (\ref{,ac}) holds
for $c\nearrow x$ if and only if
\[
v(f_{\bhsc}(c)(x-c)^{\bhsc}-d\cdot (x-c)^{\bhsc}) >
v(f_{\bhsc}(c)(x-c)^{\bhsc})\>,
\]
which is equivalent to
\[
v f_{\bhsc}(c)  < v(f_{\bhsc}(c)-d)\mbox{\ \ for\ \ }c\nearrow x\;.
\]
Since $K(x)|K$ is assumed to be an immediate extension, by
Lemma~\ref{imme} a) there exists some $d\in K$ such that
$v(f_{\bhsc}(x)-d)>v f_{\bhsc}(x)$. Since $\appr(x,K)$ is
transcendental, for $c\nearrow x$ we have that
$v(f_{\bhsc}(c)-d)= v(f_{\bhsc}(x)-d)$ and $v f_{\bhsc}(c)=
vf_{\bhsc}(x)$ and thus,
\[
v(f_{\bhsc}(c)-d)=v(f_{\bhsc}(x)-d)>v f_{\bhsc}(x)=v f_{\bhsc}(c)=vd\;.
\]
Hence $d$ is an approximation coefficient for $y$ in $x$ by the first part
of our proof.

In view of the hypothesis that $ \appr(x,K) $ is transcendental, $f(x)$
satisfies equation~(\ref{distEq}) of Lemma~\ref{LK1b}. From this we
obtain:
\begin{eqnarray*}
\dist(y,K)&=&\dist(f(x),K)=vf_{\bh}(c) + \bh\cdot\dist(x,K)\\
&=& vd + \bh\cdot\dist(x,K)\;.
\end{eqnarray*}
\end{proof}

\begin{lemma}                     \label{,acm}
Take elements $y_i\in K[x]^c\setminus K^c$ with common approximation
degree $\bh=\bh_K(x:y_i)$, $1\leq i\leq m$. Assume that $d_i\in K$ is an
approximation coefficient of $y_i$ in $x$ and let $k_i$ be elements in
$K$ such that
\begin{equation}                              \label{,ack}
v\sum_{i=1}^m k_i d_i \,= \min_{1\leq i \leq m} vk_i d_i\> <\infty\;.
\end{equation}
Then the following will hold:
\[
\bh_K\left(x:\sum_{i=1}^m k_i y_i\right) = \bh\;.
\]
\end{lemma}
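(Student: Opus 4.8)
The plan is to produce, from witnessing polynomials for the individual $\bh_K(x:y_i)$, an explicit witnessing polynomial for $\bh_K\bigl(x:\sum_i k_iy_i\bigr)$, and to extract its relative approximation degree from the non-cancellation hypothesis (\ref{,ack}). First I would, for each $i$, fix $f_i\in K[X]$ with $v(y_i-f_i(x))\geq\dist(y_i,K)$, so that by definition $\bh_K(x:f_i)=\bh$; since $\appr(x,K)$ is transcendental we have $\deg f_i<{\bf d}=\infty$, hence Lemma~\ref{LK1b} applies and yields that $\appr(f_i(x),K)$ is immediate with $\dist(f_i(x),K)=\beta_K(x:f_i)+\bh\cdot\dist(x,K)$, while Lemma~\ref{exapco} gives $\dist(y_i,K)=vd_i+\bh\cdot\dist(x,K)$. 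The defining property (\ref{,ac}) of the approximation coefficient, $v\bigl((f_i(x)-f_i(c))-d_i(x-c)^{\bh}\bigr)>v(f_i(x)-f_i(c))$ for $c\nearrow x$, moreover forces $v(f_i(x)-f_i(c))=vd_i+\bh\cdot v(x-c)$ for $c\nearrow x$. I then set $y:=\sum_i k_iy_i$ and take $f:=\sum_i k_if_i\in K[X]$ as the candidate.

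The heart of the argument is to compute $v(f(x)-f(c))$ for $c\nearrow x$. I would write $f(x)-f(c)-\bigl(\sum_i k_id_i\bigr)(x-c)^{\bh}=\sum_i k_i\bigl((f_i(x)-f_i(c))-d_i(x-c)^{\bh}\bigr)$; by (\ref{,ac}) each summand on the right has value strictly larger than $vk_id_i+\bh\cdot v(x-c)$, hence, using (\ref{,ack}), strictly larger than $v\bigl(\sum_j k_jd_j\bigr)+\bh\cdot v(x-c)$, so the whole right-hand side has value exceeding this bound. On the other hand $\bigl(\sum_i k_id_i\bigr)(x-c)^{\bh}$ has value exactly $v\bigl(\sum_i k_id_i\bigr)+\bh\cdot v(x-c)$, the point being that (\ref{,ack}) guarantees $\sum_ik_id_i\neq 0$ and no cancellation among the leading terms. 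Comparing, $v(f(x)-f(c))=v\bigl(\sum_ik_id_i\bigr)+\bh\cdot v(x-c)$ for $c\nearrow x$, so $\bh_K(x:f)=\bh$ and $\beta_K(x:f)=v\bigl(\sum_ik_id_i\bigr)$.

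It remains to verify that $f$ is admissible for computing $\bh_K(x:y)$, i.e. $v(y-f(x))\geq\dist(y,K)$. Since $y-f(x)=\sum_i k_i(y_i-f_i(x))$ and $v(y_i-f_i(x))\geq\dist(y_i,K)=vd_i+\bh\cdot\dist(x,K)$, the ultrametric inequality together with (\ref{,ack}) and Lemma~\ref{LK1b} gives $v(y-f(x))\geq\min_i(vk_i+vd_i)+\bh\cdot\dist(x,K)=v\bigl(\sum_ik_id_i\bigr)+\bh\cdot\dist(x,K)=\beta_K(x:f)+\bh\cdot\dist(x,K)=\dist(f(x),K)$. As $\appr(f(x),K)$ is immediate, Lemma~\ref{v>L-imm} then forces $\appr(y,K)=\appr(f(x),K)$, whence $\dist(y,K)=\dist(f(x),K)$ and in particular $v(y-f(x))\geq\dist(y,K)$; this also identifies $\appr(y,K)$ as transcendental (Corollary~\ref{+gf} and Corollary~\ref{corKT2}), so that $\bh_K(x:y)$ is defined, and we conclude $\bh_K(x:y)=\bh_K(x:f)=\bh$.

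I expect the substantive step to be the computation of $v(f(x)-f(c))$, and in particular recognising that hypothesis (\ref{,ack}) is exactly what rules out cancellation of the leading coefficients $k_id_i$ when the $y_i$ are combined; everything else is bookkeeping. The routine care needed is to keep track of which quantities are genuine values in $vK$ and which are cuts in $\widetilde{vK}$ — the minima over $i$ are governed purely by $\min_i(vk_i+vd_i)=v\bigl(\sum_ik_id_i\bigr)$ because the summand $\bh\cdot\dist(x,K)$ is common to all indices — and to note that Lemmas~\ref{LK1b} and~\ref{exapco}, though stated under (\ref{,sit1}) where $y\in K(x)^h$, apply verbatim to the elements $y_i\in K[x]^c\setminus K^c$ occurring here.
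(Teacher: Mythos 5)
Your proof is correct and takes essentially the same route as the paper: build the witness $f=\sum_i k_i f_i$, use the non-cancellation hypothesis (\ref{,ack}) to pin down $\bh_K(x:f)=\bh$ and $\beta_K(x:f)=v\sum_i k_i d_i$, and then transfer to $\sum_i k_i y_i$ via the distance estimate and Lemma~\ref{v>Lambda}. The only packaging difference is that you extract $\bh_K(x:f)$ by a single Taylor-remainder estimate using the approximation-coefficient characterization (\ref{,ac}) directly, whereas the paper compares the derivative values $vg_j(c)$ for $j\ne\bh$ via Lemma~\ref{C4+} --- these encode the same non-cancellation and are interchangeable.
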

\begin{proof}
We choose polynomials $f^{[i]}(X)\in K[X]$ with $v(y_i-f^{[i]}(x)) \geq
\dist(y_i,K)$. Then by Lemma~\ref{v>Lambda} b), we have that
$\dist(f^{[i]}(x),K) = \dist(y_i,K)$. We set
\[
g(X):=\sum_{i=1}^m k_i f^{[i]}(X) \in K[X]
\]
and show that $\bh_K(x:g)=\bh$.

First, we observe that by the previous lemma together with (\ref{,ack}),
\begin{eqnarray*}
vg_{\bh}(c) & = & v\sum_{i=1}^m k_i f^{[i]}_{\bhsc}(c) \>=\>
\min\left\{v\sum_{i=1}^m k_id_i\,,\,v\left({\sum_{i=1}^m
(k_i f_h^{[i]}(c)-k_i d_i)}\right)\right\} \\
 & = & v\sum_{i=1}^m k_i d_i \>=\>\min_{1\leq i \leq m} vk_i d_i
\>=\>\min_{1\leq i \leq m} v\,k_i f^{[i]}_{\bhsc}(c)
\end{eqnarray*}
for $c\nearrow x$ (in particular, $vg_{\bh}(c)<\infty$ which implies
that $g$ is nonconstant); with $1\leq j\not=\bh$ we obtain:
\begin{eqnarray*}
v\,g_{\bhsc}(c)(x-c)^{\bhsc}
&=& vg_{\bhsc}(c)+\bh\cdot v(x-c)\>=\>
\left(\min_{1\leq i \leq m} v\,k_i f^{[i]}_{\bhsc}(c)\right)
+\bh\cdot v(x-c)\\
&=& \min_{1\leq i \leq m} v\,k_i f^{[i]}_{\bhsc}(c)(x-c)^{\bhsc}\\
&<& \min_{1\leq i \leq m} v\,k_i f^{[i]}_{j}(c)(x-c)^{j}\\
&\leq& v\sum_{i=1}^m k_i f^{[i]}_{j}(c)(x-c)^j
\>=\> v\,g_{j}(c)(x-c)^{j}\;.
\end{eqnarray*}
This proves that $\bh_K(x:g)=\bh$. It also follows that
\begin{eqnarray*}
\dist(g(x),K) &=& vg_{\bh}(c) + \bh\cdot\dist(x,K)
\>=\>\left(\min_{1\leq i \leq m} vk_i f^{[i]}_{\bh}(c)\right) +
\bh\cdot\dist(x,K)\\
&=& \min_{1\leq i \leq m} v\,k_i f^{[i]}_{\bh}(c)
+ \bh\cdot\dist(x,K)\\
&=& \min_{1\leq i \leq m} v k_i + vf^{[i]}_{\bh}(c)
+ \bh\cdot\dist(x,K)\\
&=& \min_{1\leq i \leq m} v k_i + \dist(f^{[i]}(x),K)
\>=\> \min_{1\leq i \leq m} v k_i + \dist(y_i,K)\\
&\leq& \min_{1\leq i \leq m} v k_i + v(y_i-f^{[i]}(x))
\leq \min_{1\leq i \leq m} v(k_i y_i- k_i f^{[i]}(x))\\
&\leq& v\sum_{i=1}^m (k_i y_i- k_i f^{[i]}(x))
= v\left(\sum_{i=1}^m k_i y_i - g(x)\right)\>,
\end{eqnarray*}
where the first equality follows from Lemma~\ref{LK1b} as $\appr(x,K)$
is transcendental. By Lemma~\ref{v>Lambda} b), this shows that
\[
\dist\left(\sum_{i=1}^m k_i y_i,K\right) \>=\> \dist(g(x),K)
\leq v\left(\sum_{i=1}^m k_i y_i - g(x)\right)\;.
\]
Consequently,
\[
\bh_K(x:\sum_{i=1}^m k_i y_i)=\bh_K(x:g)=\bh\;.
\]
\end{proof}

%
%
\section{Valuation independence of Galois groups}  \label{sectvigp}
In this section, we will introduce a valuation theoretical property that
characterizes the Galois groups of tame Galois extensions. Take a Galois
extension $(L|K,v)$ of henselian fields.
Its Galois group $\Gal L|K$ will be called
\bfind{valuation independent} if for every choice of elements
$d_1,\ldots,d_n\in \tilde{L}$ and automorphisms $\sigma_1, \ldots,
\sigma_n\in \Gal L|K$ there exists an element $d\in L$ such that (for
the unique extension of the valuation $v$ from $L$ to $\tilde{L}$):

\begin{equation}                   \label{,viG}
v\sum_{i=1}^n \sigma_i(d)\,d_i
\>=\> \min_{1\leq i\leq n} v\,\sigma_i(d)\,d_i\;.
\end{equation}

Since $(K,v)$ is assumed to be henselian, we have that $v\sigma (d)=vd$
for all $\sigma\in \Gal L|K$ and therefore, $v\sigma_i(d)\,d_i=vd+vd_i$.
Suppose that $vd_{i_0}=\min_{i} vd_i\,$; then (\ref{,viG}) will hold if
and only if
\[
v\sum_{i=1}^n \frac{\sigma_i(d)}{d}\,\frac{d_i}{d_{i_0}}\>=\>0\;.
\]
In this sum, the terms with $v(d_i/d_{i_0})>0$ have no influence, and we
can delete the corresponding $\sigma_i$ from the list. So we see:

\begin{lemma}
Assume that $(L|K,v)$ is a Galois extension of henselian fields. Then
$\Gal L|K$ is valuation independent if and only if for every choice of
elements $d_i\in \tilde{L}$ with $vd_i=0$ for $1\leq i\leq n$, and
automorphisms $\sigma_1, \ldots, \sigma_n\in \Gal L|K$, there exists an
element $d\in L$ such that
\begin{equation}                            \label{viggh}
v\sum_{i=1}^n \frac{\sigma_i(d)}{d}\,d_i\>=\>0\;.
\end{equation}
\end{lemma}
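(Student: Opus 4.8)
The plan is to lean on the normalization already carried out in the two paragraphs preceding the statement, after which nothing is left but elementary bookkeeping with the ultrametric inequality and the identity $v\sigma(d)=vd$ (valid for $\sigma\in\Gal L|K$ because $(K,v)$ is henselian). So I would only have to connect the condition in (\ref{viggh}) to the condition (\ref{,viG}) in both directions.

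For the implication ``$\Gal L|K$ valuation independent $\Rightarrow$ (\ref{viggh})'', I would simply invoke the definition. Given $d_1,\dots,d_n\in\tilde L$ with $vd_i=0$ and $\sigma_1,\dots,\sigma_n\in\Gal L|K$, valuation independence supplies $d\in L$, $d\neq 0$, with $v\sum_i\sigma_i(d)d_i=\min_i v\sigma_i(d)d_i$. Henselianity gives $v\sigma_i(d)d_i=vd+vd_i=vd$ for every $i$, so the right-hand side is $vd$; dividing the equality by $d$ yields $v\sum_i(\sigma_i(d)/d)\,d_i=0$, which is (\ref{viggh}).

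For the converse I would argue as follows. Take arbitrary nonzero $d_1,\dots,d_n\in\tilde L$ and $\sigma_1,\dots,\sigma_n\in\Gal L|K$, pick $i_0$ with $vd_{i_0}=\min_i vd_i$, and put $e_i:=d_i/d_{i_0}$, so that $ve_i\geq 0$ for all $i$ and $ve_{i_0}=0$. Let $I=\{\,i:ve_i=0\,\}$; this set is nonempty since $i_0\in I$. Applying (\ref{viggh}) to the value-zero family $(e_i)_{i\in I}$ together with the automorphisms $(\sigma_i)_{i\in I}$ produces $d\in L$ with $v\sum_{i\in I}(\sigma_i(d)/d)\,e_i=0$. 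For $i\notin I$ one has $v\bigl((\sigma_i(d)/d)\,e_i\bigr)=ve_i>0$, so by the ultrametric triangle law $v\sum_{i=1}^n(\sigma_i(d)/d)\,e_i=0$ as well; multiplying through by $d\,d_{i_0}$ and using $v\sigma_i(d)d_i=vd+vd_i$ this says precisely that $v\sum_i\sigma_i(d)d_i=vd+vd_{i_0}=\min_i v\sigma_i(d)d_i$, i.e.\ (\ref{,viG}) holds for this family. Since the family was arbitrary, $\Gal L|K$ is valuation independent.

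I do not anticipate a real obstacle. The only point needing a little care is the step from the partial sum over $I$ to the full sum over $\{1,\dots,n\}$: one must check that the discarded indices are exactly those whose terms have strictly positive value, so that the value of the sum is unchanged. One should also record explicitly that the element $d$ furnished by valuation independence, and the elements $d_i$, may be assumed nonzero, so that the divisions by $d$ and by $d_{i_0}$ are legitimate.
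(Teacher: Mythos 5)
Your proof is correct and takes essentially the same route the paper does: the paper's own argument is precisely the normalization sketched in the two paragraphs preceding the lemma (divide by the $d_{i_0}$ of minimal value, use $v\sigma(d)=vd$ from henselianity, and observe that terms of strictly positive value may be deleted without affecting the condition). You have merely written out both directions of the equivalence explicitly and recorded the harmless nonvanishing conventions for $d$ and the $d_i$, which the paper leaves tacit.
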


\begin{theorem}                             \label{+Gtvi}
A Galois extension of henselian fields is tame if
and only if its Galois group is valuation independent.
\end{theorem}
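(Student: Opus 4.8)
The plan is to deduce both implications from the standard description of the ramification group. Recall (see e.g.\ \cite{End}, \cite{FVK}) that for a Galois extension $(L|K,v)$ of henselian fields the ramification group $V$ consists exactly of those $\rho\in\Gal L|K$ with $v(\rho(z)-z)>vz$ for every $z\in L^\times$, equivalently with $\overline{\rho(z)/z}=1$ for every $z\in L^\times$ (note that $v\rho(z)=vz$, since $(K,v)$ being henselian forces the extension of $v$ to $\widetilde K$, hence every $\sigma\in\Gal L|K$, to be valuation preserving). By the excerpt, $(L|K,v)$ is tame if and only if $V=\{1\}$. By the preceding Lemma I may work with the reformulated notion of valuation independence, and I may clearly assume $\sigma_1,\dots,\sigma_n$ pairwise distinct (terms with equal $\sigma_i$ can be merged).

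For the implication ``valuation independent $\Rightarrow$ tame'' I would argue contrapositively: if $(L|K,v)$ is not tame, I pick $\rho\in V$ with $\rho\neq\mathrm{id}$ and test the choice $n=2$, $\sigma_1=\mathrm{id}$, $\sigma_2=\rho$, $d_1=1$, $d_2=-1$ (so $vd_1=vd_2=0$). For every $d\in L^\times$,
\[
v\Bigl(\frac{\sigma_1(d)}{d}\,d_1+\frac{\sigma_2(d)}{d}\,d_2\Bigr)
\>=\>v\Bigl(1-\frac{\rho(d)}{d}\Bigr)\>=\>v\bigl(d-\rho(d)\bigr)-vd\>>\>0\;,
\]
while both terms of the sum have value $0$; hence no $d$ realizes the required equality, so $\Gal L|K$ is not valuation independent.

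For ``tame $\Rightarrow$ valuation independent'' I would fix distinct $\sigma_1,\dots,\sigma_n\in\Gal L|K$ and $d_1,\dots,d_n\in\widetilde L$ with $vd_i=0$, set $c_i:=\overline{d_i}\in\widetilde{Lv}^\times$, and define $\Phi_i\colon L^\times\to\widetilde{Lv}^\times$ by $\Phi_i(d)=\overline{\sigma_i(d)/d}$; this is well defined since $v(\sigma_i(d)/d)=0$, and $\Phi_i$ is a group homomorphism. The key point is that the $\Phi_i$ are pairwise distinct: if $\Phi_i=\Phi_j$ on $L^\times$, then $v(\sigma_i(d)-\sigma_j(d))>vd$ for all $d\in L^\times$, and applying the valuation preserving $\sigma_j^{-1}$ this reads $v(\rho(d)-d)>vd$ for all $d$, i.e.\ $\rho:=\sigma_j^{-1}\sigma_i\in V=\{1\}$, so $\sigma_i=\sigma_j$. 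Thus $\Phi_1,\dots,\Phi_n$ are pairwise distinct characters of the group $L^\times$ with values in the field $\widetilde{Lv}$, hence $\widetilde{Lv}$-linearly independent by Dedekind's theorem on the linear independence of characters. Since all $c_i\neq0$, the function $\sum_i c_i\Phi_i$ is not identically zero, so there is $d\in L^\times$ with $\sum_i c_i\,\overline{\sigma_i(d)/d}\neq0$ in $\widetilde{Lv}$. As each summand $\frac{\sigma_i(d)}{d}d_i$ has value $0$ (so the residue of the sum equals $\sum_i c_i\,\overline{\sigma_i(d)/d}$), this yields $v\sum_i\frac{\sigma_i(d)}{d}d_i=0$, as required.

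The only genuine external input is the quoted description of $V$; granting it, both directions are short, and the reduction to distinct $\sigma_i$, the passage to the reformulated definition, and the invocation of Dedekind independence are routine. The step I expect to be the real crux — and where one must do actual work if the description of $V$ is not taken for granted — is the equivalence ``$\Phi_i=\Phi_j\iff\sigma_j^{-1}\sigma_i\in V$'': proving it from scratch is essentially the structure theory of tame extensions (the inertia field, the perfect pairing $G_0/V\hookrightarrow\mathrm{Hom}(vL/vK,(Lv)^\times)$, and the fact that wild automorphisms fix the residue $\overline{\rho(z)/z}$ for every $z$).
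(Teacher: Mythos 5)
Your proof is correct and follows essentially the same route as the paper: both introduce the characters $\chi_\sigma(d)=\overline{\sigma(d)/d}$ on $L^\times$, identify those $\sigma$ with $\chi_\sigma=1$ as the ramification group, invoke Artin/Dedekind linear independence of characters to get the "tame $\Rightarrow$ valuation independent'' direction, and test $n=2$, $d_1=-d_2=1$ for the converse. The only presentational difference is that the paper packages the computation "$\chi_{\sigma_i}=\chi_{\sigma_j}\iff\sigma_j^{-1}\sigma_i$ is in the ramification group'' as the statement that $\sigma\mapsto\chi_\sigma$ is a crossed homomorphism with kernel $G^r(L|K,v)$, whereas you verify that equivalence directly.
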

\begin{proof}
Take a Galois extension $(L|K,v)$ of henselian fields, elements $d_i \in
\tilde{L}$ with $vd_i=0$ for $1\leq i\leq n$, and automorphisms
$\sigma_1, \ldots, \sigma_n\in \Gal L|K$.
For $\sigma\in \Gal L|K$ and $d\in L^{\times}$, we set
\[
\chi_\sigma(d)\>:=\>\frac{\sigma(d)}{d}\,v\>.
\]
Since $v\sigma(d)=vd$, the right hand side is a nonzero element in
$Lv$. Now equation (\ref{viggh}) is equivalent to
\begin{equation}                            \label{vichar}
\sum_{i=1}^nd_i v\cdot \chi_{\sigma_i}(d)\>\ne\> 0\,;
\end{equation}
note that $d_i v\ne 0$ since $vd_i=0$.

\pars
We extend the homomorphism
\[
G^i(L|K,v)\ni\sigma\;\mapsto\;\chi_\sigma\in\mbox{\rm Hom}(L^\times,
(Lv)^\times)\;,
\]
which is well known from ramification theory (see \cite{[Eng--P]},
Lemma~5.2.6), to a crossed homomorphism from $\Gal L|K$ to $\mbox{\rm
Hom} (L^\times, (Lv)^\times)$. For the definition and an application
of crossed homomorphisms, see \cite[\S6]{Kadd}. As in the case of
$\sigma\in G^i(L|K,v)$, it is shown that $\chi_\sigma\in \mbox{\rm
Hom}(L^\times, \ovl{L}^\times)$. This group is a right $\Gal L|K$-module
under the scalar multiplication
\[
\chi^\rho\>:=\> \chi\circ\rho\>.
\]
We compute:
\[
\chi_{\sigma\tau}(d)=\frac{\sigma\tau(d)}{d}\,v=
\frac{\sigma\tau(d)}{\tau (d)}\,v\cdot \frac{\tau(d)}{d}\,v=
(\chi_{\sigma}\circ\tau)(d)\cdot \chi_{\tau}(d)\;.
\]
Thus,
\[
\chi_{\sigma\tau}\>=\>\chi_\sigma^\tau\cdot\chi_\tau\;.
\]
In other words, the map
\begin{equation}                            \label{chGZ}
\Gal L|K\ni\sigma\>\mapsto\>\chi_\sigma\in
\mbox{\rm Hom}(L^\times,(Lv)^\times)
\end{equation}
is a crossed homomorphism. Hence, it is injective if and only if its
kernel is trivial. This kernel consists of all $\sigma\in\Gal L|K$
for which $\frac{\sigma(d)}{d}\,v=1$ for all $d\in L^\times$. So
the kernel is the ramification group $G^r(L|K,v)$.

The theorem of Artin\index{Artin} on linear independence of characters
(see \cite{LANG3}, VI, \S4, Theorem~4.1) tells us that if the
$\chi_{\sigma_i}$ are distinct characters, then an element $d$
satisfying (\ref{vichar}) will exist. This shows that $G$ is valuation
independent if the map in (\ref{chGZ}) is injective. The converse is
also true: if $\sigma_1\ne\sigma_2$ but $\chi_{\sigma_1}
=\chi_{\sigma_2}$, then with $n=2$ and $d_1=-d_2=1$, (\ref{viggh}) does
not hold for any $d$.

Since the kernel is the ramification group of $(L|K,v)$, we conclude
that $\Gal L|K$ is valuation independent if and only if the ramification
group is trivial. This is equivalent to $(L|K,v)$ being a tame extension.
\end{proof}

Note that we
could give the above definition and the result of the theorem also for
extensions which are not Galois, replacing automorphisms by embeddings;
however, the normal hull of an algebraic extension $L|K$ of a henselian
field $K$ is a tame extension of $K$ if and only if $L|K$ is a tame
extension, so there is no loss of generality in restricting our scope to
Galois extensions.

%
%
\section{A pull down principle for henselian rationality through tame
extensions}     \label{+8.2}
Take a tame extension $(L|K,v)$ of fields of rank 1 and an immediate
function field $(F|K,v)$ of transcendence degree 1 with $F$ not
contained in the completion $K^c$ of $K$. By Lemma~\ref{idllind}, the
extension $(F^h.L|L,v)$ is again immediate. Since $L|K$ is algebraic,
so is $F^h.L|F^h$ and therefore, $F^h.L$ is henselian, so
$F^h.L=(F.L)^h$. We consider the following question:
\sn
{\it If $F^h.L|L$ is a henselian rational function field,
does this imply the same for $F^h|K$?}

\pars
To start with, we observe that w.l.o.g.\ we may assume the extension
$L|K$ to be finite and Galois. Indeed, if $x\in F^h.L$ such that $F^h.L
=L(x)^h$, then $x$ lies already in $F^h.L_1$ for some finite
subextension $L_1|K$ of $L|K$. Since $x$ must be transcendental over
$L_1\,$, the extension $F^h.L_1|L_1(x)^h$ is finite, generated by
finitely many elements that lie in $L(x)^h$. So we can choose a finite
subextension $L_2|L_1$ of $L|L_1$ such that these elements already lie
in $L_2(x)^h$. Since the normal hull of a tame extension is a tame
extension as well, we may replace $L_2$ by its normal hull $L_3$ over
$K$ because also $L_3(x)^h$ will contain these elements.

From now on we assume that $L|K$ is a finite tame Galois extension and
that $F^h.L=L(x)^h$ for some $x\in F^h.L$. In addition, we assume that
$\appr(x,L)$ is transcendental.

We show that hypothesis (\ref{,sit1}) holds with $K$ replaced by $L$.
First, since $(F.L|L,v)$ is an immediate function field, so is
$(L(x)|L,v)$. Second, $\appr(x,L)$ is transcendental by assumption.
Third, we have:

\begin{lemma}                     
The condition $F\not\subset K^c$ implies that $F.L\not\subset L^c$, hence
$x\notin L^c$.
\end{lemma}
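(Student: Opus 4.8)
The plan is to exhibit an element of $F.L$ that lies outside $L^c$, by taking an element of $F$ that lies outside $K^c$ and arranging that it also lies outside $L$, so that Lemma~\ref{atup} can be applied to it.

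First I would observe that $F\cap K^c$ and $F\cap L$ are both proper subfields of $F$: the former because $F\not\subset K^c$ by hypothesis, and the latter because $F$ contains an element transcendental over $K$ (since $\trdeg(F|K)=1$) whereas $L|K$ is algebraic. Since a field is never the union of two proper subfields, there exists $b\in F$ with $b\notin K^c$ and $b\notin L$.

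Next comes the main chain of implications. Since $b\notin K^c\supseteq K$, we have $b\in F\setminus K$, so part b) of Lemma~\ref{imme}, applied to the immediate extension $(F|K,v)$, shows that $\appr(b,K)$ is immediate. The extension $(L|K,v)$ is tame, hence a defectless extension of henselian fields. Since $b$ lies in the extension $F.L$ of $L$, satisfies $b\notin L$, and has immediate approximation type over $K$, Lemma~\ref{atup} applies and yields $\dist(b,L)=\dist(b,K)$. But $b\notin K^c$ means $\dist(b,K)<\infty$, whence $\dist(b,L)<\infty$, i.e.\ $b\notin L^c$. As $b\in F\subseteq F.L$, this gives $F.L\not\subset L^c$. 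For the final clause, suppose $x\in L^c$; then $L(x)\subseteq L^c$, and since $L^c$ is henselian (being the completion of a rank-$1$ henselian field), $L(x)^h\subseteq L^c$, so $F.L\subseteq (F.L)^h=F^h.L=L(x)^h\subseteq L^c$, contradicting $F.L\not\subset L^c$. Hence $x\notin L^c$.

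There is no serious obstacle here: the statement is assembled from part b) of Lemma~\ref{imme}, Lemma~\ref{atup} (the equality of distances over a defectless extension), and the elementary remark that $F$ is not exhausted by its two proper subfields $F\cap K^c$ and $F\cap L$. The only point that needs care is that $b$ must be chosen outside $L$, not merely outside $K^c$, because Lemma~\ref{atup} requires $b\notin L$; this is precisely what the ``union of two subfields'' observation secures.
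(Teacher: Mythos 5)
Your proposal is correct and takes essentially the same route as the paper: pick an element of $F$ outside $K^c$, invoke Lemma~\ref{atup} (valid since a tame extension is defectless) to transfer finiteness of the distance up to $L$, and then rule out $x\in L^c$ by showing it would force $F.L\subseteq L^c$. The one place you go beyond the paper is that you explicitly secure $b\notin L$ via the ``a field is not the union of two proper subfields'' argument; the paper silently uses that its chosen $z\in F\setminus K^c$ already lies outside $L$ (which follows because $K(z)|K$ would otherwise be simultaneously immediate and a subextension of the defectless extension $L|K$, hence trivial). Your extra step is harmless and arguably clarifying, and your use of henselianity of $L^c$ in place of the paper's density argument for the final clause is an equally valid minor variant.
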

\begin{proof}
Since $F\not\subset K^c$, there exists some $z\in F$ with $z\notin K^c$.
By assumption, $(L|K,v)$ is a tame extension, and as remarked in
Section~\ref{sectprel}, is therefore defectless. Hence by
Lemma~\ref{atup}, $\dist(z,L)=\dist(z,K)<\infty$. Consequently,
$F.L\not\subset L^c$, as asserted.

%

Furthermore, $x\in L^c$ would imply that $L(x)\subset L^c$; since the
rank of $(K,v)$ is 1 by assumption, the same is true for $(L(x),v)$ and
$L(x)$ is thus dense in $L(x)^h$, so we would get that $F.L\subset F^h.L=
L(x)^h\subset L^c$, a contradiction.
\end{proof}

\begin{lemma}                     \label{,yhr}
If there exists an element $y\in F^h$ such that $L(y)^h=L(x)^h$, then
$F^h=K(y)^h$.
\end{lemma}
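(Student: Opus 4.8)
The plan is to deduce the equality $F^h=K(y)^h$ from a computation of field degrees. Since $y\in F^h$ and $F^h$ is henselian, we have $K(y)^h\subseteq F^h$ at once, so it suffices to prove $[F^h:K(y)^h]=1$. First note that $K(y)|K$ is immediate, being a subextension of the immediate extension $F^h|K$ (here $(F|K,v)$ is immediate by hypothesis and the henselization is an immediate extension), and hence $K(y)^h|K$ is immediate as well. I would then apply Lemma~\ref{idllind} inside the common overfield $F^h.L$ twice: once to the immediate subextension $F^h|K$ and the tame --- hence defectless --- algebraic subextension $L|K$, and once to $K(y)^h|K$ and $L|K$. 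This gives that $F^h|K$ and $K(y)^h|K$ are each linearly disjoint from $L|K$, so, since $[L:K]<\infty$,
\[
[F^h.L:F^h]\;=\;[L:K]\;=\;[K(y)^h.L:K(y)^h].
\]

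Next I would identify $L(y)^h$ with $K(y)^h.L$. On one hand, $K(y)^h.L$ is a finite extension of the henselian field $K(y)^h$, hence henselian, and it contains $L(y)$ and is algebraic over it, so $L(y)^h\subseteq K(y)^h.L$. On the other hand $L(y)^h$ is henselian and, since $L(y)|K(y)$ is finite, algebraic over $K(y)$, so it contains $K(y)^h$; it also contains $L$, whence $K(y)^h.L\subseteq L(y)^h$. Thus $L(y)^h=K(y)^h.L$, and combining with the display above, $[L(y)^h:K(y)^h]=[L:K]$.

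Finally, by the hypothesis of the lemma $L(y)^h=L(x)^h$, and by the standing assumption of this section $L(x)^h=F^h.L$; hence $F^h.L=L(y)^h$. Applying the tower law to $K(y)^h\subseteq F^h\subseteq F^h.L$ then yields
\[
[L:K]\;=\;[L(y)^h:K(y)^h]\;=\;[F^h.L:K(y)^h]\;=\;[F^h.L:F^h]\cdot[F^h:K(y)^h]\;=\;[L:K]\cdot[F^h:K(y)^h],
\]
so that $[F^h:K(y)^h]=1$ as $[L:K]$ is finite, i.e.\ $F^h=K(y)^h$. I do not expect a serious obstacle here; the argument is essentially bookkeeping on top of Lemma~\ref{idllind} and elementary properties of henselizations. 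The only points requiring genuine care are the identification $L(y)^h=K(y)^h.L$ (one should check the henselianity of $K(y)^h.L$ and that $L(y)^h$ is algebraic over $K(y)$) and keeping track of which of the extensions in sight are finite, so that multiplicativity of degrees and the equality $[MN:M]=[N:K]$ for $N|K$ finite and linearly disjoint from $M|K$ may legitimately be invoked.
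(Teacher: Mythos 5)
Your proof is correct and follows essentially the same route as the paper's: both apply Lemma~\ref{idllind} to obtain $[F^h.L:F^h]=[L:K]=[K(y)^h.L:K(y)^h]$, then use the chain $F^h.L=L(x)^h=L(y)^h=K(y)^h.L$ together with the inclusion $K(y)^h\subseteq F^h$ to conclude. You merely spell out the identification $L(y)^h=K(y)^h.L$ and the final tower-law bookkeeping, which the paper leaves implicit.
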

\begin{proof}
Since $(F^h|K,v)$ and hence also its subextension $(K(y)^h|K,v)$ are
immediate and $(L|K,v)$ is defectless and finite, we obtain from
Lemma~\ref{idllind} that $[F^h.L:F^h]=[L:K]=[K(y)^h.L:K(y)^h]$. On the
other hand, $F^h.L= L(x)^h = L(y)^h = K(y)^h.L$, so $F^h=K(y)^h$ must
hold, because by assumption on $y$, $K(y)^h\subseteq F^h$.
\end{proof}

Since $L|K$ is a finite tame Galois extension, also the extension
$F^h.L|F^h$ is a finite tame Galois extension. As shown in the preceding
proof, it is of degree $n:=[L:K]$. We write
\[
\Gal(F^h.L|F^h)=\{\rho_i\mid 1\leq i\leq n\}\;.
\]
Then $\Gal(L|K)=\{\rho_i\restr L\mid 1\leq i\leq n\}$.

The next lemma will help us to determine the relative approximation
degrees of the conjugates $\rho_i(x)$.

\begin{lemma}                     
Assume that $\rho$ is a valuation preserving automorphism of $L(x)^h$
such that $\rho(L)=L$. Then
\[
L(x)^h = L(\rho x)^h\;.
\]
\end{lemma}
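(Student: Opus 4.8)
The plan is to use the basic fact that a value-preserving isomorphism of valued fields carries the henselization of a subfield onto the henselization of its image, together with the uniqueness of the henselization when it is realized inside a fixed henselian overfield.

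First I would record the trivial inclusion. Since $\rho x\in L(x)^h$ and $L\subseteq L(x)^h$ (using $\rho(L)=L$ and $L\subseteq L(x)^h$), the field $L(\rho x)$ is a subfield of $L(x)^h$. Because $L(x)^h$ is henselian, it contains exactly one copy of the henselization of $L(\rho x)$ --- which we denote by $L(\rho x)^h$ --- and in particular $L(\rho x)^h\subseteq L(x)^h$.

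For the reverse inclusion I would argue via $\rho$ directly. By hypothesis $\rho$ is a value-preserving automorphism of $L(x)^h$, so $\rho(L(x)^h)=L(x)^h$; and $\rho(L)=L$ gives $\rho(L(x))=L(\rho x)$. Now $L(x)^h|L(x)$ is an immediate separable-algebraic extension (the defining property of the henselization), hence so is its image $\rho(L(x)^h)|L(\rho x)=L(x)^h|L(\rho x)$. Thus $L(x)^h$ is a henselian, immediate, separable-algebraic extension of $L(\rho x)$ lying inside the henselian field $L(x)^h$; by the uniqueness just mentioned it must equal $L(\rho x)^h$. Therefore $L(x)^h=\rho(L(x)^h)=L(\rho x)^h$, which is the assertion. (Equivalently, one may apply $\rho^{-1}$: the field $\rho^{-1}(L(\rho x)^h)$ is a henselian subfield of $\rho^{-1}(L(x)^h)=L(x)^h$ containing $\rho^{-1}(L(\rho x))=L(x)$, hence it contains the henselization $L(x)^h$; so $\rho^{-1}(L(\rho x)^h)=L(x)^h$, and applying $\rho$ yields the claim.)

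I do not anticipate a real obstacle here: the statement is essentially the compatibility of henselization with isomorphisms of valued fields. The only point needing care is the bookkeeping --- making explicit that every henselization in sight is taken inside the single fixed henselian field $L(x)^h$, so that it is a well-defined subfield and the universal property of the henselization applies without change; the hypotheses involving $\appr(x,L)$ being transcendental or the rank being $1$ play no role in this particular lemma.
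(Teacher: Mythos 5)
The parenthetical alternative you offer at the end --- forming $\rho^{-1}(L(\rho x)^h)$, noting it is a henselian subfield of $L(x)^h$ containing $L(x)$, hence contains (and so equals) $L(x)^h$, then applying $\rho$ --- is exactly the paper's own proof and is correct. Your primary argument, however, has a gap at the step where you say ``$L(x)^h$ is a henselian, immediate, separable-algebraic extension of $L(\rho x)$ lying inside the henselian field $L(x)^h$; by the uniqueness just mentioned it must equal $L(\rho x)^h$.'' This does not follow: a henselian, immediate, separable-algebraic extension of a valued field can properly contain the henselization. The paper's own Example~\ref{d<degree} provides such a case --- there $L$ is henselian (so $L^h=L$), yet the Artin--Schreier defect extension $L(\vartheta)|L$ is immediate and separable, and $L(\vartheta)\supsetneq L$. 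The uniqueness you invoke (of the embedded copy of the henselization inside a fixed henselian overfield) gives only the inclusion $L(\rho x)^h\subseteq L(x)^h$; the reverse inclusion requires the \emph{minimality} of the henselization, namely that $L(x)^h$ is contained in every henselian subfield of $L(x)^h$ containing $L(x)$. That minimality is precisely what your parenthetical argument uses. So promote the parenthetical to the main line of the proof, and drop the claim that ``immediate, separable-algebraic, henselian'' characterizes the henselization --- it does not.
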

\begin{proof}
Since $\rho x\in \rho(L(x)^h)=L(x)^h$, we have that $L(\rho x)^h
\subseteq L(x)^h$. Further, $L\subseteq \rho^{-1}(L(\rho x)^h) \subseteq
L(x)^h$ and $x\in \rho^{-1}(L(\rho x)^h)$. Thus, $L(x)\subseteq
\rho^{-1} (L(\rho x)^h)$. Since $\rho$ is valuation preserving and
induces an isomorphism from $\rho^{-1}(L(\rho x)^h)$ to the henselian
field $L(\rho x)^h$, also $\rho^{-1}(L(\rho x)^h)$ is henselian; it is
therefore equal to $L(x)^h$. This shows that its image $L(\rho x)^h$
under the automorphism $\rho$ is also equal to $L(x)^h$.
\end{proof}

The following lemma and theorem make essential use of the
valuation independence of Galois groups of tame Galois extensions.
Let Tr denote the trace.
\begin{lemma}                     
There is an element $d\in L$ such that
\[
\bh_K(x:\mbox{\rm Tr}^{}_{F^h.L|F^h}(d\cdot x)) = 1\;.
\]
\end{lemma}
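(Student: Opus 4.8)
The plan is to set $y_i:=\rho_i(x)$ for $1\le i\le n$, so that for every $d\in L$ one has $\mathrm{Tr}_{F^h.L|F^h}(d\cdot x)=\sum_{i=1}^n\rho_i(d\cdot x)=\sum_{i=1}^n\rho_i(d)\,y_i$, and then to realize this trace as one of the linear combinations covered by Lemma~\ref{,acm}; throughout, all relative approximation degrees are taken over $L$, since hypothesis (\ref{,sit1}) holds with $K$ replaced by $L$. First I would note that each $\rho_i$ is a valuation-preserving automorphism of $L(x)^h=F^h.L$ (the valuation being unique on this finite Galois extension of the henselian field $F^h$) with $\rho_i(L)=L$, so the preceding lemma gives $L(y_i)^h=L(x)^h$. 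Since $y_i\in L(x)^h$ is transcendental over $L$, hypothesis (\ref{,sit1}) holds with $y_i$ in place of $y$, and Corollary~\ref{,11b} then yields $\bh_L(x:y_i)=1$ for every $i$; in particular the $y_i$ share the common relative approximation degree $\bh=1$, and by Lemma~\ref{Laty=atf} each $y_i$ lies in $L[x]^c\setminus L^c$.

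Next I would fix, for each $i$, an approximation coefficient $d_i\in L$ of $y_i$ in $x$; these exist and have finite value by Lemma~\ref{exapco}. The crux is to choose a single $d\in L$ so that the coefficients $k_i:=\rho_i(d)$ — which again lie in $L$ because $\rho_i(L)=L$, and this is exactly why $d$ has to be taken in $L$ — satisfy condition (\ref{,ack}) of Lemma~\ref{,acm}, namely $v\sum_{i=1}^n\rho_i(d)\,d_i=\min_{1\le i\le n}v\,\rho_i(d)\,d_i<\infty$. Since $L|K$ is a finite tame Galois extension, its Galois group is valuation independent by Theorem~\ref{+Gtvi}; applying this to the pairwise distinct automorphisms $\rho_1|_L,\dots,\rho_n|_L$, which exhaust $\Gal(L|K)$, and to the elements $d_1,\dots,d_n\in L$, produces a $d\in L$ — and a nonzero one, since the argument behind Theorem~\ref{+Gtvi}, via Artin's theorem on independence of characters, exhibits such a $d\ne 0$ — with $v\sum_i(\rho_i|_L)(d)\,d_i=\min_i v\,(\rho_i|_L)(d)\,d_i$. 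As $(\rho_i|_L)(d)=\rho_i(d)$ and $v\rho_i(d)=vd$ for $d\in L$ (recall $K$ is henselian), this common value equals $vd+\min_i v d_i<\infty$, which is precisely (\ref{,ack}).

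Finally, Lemma~\ref{,acm}, applied to the $y_i$, their common approximation degree $\bh=1$, the approximation coefficients $d_i$, and the coefficients $k_i=\rho_i(d)$, yields $\bh_L\bigl(x:\sum_{i=1}^n\rho_i(d)\,y_i\bigr)=\bh=1$. Since $\sum_i\rho_i(d)\,y_i=\sum_i\rho_i(d\cdot x)=\mathrm{Tr}_{F^h.L|F^h}(d\cdot x)$, this is the asserted equality. The only genuine obstacle is the selection of $d$: for an arbitrary $d$ nothing rules out cancellation in the sum $\sum\rho_i(d)\,d_i$, and it is precisely the tameness of $L|K$ — through valuation independence of its Galois group — that guarantees a $d$ for which no cancellation occurs; once that $d$ is in hand, substituting into Lemma~\ref{,acm} is routine.
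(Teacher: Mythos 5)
Your proof is correct and follows essentially the same route as the paper's: choose approximation coefficients $d_i$ of $\rho_i(x)$ via Lemma~\ref{exapco}, use the valuation independence of $\Gal(L|K)$ (Theorem~\ref{+Gtvi}) to produce $d\in L$ satisfying (\ref{,ack}), identify $\bh_L(x:\rho_i(x))=1$ by Corollary~\ref{,11b} and the preceding lemma, and conclude by Lemma~\ref{,acm}. You are somewhat more careful than the paper in tracking the base field, consistently writing $\bh_L$ rather than $\bh_K$ (the paper's $\bh_K$ here is evidently a slip, since the subsequent application in Theorem~\ref{,pdp} invokes Corollary~\ref{,11b} over $L$ to get $L(y)^h=L(x)^h$) and in verifying the hypotheses of Lemma~\ref{,acm} explicitly.
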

\begin{proof}
From the preceding lemma it follows that every $\rho_i(x)$ is
transcendental over $L$ and hence over $K$. Hence by Lemma~\ref{exapco}
we can choose approximation coefficients $d_i$ of $\rho_i(x)$ in $x$
over $K$ for $1\leq i\leq n$. By Theorem~\ref{+Gtvi}, we have that
$\Gal(L|K)$ is valuation independent. This means we can choose
an element $d\in L$ such that (\ref{,viG}) holds with $\sigma_i=
\rho_i\restr L\,$. Then for $k_i:= \sigma_i(d)=\rho_i(d)$, the
hypothesis (\ref{,ack}) of Lemma~\ref{,acm} holds. In view of the
previous lemma and Corollary~\ref{,11b} we have that
$\bh_K(x:\rho_i(x))=1$. From Lemma~\ref{,acm} we can now infer that
\begin{eqnarray*}
\bh_K\left(x:\mbox{\rm Tr}^{}_{F^h.L|F^h}(d\cdot x)\right) &=&
\bh_K\left(x:\sum_{i} \rho_i(d\cdot x)\right)\\
&=& \bh_K\left(x:\sum_{i} \rho_i(d)\cdot\rho_i(x)\right) = 1\;.
\end{eqnarray*}
\end{proof}

Now we are able to answer our question:
\begin{theorem}                  \label{,pdp}
Let $(K,v)$ be an algebraically maximal field of rank 1, and let $(F,v)$
be an immediate function field of transcendence degree 1 over $(K,v)$,
with $F\not\subset K^c$. If $F^h.L$ is a henselian rational function
field over $L$ for some tame extension $(L|K,v)$, then $F^h$ is a
henselian rational function field over $K$.
\end{theorem}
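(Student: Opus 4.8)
The plan is simply to put together the results developed in this section; the argument itself is short. By the reductions carried out above I may assume that $L|K$ is a finite tame Galois extension and that $F^h.L=(F.L)^h=L(x)^h$ for some $x\in F^h.L$, where $x$ is transcendental over $L$ because $F^h.L|L$ has transcendence degree $1$. The first step is to verify that hypothesis (\ref{,sit1}) holds with $L$ in place of $K$: the field $(L,v)$ has rank $1$; the extension $(L(x)|L,v)$ is immediate, being a subextension of the immediate extension $(F^h.L|L,v)$; and $x\notin L^c$ by the lemma above that deduces $F.L\not\subset L^c$ from $F\not\subset K^c$. It remains to see that $\appr(x,L)$ is transcendental. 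Since $(L|K,v)$ is a finite tame extension it is defectless, and therefore $(L,v)$ is again algebraically maximal (a finite defectless extension of an algebraically maximal field is algebraically maximal); as $(L(x)|L,v)$ is immediate and nontrivial, the remark following (\ref{,sit1}) then shows that $\appr(x,L)$ is transcendental. Thus (\ref{,sit1}) holds with $L$ in place of $K$ (and the same $x$), and all the lemmas of this section become available over $L$.

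Next I would apply the last lemma above to obtain an element $d\in L$ such that, putting $y:=\mbox{\rm Tr}^{}_{F^h.L|F^h}(d\cdot x)$, one has $\bh_L(x:y)=1$. Here $y\in F^h$, since it is the trace over $F^h$ of an element of the finite Galois extension $F^h.L|F^h$; moreover $y$ is transcendental over $L$, as otherwise $\bh_L(x:y)$ would be undefined. Applying Corollary~\ref{,11b} with $L$ in place of $K$, the equality $\bh_L(x:y)=1$ gives $L(y)^h=L(x)^h=F^h.L$. Since $y\in F^h$ and $L(y)^h=L(x)^h$, Lemma~\ref{,yhr} now yields $F^h=K(y)^h$, which is exactly the assertion that $F^h$ is a henselian rational function field over $K$.

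I do not expect a genuine obstacle in this closing step: the substantive construction --- producing $d$ from the valuation independence of $\Gal(L|K)$ (Theorem~\ref{+Gtvi}) together with the additivity of relative approximation degrees under the hypotheses of Lemma~\ref{,acm} --- has already been carried out in the lemma preceding the theorem, and what remains is bookkeeping. The one point that needs some care is the transfer of hypothesis (\ref{,sit1}) from $K$ to $L$, specifically the transcendence of $\appr(x,L)$; this is exactly where the assumption that $(K,v)$ is algebraically maximal, rather than merely of rank $1$, is used.
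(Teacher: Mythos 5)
Your proof is correct and follows essentially the same route as the paper: reduce to $L|K$ finite tame Galois, verify the standing hypothesis~(\ref{,sit1}) over $L$, invoke the trace construction from the preceding lemma to get $\bh_L(x:y)=1$ for $y:=\mbox{\rm Tr}^{}_{F^h.L|F^h}(d\cdot x)\in F^h$, then apply Corollary~\ref{,11b} over $L$ and Lemma~\ref{,yhr}. Where you improve on the paper's exposition is in explicitly justifying that $\appr(x,L)$ is transcendental: the text preceding the theorem simply \emph{assumes} this, leaving it to the reader to notice that the hypothesis ``$(K,v)$ algebraically maximal'' is what supplies it. Your argument for this step --- that $(L,v)$, being a finite tame (hence defectless) extension of the algebraically maximal $(K,v)$, is again algebraically maximal, so that the remark after (\ref{,sit1}) applies to the nontrivial immediate extension $(L(x)|L,v)$ --- is exactly the intended filling; the preservation of algebraic maximality under finite defectless extensions is a known fact (it can be seen, for instance, by combining Lemma~\ref{atup} with Corollary~\ref{nst} and Theorem~\ref{charam} to descend an immediate algebraic approximation type from $L$ to $K$, or cited from \cite{FVK}), and it would be worth giving the reference. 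You also quietly replace the subscript $K$ by $L$ in $\bh_L(x:y)$; this is the correct reading, consistent with the paper's own statement that (\ref{,sit1}) is being applied with $K$ replaced by $L$, and with the fact that Corollary~\ref{,11b} is subsequently applied over $L$.
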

\begin{proof}
As shown in the beginning of this section, we may assume that $L|K$ is
finite and Galois. Now the foregoing lemma shows that there is some
$d\in L$ such that for $y:=\mbox{\rm Tr}^{}_{F^h.L|F^h} (d\cdot x)\in
F^h$ we have $\bh_K(x:y)=1$. By virtue of Corollary~\ref{,11b},
$L(y)^h=L(x)^h$. From Lemma~\ref{,yhr}, we can now infer that $F^h$ is
henselian rational over $K$, as asserted.
\end{proof}

\end{document}